\newlist{myitemize}{itemize}{5}
\setlist[myitemize,1]{label=a}
\setlist[myitemize,2]{label=b}
\setlist[myitemize,3]{label=c}
\setlist[myitemize,4]{label=d}
\setlist[myitemize,5]{label=e}
\newtheorem{Theorem}{Theorem}%[section]
\newtheorem{Lemma}[Theorem]{Lemma}
\newtheorem{Proposition}[Theorem]{Proposition}
\newtheorem{Definition}[Theorem]{Definition}
\newtheorem{Corollary}[Theorem]{Corollary}
\newtheorem{Example}[Theorem]{Example}
\newtheorem{Remark}[Theorem]{Remark}
\newcommand{\beq}{\begin{equation}}
\newcommand{\eeq}{\end{equation}}
\def\emm#1,{{\em #1}}
\newcommand{\dba}[1]{\mathop{\overrightarrow{\overleftarrow#1}}}
\def \Z {\mathbb Z}
\def \N {\mathbb N}  
\def \R {\mathbb R}  
\def \C {\mathcal C}  
\def \T {\mathcal T}
\def \Sb {\mathcal B}  
\def \Sf {\mathcal F}  
\def \Ss {\mathcal S}  
\def \origin {\mathcal O}
\def \D {\Delta}
\def \bsa {\overline{s_1}}
\def \bsb {\overline{s_2}}
\def \bsc {\overline{s_3}}
\def \lra {\longleftrightarrow}
\def \Mo#1#2#3{m_{#2}(#3)}
\def \Mop#1#2#3{m'_{#2}(#3)}
\newcommand{\f}{g}
\def \D {\Delta}
\newcounter{nalg} % defines algorithm counter for chapter-level
\def\section{\@startsection{section}{1}%
 \z@{.7\linespacing\@plus\linespacing}{.5\linespacing}%
 {\normalfont\bfseries\scshape\centering}}
\def\subsection{\@startsection{subsection}{2}%
  \z@{.5\linespacing\@plus\linespacing}{.5\linespacing}%
  {\normalfont\bfseries\scshape}}
\def\subsubsection{\@startsection{subsubsection}{3}%
 \z@{.5\linespacing\@plus\linespacing}{-.5em}%{.5\linespacing}%
 % {\normalfont\bfseries\itshape}}
 {\normalfont\bfseries}}
\begin{document}
%--------------------------------------
% Front Matter
%--------------------------------------
\title[Bijections between triangular walks and Motzkin paths]
{Bijections between 
%Mortimer-Prellberg triangular 
walks
inside a triangular domain
and 
Motzkin paths of bounded amplitude}

\author{Julien Courtiel}
\address{ \vspace*{-.5cm} \small Normandie University, UNICAEN, ENSICAEN, CNRS, GREYC}
\author{Andrew Elvey Price}
\address{ \vspace*{-.5cm}  Universit\'e de Bordeaux, LaBRI, Universit\'e de Tours, IDP }
\author{Ir\`ene Marcovici}
\address{ \vspace*{-.5cm}  \small Universit\'e de Lorraine, CNRS, Inria, IECL, F-54000 Nancy, France}

\begin{abstract}
This paper solves an open question of Mortimer and Prellberg asking for an explicit bijection between two families of walks. The first family is formed by what we name \textit{triangular walks}, which are two-dimensional walks moving in six directions ($0^{\circ}$, $60^{\circ}$, $120^{\circ}$, $180^{\circ}$, $240^{\circ}$, $300^{\circ}$) and confined within a triangle. The other family is comprised of two-colored Motzkin paths with bounded height, in which the horizontal steps may be forbidden at maximal height.

We provide several new bijections. The first one is derived from a simple inductive proof, taking advantage of a $2^n$-to-one function from generic triangular walks to triangular walks only using directions $0^{\circ}$, $120^{\circ}$, $240^{\circ}$. The second is based on an extension of Mortimer and Prellberg's results to triangular walks starting not only at a corner of the triangle, but at any point inside it. It has a linear-time complexity and is in fact adjustable: by changing some set of parameters called a \textit{scaffolding}, we obtain a wide range of different bijections. 

Finally, we extend our results to higher dimensions. In particular, by adapting the previous proofs, we discover an unexpected bijection between three-dimensional walks in a pyramid and two-dimensional simple walks confined in a bounded domain shaped like a waffle. 
\end{abstract}

\maketitle

\subsection*{Thanks}
\thanks{JC was supported by the ``\textit{CNRS projet JCJC}" named ASTEC. AEP was supported by the European Research Council (ERC) in the European Union’s Horizon 2020 research and innovation programme, under the Grant Agreement No.~759702.  The authors want also to thank the sponsors of the conference ALEA Young (ANR-MOST MetAConC, Normastic, Université de Caen Normandie) without which this collaboration would never have been born.}

\section{Introduction}

In part due to the ubiquity of random walks in probability theory, lattice walks are extensively studied in enumerative combinatorics~\cite{Mohanty,Humphreys,BoMi10}. In this context, it is frequently discovered that two families of walks, which seem to be very different, are in fact counted by the same numbers. The initial proof is often not combinatorial, %\irene{However, the proof are often not combinatorial,}
and finding an explicit bijection between such families can prove to be a difficult task (see for example \cite{Eliz15,basket}). 

In this spirit, this paper answers a $5$ year old open question from Mortimer and Prellberg~\cite[Section 4.3]{MortimerPrellberg}. By solving a functional equation satisfied by the generating function, the two authors realized that the number of walks in a triangular domain starting from a corner of this domain is equal to the number of Motkzin paths of bounded height -- we will give precise definitions of these families in the following subsections.  
Their proof was purely analytic and,  %appealed for\irene{raised the issue of}
consequently, it raised the issue of finding an explanatory bijection. This gave rise to an open question, which became rather famous in the community, since Prellberg, one of the authors of \cite{MortimerPrellberg}, regularly asked for a bijection in open problems sessions during combinatorics conferences. The current paper solves this question, in several manners. 

In the rest of this section, we introduce the notions of triangular paths, Motzkin paths and Motzkin meanders, which will be our objects of study, and we present more formally Mortimer and Prellberg's problem. Then, in the last subsection, we give a detailed outline of the present paper.

\subsection{Triangular paths}

Let $(e_1,e_2,e_3)$ denote the standard basis of $\R^3$. For some $L\in\N$, we define the subset $\T_L$ of $\N^3$ as the triangular section of side length $L$ of the integer lattice:
$$\T_L=\{x_1\,e_1+x_2\,e_2+x_3\,e_3 : x_1, x_2, x_3\in\N, x_1+x_2+x_3=L\}.$$
An example of such lattice is shown by Figure~\ref{figure:T3} (left).

We also introduce the notation
$$s_1=e_1-e_3, \quad s_2=e_2-e_1, \quad s_3=e_3-e_2,$$
and for $i\in\{1,2,3\}$, we set $\overline{s_i}=-s_i.$ 
We will interpret the vectors $s_i$ as \emph{forward steps} and the vectors $\overline{s_i}$ as \emph{backward} steps. 
We denote by $\Sf=\{s_1,s_2,s_3\}$ and $\Sb=\{\bsa,\bsb,\bsc\}$ the set of forward and backward steps, respectively.

\begin{figure}\begin{minipage}{0.4\textwidth}
\begin{center}
\includegraphics[scale=1.2]{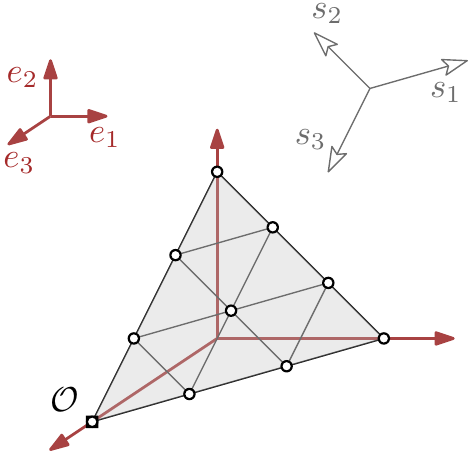}
\end{center}
\end{minipage}
\hspace{2cm}
\begin{minipage}{0.4\textwidth}
\begin{center}
\includegraphics[scale=1.2]{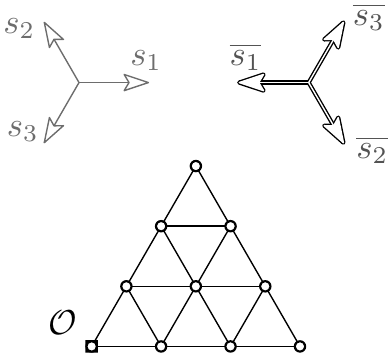}
\end{center}
\end{minipage}

\caption{\textit{Left.} 
The triangular lattice $\T_3$. \textit{Right.} The planar representation of the same lattice, with  $\Sf$ and $\Sb$.
}
\label{figure:T3}
\end{figure}

For convenience, we define the indices modulo $3$, thus $s_0 = s_3$ and $s_4 = s_1$.

The triangular lattice $\T_L$ can be naturally drawn in the plane, as an equilateral triangle of side length $L$, subdivided in smaller equilateral triangles of side length $1$ (see Figure~\ref{figure:T3} right). We will use this planar representation for the remainder of the document.

We define $\origin$ as the bottom left corner of $\T_L$, that is to say $\origin = L e_3$. In some sense, it denotes an origin for the lattice $\T_L$.

\begin{Definition}[Forward paths, triangular paths]
Given an integer $L\in\N$, and a point $z\in \T_L$, a \emph{forward (triangular) path} of length $n$ starting from $z$ is a sequence $(\sigma_1,\ldots,\sigma_n)\in \Sf^{n}$ satisfying
\[\forall k\in\{0,\ldots,n\}, \quad z+\sum_{i=1}^k \sigma_i \in \T_L.\]
A \emph{(generic) (triangular) path} of length $n$ starting from $z$ is a sequence $(\omega_1,\ldots,\omega_n) \in \left(\Sf \cup \Sb\right)^n$ satisfying
\[\forall k\in\{0,\ldots,n\}, \quad z+\sum_{i=1}^k \omega_i \in \T_L.\] 
\end{Definition}

\begin{figure}
\begin{center}
\includegraphics[width = 0.9 \textwidth]{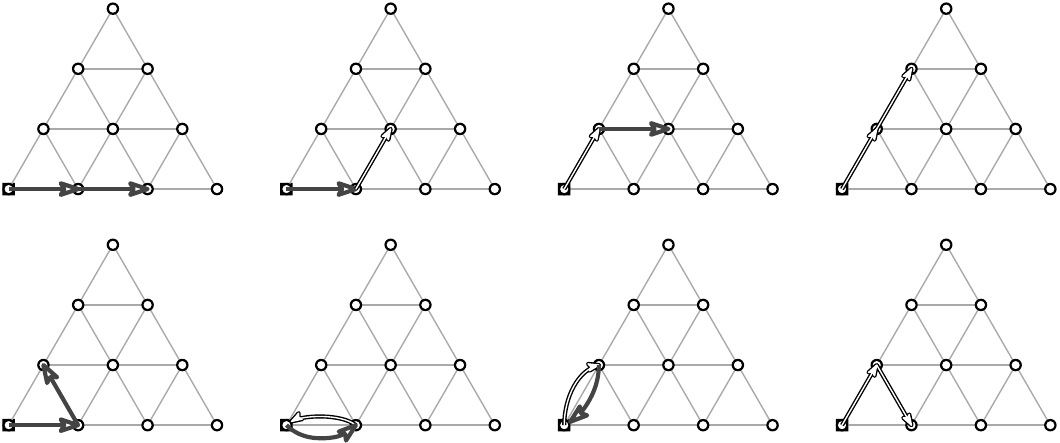}
\end{center}
\caption{All triangular paths of $\T_3$ with length $2$ starting at $\origin$.}
\label{figure:triangular}
\end{figure}

If $L \geq 2$, there are $2$ forward paths of length $2$ and $8$ generic paths of length $2$ starting from $\origin$, as shown by Figure~\ref{figure:triangular}.

For those who are familiar with the enumeration of walks in the quarter of plane, forward paths can be seen as a subfamily of %the so-called 
tandem walks~\cite[Section 4.7]{yellowBook}. \textit{Tandem walks} are walks on $\N^2$ using steps $(1,0)$, $(-1,1)$, $(0,-1)$ (East, North-West, South steps). %They are so named because in queuing theory they model the behavior of two queues in series.
Their name comes from the fact that in queuing theory, they model the behavior of two queues in series.

To be precise, forward paths of $\T_L$ are
% tandems walks such that the sum of the two coordinates never exceeds $L$. Equivalently, they are 
equivalent to tandem walks confined in the part of the positive quarter plane below the anti-diagonal $x+y = L$. In terms of queues, forward paths can be represented by two queues in series where the total number of jobs (or customers) in both queues is never greater than $L$.

\begin{figure}
\begin{center}
\includegraphics[scale = 1.3]{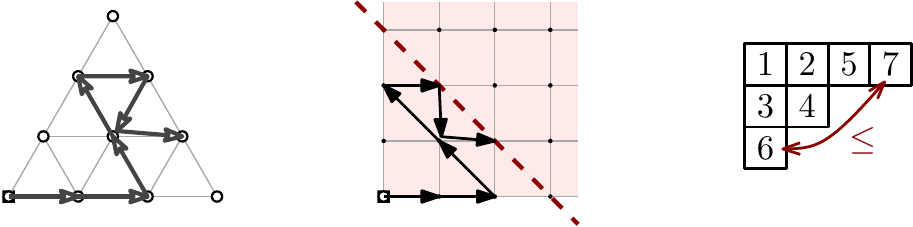}
\end{center}
\caption{Equivalent definitions of the same object: forward paths of $\T_3$ (left); tandem walks in the positive quarter of plane and below the antidiagonal $x + y = 3$ (middle); standard Young tableaux with three rows or less such that the label of the $i$th cell of the bottom row must be less than the label of $(i+3)$th cell of the top row (right).  }
\label{figure:tandem}
\end{figure}

Since tandem walks are also described by \textit{standard Young tableaux}~\cite{young} with three rows or less, forward paths on $\T_L$ form a particular subfamily of standard Young tableaux: they must have $3$ rows or less, and for every $k > L$, if there is a $k$th cell in the top row of the tableau, then its label must be greater than the label of the $(k-L)$th cell of the third row (which must exist). The three equivalent definitions of forward paths are illustrated by Figure~\ref{figure:tandem}.

As for generic triangular paths, they are naturally encoded by \textit{double-tandem walks}, which are walks on $\N^2$ using steps $(1,0)$, $(-1,1)$, $(0,-1)$, $(-1,0)$,  $(1-,1)$, $(0,1)$ (we add to the base step set of the tandem walks the opposite steps).

\subsection{Motzkin paths and meanders}
\label{ss:motzkin}

A \textit{Motzkin path} is a path using up, horizontal and down steps, respectively denoted $\nearrow$, $\rightarrow$ and $\searrow$, such that:
\begin{itemize}
\item it starts at height $0$;
\item it remains at height $\geq 0$ (i.e. inside any prefix of a Motzkin path, the number of $\nearrow$ steps is greater or equal to the number of $\searrow$ steps);
\item it ends at height $0$ (i.e. in total, there are as many $\nearrow$ steps as $\searrow$ steps).
\end{itemize}

The following definition refines the notion of maximum height for a Motzkin path.

\begin{Definition}[Amplitude]
Let $M$ be a Motzkin path and $H$ its maximum height (i.e the maximal difference between the number of $\nearrow$ steps and the number of $\searrow$ steps in a prefix of $M$).

The \emph{amplitude} of $M$ is defined as
\[ \left\{ \begin{array}{cl}
 2H +1 &  \textrm{if a horizontal step }\rightarrow\textrm{ is performed at height }H,
\\ 2 H & \textrm{otherwise.}
\end{array}  \right.\]
\end{Definition}

\begin{figure}
\begin{center}
\includegraphics[width = 0.95 \textwidth]{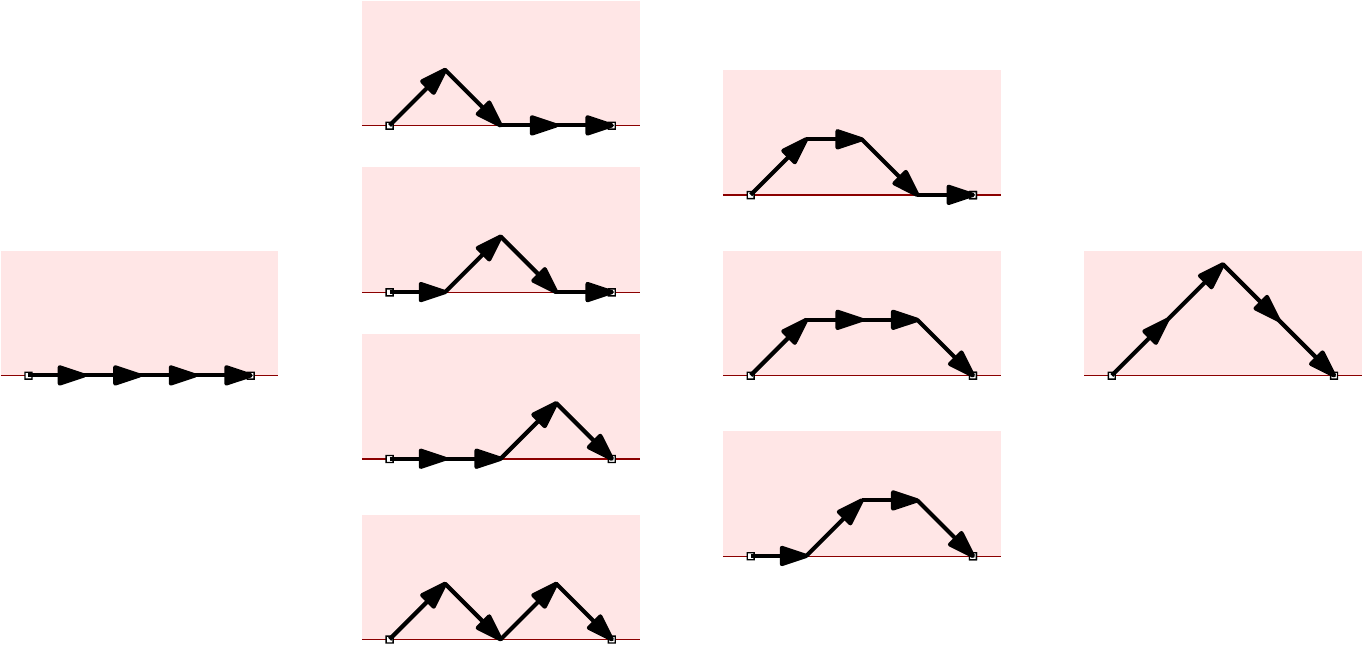}
\end{center}
\caption{Motzkin paths of length $4$ sorted with respect to their amplitude (from $1$ to $4$)}
\label{figure:motzkin}
\end{figure}

For example, all the Motzkin paths of length $4$ are listed by Figure~\ref{figure:motzkin}: there is one such path with amplitude $1$, four with amplitude $2$, three with amplitude $3$ and one with amplitude $4$.

A \textit{Motzkin meander} is a suffix\footnote{Usually a meander is defined as a prefix, but up to a vertical symmetry, it is equivalent.} of a Motzkin path. A Motzkin meander can thus start at any height, but must end at height $0$.

\subsection{Mortimer and Prellberg's open question}

We now state Mortimer and Prellberg's enumerative result (reformulated in terms of amplitude), for which we are going to give explanatory bijections. 

\begin{Theorem}[Corollary 4 \cite{MortimerPrellberg}]
Given any $L \geq 0$, there are as many triangular paths in $\T_L$ starting at $\origin$ with $p$ forward steps and $q$ backward steps as bicolored Motzkin paths of length $p+q$ with an amplitude less than or equal to $L$ where $p$ steps are colored in black and $q$ are colored in white.
\label{theo:mortimerprellberg}
\end{Theorem}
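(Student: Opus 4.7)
The plan is to construct the bijection in two stages, following the strategy suggested by the abstract: first reduce generic triangular walks to forward-only walks, then biject the latter with Motzkin paths of bounded amplitude. Combined with a binary sequence recording which steps of the generic walk were ``backward'', this will yield the coloring of the Motzkin path and preserve the statistic $(p,q)$.

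\emph{Stage 1: generic walks to forward walks.} I would construct a $2^n$-to-$1$ map $\pi$ from length-$n$ generic walks in $\T_L$ starting at $\origin$ to length-$n$ forward walks in $\T_L$ starting at $\origin$. The fiber over each forward walk should be canonically indexed by $\{0,1\}^n$, with the $i$-th bit recording whether the $i$-th step of the original walk was forward or backward. A first guess is to pair each forward direction $s_i$ with an adjacent backward one such as $\overline{s_{i-1}}$, since both decrement the same coordinate $x_{i-1}$ by $1$, and hence have the same domain of validity from a position where $x_{i-1} \geq 1$. The pairing may need to adapt to the current position so that every one of the $2^n$ preimages remains confined in $\T_L$.

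\emph{Stage 2: forward walks to Motzkin paths.} I would then construct a step-by-step bijection between forward walks in $\T_L$ of length $n$ starting at $\origin$ and uncolored Motzkin paths of length $n$ of amplitude $\leq L$. A natural first attempt is to send $s_1 \mapsto \nearrow$, $s_3 \mapsto \rightarrow$ and $s_2 \mapsto \searrow$, with Motzkin height equal to the coordinate $x_1$ of the current position, which starts at $0$ since $\origin = L e_3$. The triangular constraints $x_2 \geq 0$ and $x_3 \geq 0$ should translate into the amplitude being at most $L$, while the fine distinction between amplitudes $2H$ and $2H+1$ at maximum height $H$ should correspond to whether the forward walk touches the opposite edge of $\T_L$ at an extremal moment. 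Composing Stages~1 and~2 with the obvious bijection between $\{0,1\}^n$ and $2$-colorings of $n$ steps then yields the theorem.

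\emph{Main obstacle.} The core difficulty lies in Stage 1: designing $\pi$ so that \emph{every} one of the $2^n$ preimages of each forward walk remains in the triangle. A purely local pairing is unlikely to suffice, because different choices of bits lead to different intermediate positions, some of which may escape $\T_L$. I would therefore look for an adaptive pairing driven by, for instance, the distances to each edge of the triangle, and verify the $2^n$-to-$1$ property by induction on the length. The verification of Stage 2, including the precise amplitude bookkeeping, should then reduce to a careful case analysis on how the forward walk interacts with each of the three sides of $\T_L$, which I expect to be routine once the right height function is chosen.
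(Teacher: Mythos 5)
Your overall architecture (reduce generic walks to forward walks via a $2^n$-to-one map, biject forward walks with Motzkin paths, and read the colors off the forward/backward pattern) is exactly how the paper assembles Theorem~\ref{theo:mortimerprellberg} from Theorem~\ref{theo:directions} and Corollary~\ref{cor:forward-motzkin}. But as written, both stages have genuine gaps, and the second one is not merely unfinished but based on a map that cannot work. A letterwise substitution $s_1\mapsto\nearrow$, $s_3\mapsto\rightarrow$, $s_2\mapsto\searrow$ with height $x_1$ fails already at length $2$: the forward path $(s_1,s_1)$ is sent to $\nearrow\nearrow$, which is not a Motzkin path, since forward paths have a free endpoint while Motzkin paths must return to height $0$. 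This mismatch of endpoint constraints is precisely the content of Mortimer and Prellberg's open question and cannot be repaired by ``careful case analysis'' of a height function: no length- and step-position-preserving local recoding exists, because the image of any such map on the $s_1$-only path leaves the Motzkin family. The paper's route is necessarily more involved: it first proves the identity $f_n(\origin+\ell s_1)=\sum_{i=0}^{\ell} \Mo{H}{n}{i}$ (Proposition~\ref{prop:motzkin_inductive}) by an induction that itself relies on the forward/backward symmetry of Theorem~\ref{theo:first}, and then turns this into a bijection either recursively (the map $\Omega_{n,k}$ of Theorem~\ref{theo:Omega}, which is essentially an inclusion--exclusion unwinding) or via the scaffolding machinery of Section~\ref{s:scaffolding}, where the walk on the Motzkin side is steered not by a single height but by a cell in the profile of the current triangular position.

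Stage 1 is closer to the mark: your pairing guess $s_i\leftrightarrow\overline{s_{i-1}}$ is exactly the paper's last-step flip, and your remark that a purely local, position-independent pairing is insufficient is correct. However, you stop at ``look for an adaptive pairing and verify by induction,'' which is where the real work lies. The paper resolves it by combining swap flips (including the non-obvious rule $(s_k,\overline{s_k})\lra(\overline{s_{k-1}},s_{k-1})$) with last-step flips, and the delicate point is not existence of some sequence of flips but well-definedness: that any two flip sequences reaching the same direction vector give the same path. That uniqueness is proved via the tiling of the folded path (Proposition~\ref{prop:tiling}), an ingredient absent from your sketch. So the proposal correctly identifies the shape of the argument but supplies neither of the two constructions it needs, and its one concrete construction (Stage 2) is refuted by a two-step example.
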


Setting $p=n$ and $q=0$, we obtain the following corollary about forward paths.

\begin{Corollary}
Given any $L \geq 0$, there are as many forward paths in $\T_L$ of length $n$ starting at $\origin$ as Motzkin paths of length $n$ with an amplitude less than or equal to $L$.
\label{cor:forward-motzkin}
\end{Corollary}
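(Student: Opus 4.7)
The plan is to deduce this statement directly from Theorem~\ref{theo:mortimerprellberg} by specializing the parameters to $p = n$ and $q = 0$. First I would note that the condition $q = 0$ on the triangular side forces every step to lie in $\Sf$, so the triangular paths counted by the theorem in this case are precisely the forward paths of length $n$ starting at $\origin$; conversely, every forward path of length $n$ has $p = n$ and $q = 0$, so the identification is an equality of sets, not merely of cardinalities.

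Next I would analyze the Motzkin side. A bicolored Motzkin path of length $p+q = n$ with $p = n$ black steps and $q = 0$ white steps is simply a Motzkin path of length $n$ equipped with the trivial all-black coloring. Thus these refined bicolored Motzkin paths are in obvious one-to-one correspondence with ordinary Motzkin paths of length $n$, and the amplitude bound $\leq L$ transfers verbatim because the definition of amplitude given in Subsection~\ref{ss:motzkin} does not refer to colors. Substituting both identifications into Theorem~\ref{theo:mortimerprellberg} yields the claimed equality of cardinalities.

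I do not expect a genuine obstacle here, since the statement is by design a direct specialization; the only points worth checking are that (i) the coloring conventions of Theorem~\ref{theo:mortimerprellberg} indeed allow the trivial coloring when one of the color counts is zero, and (ii) the amplitude condition is preserved under forgetting the (trivial) coloring. Both are immediate from the definitions, so the proof reduces to a one-line invocation of the theorem with $p = n$ and $q = 0$.
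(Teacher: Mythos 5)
Your proposal is correct and matches the paper's own derivation: the corollary is obtained exactly by specializing Theorem~\ref{theo:mortimerprellberg} to $p=n$, $q=0$, identifying the all-forward triangular paths with forward paths and the all-black bicolored Motzkin paths with ordinary Motzkin paths. Nothing further is needed.
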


\begin{figure}
\begin{center}
\includegraphics[width=0.9\textwidth]{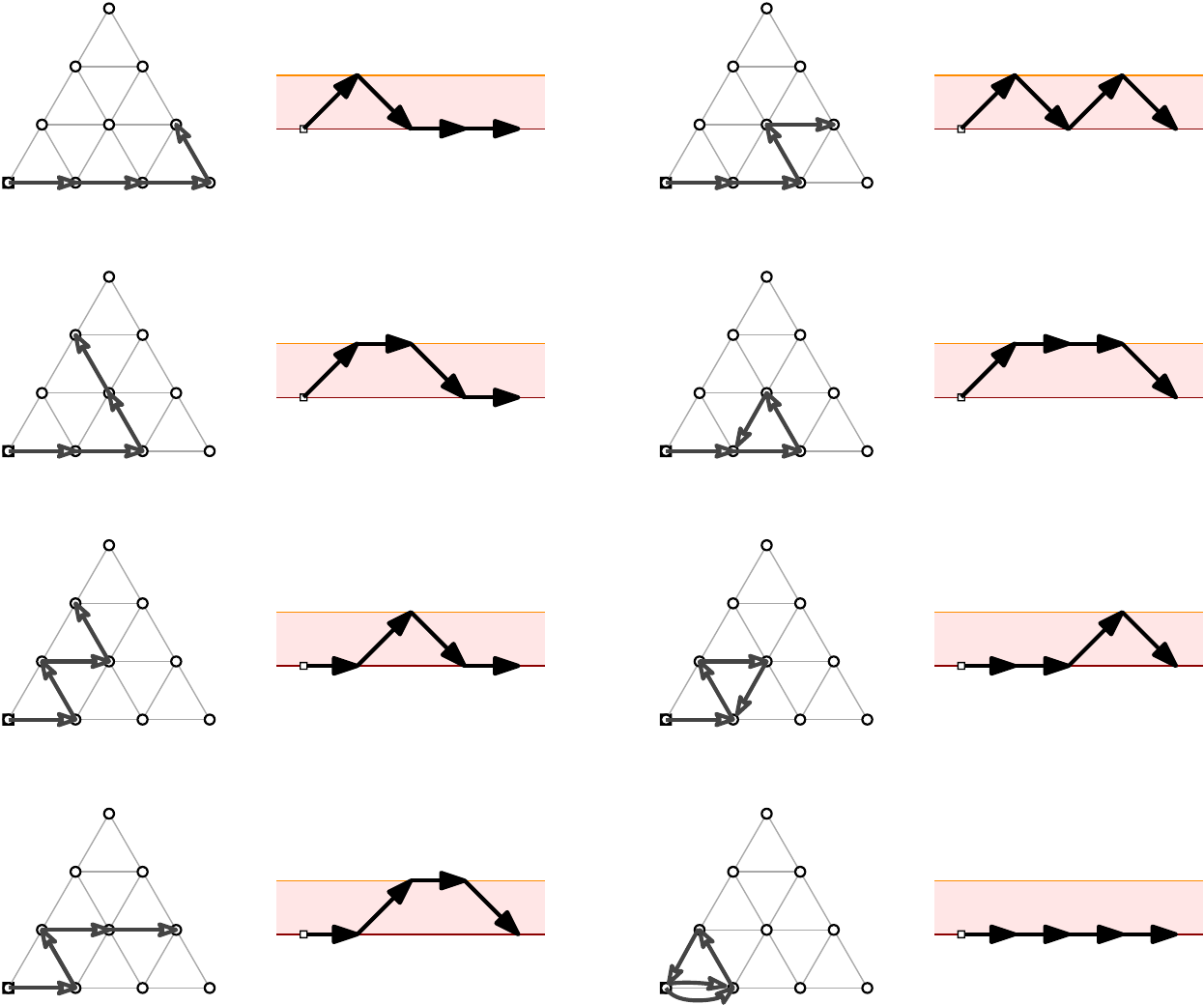}
\end{center}
\caption{Equinumeracy between forward paths of $\T_3$ with length $4$ starting at $\origin$ and Motzkin paths with amplitude bounded by $3$.}
\label{figure:exponential-bijection}
\end{figure}

An illustration of this corollary for $n=4$ is shown by Figure~\ref{figure:exponential-bijection}.

Connections between Motzkin paths and tandem walks (the natural superset of forward paths) are not new. 
Regev~\cite{regev} was the first to notice via an algebraic method that standard Young tableaux with $3$ rows or less and Motzkin paths are counted by the same numbers.
Gouyou-Beauchamps~\cite{gouyouBeauchamps} then found an explanation for this equinumeracy, thanks to the Robinson-Schensted correspondence. Since then, several authors~\cite{eu,eu2,ChyzakYeats,bousquetmelouFusyRaschel} have given new bijections between tandem walks and Motzkin paths, which each have their own ways to be generalized. It should be noted that none of these bijections %can be restricted 
restrict to a bijection between forward paths in $\T_L$ and Motzkin paths with amplitude bounded by $L$.

By comparing Theorem~\ref{theo:mortimerprellberg} and its corollary, one can remark that there is a factor $2^n$ between forward paths in $\T_L$ of length $n$ and generic triangular paths in $\T_L$ of length $n$. This fact was known before Mortimer and Prellberg's article for tandem walks and double-tandem walks (in other words, whenever $L$ is infinite).
Bousquet-Mélou and Mishna~\cite{BoMi10} were the first to notice it and wondered whether there is a combinatorial explanation for this phenomenon. This was solved by Yeats via a convoluted bijection~\cite{yeats2014bijection}. This bijection was subsequently improved by Chyzak and Yeats~\cite{ChyzakYeats} by using the formalism of automata. Again, their bijection does not restrict to the triangular lattice $\T_L$.

\subsection{Outline of the paper}

This paper presents bijections that explain Theorem~\ref{theo:mortimerprellberg}. More precisely, we demonstrate on one hand why the ratio between forward paths and generic paths of length $n$ is $2^n$, and on the other hand, we find several bijections for Corollary~\ref{cor:forward-motzkin}. Combining both results will give different combinatorial %interpretations for
proofs of Theorem~\ref{theo:mortimerprellberg}.

First, Section~\ref{s:symmetry} concentrates around a symmetry property for the triangular paths: the number of paths starting from a point in $\T_L$ with a fixed sequence of forward and backward steps does not depend on 
the sequence of forward and backward steps. This property, stated by Theorem~\ref{theo:directions}, infers the above-mentioned $1$-to-$2^n$ function between forward paths and triangular paths of length $n$. The proof is based on a convergent rewriting system.

Section~\ref{s:expo} provides a simple inductive proof of the equinumeracy between triangular paths in $\T_L$ and Motzkin paths with amplitude bounded by $L$ (Proposition~\ref{prop:motzkin_inductive}). Furthermore,  we manage to tweak this proof into a bijection which explains Corollary~\ref{cor:forward-motzkin} (see Figure~\ref{figure:Omega}). However, this bijection is highly complex in the sense it is based on an inclusion-exclusion argument and can take an exponential time to be computed.

Almost independently from the previous sections, we describe in Section~\ref{s:scaffolding} a method to build numerous bijections between triangular paths and Motzkin paths of bounded amplitude. To do so, we relate the number of triangular paths starting at any $z \in \T_L$ and the numbers of Motzkin meanders of amplitude bounded by $L$ starting at height $i$ (Theorem~\ref{theo:anywhere}). This proves the existence of an object which we name \textit{scaffolding}, which works %similarly as 
in much the same way as a finite-state transducer. This enables us to find several parameterized bijections between forward paths and Motzkin paths (Algorithm~\ref{algo:scaffolding1}), which can be extended into bijections between generic triangular paths and bicolored Motzkin paths (Subsection~\ref{ss:bico}). In Subsection~\ref{ss:trapeziums} we give an explicit scaffolding, with simple, albeit numerous transition rules, which has the additional property that it is independent of the size $L$.

Finally, in Section~\ref{s:generalization} we generalize  our results to higher dimensions. The triangular lattice naturally extends to a simplicial lattice, in which the ratio property between forward paths and generic paths (Theorem~\ref{theo:generic_forward}) still holds. More surprisingly, we find a new bijection specifically in dimension $3$. It matches walks using $4$ steps confined within a pyramid with walks using the $4$ cardinal steps returning to the $x$-axis confined in a domain which is the upper half of a square that have been rotated $45^\circ$ (Theorem~\ref{theo:waffle_to_pyramid}). The second family of walks being easier to count than the first one, we find a  formula for the generating function of the pyramidal walks, which was part of an open question from~\cite{MortimerPrellberg}.

The bijections between forward paths and Motzkin paths have been implemented in \texttt{python} and are available at {\url{https://tinyurl.com/yajkqlyv}}.

\section{From forward paths to generic triangular paths}
\label{s:symmetry}

This section describes a one-to-$2^n$ function from the set of forward paths of length $n$ in $\T_L$ to the set of generic paths of length $n$ in $\T_L$.
This is a crucial step in finding a combinatorial proof of Theorem~\ref{theo:mortimerprellberg}.

More precisely, we are going to describe a bijection between {different} sets of paths {where in each set, all paths} have the same sequence of forward and backward steps, which we call {the} \textit{direction vector}.

\begin{Definition}\label{def:direction_vector}
The \emph{direction vector} of a generic path $(\omega_1,\ldots,\omega_n)$ is the finite sequence $(D_1,\dots,D_n)$ where $D_i = F$ if $\omega_i$ is a forward step {and} $D_i = B$ {if $\omega_{i}$} is a backward step.
\end{Definition}

A forward path is then a generic path with direction vector $(F,\ldots,F)$. Many examples of paths along with their direction vectors are shown in Figure~\ref{figure:boolean}.

%
%For $L\in\N, z\in \T_L, n\in\N$ and $W\in\{F,B\}^n$, we denote by $\C(L,z,W)$ the set of triangular paths of direction $W$ starting from $z$, and by $C(L,z,W)$ the cardinal of $\C(L,z,W)$.
%

\begin{Theorem}\label{theo:directions}
Given $z \in \T_L$ and two sequences $W$ and $W'$ of $\{F,B\}^n$, the set of triangular paths starting from $z$ of direction vector $W$ {is} in bijection with the set of triangular paths starting from $z$ of direction vector $W'$. 
\end{Theorem}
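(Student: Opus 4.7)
My plan is to combine a local adjacent-swap bijection (the ``convergent rewriting system'' alluded to in the introduction) with a strong induction on the length $n$. Throughout, let $N(z, W)$ denote the number of triangular paths in $\T_L$ starting from $z$ with direction vector $W$.

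First, I would establish, for any $z \in \T_L$ and any direction vectors $U, V$, a bijection between triangular paths with direction vector $U \cdot FB \cdot V$ and those with direction vector $U \cdot BF \cdot V$. Given a path containing $\omega_i = s_a$ and $\omega_{i+1} = \overline{s_b}$ at the swapping positions, the rule is: if $a \neq b$, commute to $(\overline{s_b}, s_a)$; if $a = b$, replace the cancelling pair by the different cancelling pair $(\overline{s_{a-1}}, s_{a-1})$. A short case check confirms that the new intermediate point stays in $\T_L$ whenever the original does: when $a \neq b$, examining the $b$-coordinate of $z_{i+1} = z_{i-1} + s_a - s_b \in \T_L$ forces $z_{i-1, b} \geq 1$, which is exactly the condition for $z_{i-1} + \overline{s_b} \in \T_L$; when $a = b$, both intermediate points require the identical condition $z_{i-1, a-1} \geq 1$. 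Iterating this local swap shows that $N(z, W)$ depends only on the multiplicities $(|W|_F, |W|_B)$, which I write $N_{p,q}(z)$. The rewriting system is the rule $FB \to BF$ on direction vectors, which is terminating and confluent, with normal forms $B^q F^p$.

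For full independence from $W$, I would proceed by strong induction on $n$; the base $n = 0$ is trivial. Under the hypothesis that the theorem holds for all lengths below $n$, the quantity $N(r, V)$ for $|V| = m < n$ depends only on $r$ and $m$; denote this common value $M(r, m)$. A first-step decomposition yields
\[
N_{p+1, q}(z) = \sum_{s_a \text{ valid}} M(z + s_a, n-1), \qquad N_{p, q+1}(z) = \sum_{\overline{s_b} \text{ valid}} M(z + \overline{s_b}, n-1),
\]
so the induction reduces to proving these two sums are equal for every $z \in \T_L$.

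To close the induction for $n \geq 2$, I would apply the inductive hypothesis a second time: rewriting $M(z + s_a, n-1) = N(z + s_a, BF^{n-2})$ and then decomposing shows that the left-hand sum equals $N(z, FBF^{n-2})$, while the analogous rewriting $M(z + \overline{s_b}, n-1) = N(z + \overline{s_b}, F^{n-1})$ shows that the right-hand sum equals $N(z, BFF^{n-2}) = N(z, BF^{n-1})$. These two counts coincide by the adjacent-swap bijection from the first step applied at positions $1$ and $2$, closing the induction. (For $n = 1$, both sums trivially equal the number of positive components of $z$.) The main obstacle I anticipate is bypassing an apparent circularity: a naive induction reduces the theorem to an identity of the form $\sum_{s_a} M(z + s_a, m) = \sum_{\overline{s_b}} M(z + \overline{s_b}, m)$, which at first sight seems to require the theorem itself at length $m + 1$. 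The double application of the inductive hypothesis, converting both sums into counts of length-$n$ paths whose direction vectors differ only by a single adjacent $FB \leftrightarrow BF$ swap, is what neatly resolves this difficulty.
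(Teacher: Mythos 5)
Your argument is correct, but it reaches the conclusion by a genuinely different route than the paper. Your first step coincides exactly with the paper's swap flips (Definition~\ref{def:flips}), and your validity check is sound (for the commuting case you verify one direction explicitly; the reverse direction follows by the symmetric computation on the $(a-1)$-coordinate of the endpoint). Where you diverge is in handling direction vectors with different numbers of $F$'s and $B$'s: the paper introduces a second elementary move, the last-step flip $s_i \lra \overline{s_{i-1}}$, and then must show that the bijection obtained by composing flips is independent of the chosen flip sequence, which requires the tiling/uniqueness argument of Proposition~\ref{prop:tiling}; the payoff is a canonical, explicit, linear-time bijection that the paper reuses later (Algorithm~\ref{algo1}, the map $W_n$ in Section~\ref{s:expo}, and the bicolored construction of Subsection~\ref{ss:bico}). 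You instead close the gap by a strong induction on $n$, converting the two first-step sums back into $N(z,FBF^{n-2})$ and $N(z,BF^{n-1})$ via a second use of the inductive hypothesis and comparing them by a single adjacent swap; this neatly sidesteps the apparent circularity and is essentially a generalization of the paper's own inductive proof of Theorem~\ref{theo:first} (which uses Lemma~\ref{lemma:length2} in the same role). The trade-off is that for unequal multiplicities your proof yields equinumeracy of finite sets, hence the existence of a bijection as the theorem literally asserts, but not an explicit or canonical one, so it avoids the last-step flip and the confluence machinery at the cost of not providing the constructive map the rest of the paper builds on.
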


This theorem will be proved in Section~\ref{ss:bij_generic}.

\subsection{Forward and backward paths}

This subsection shows by induction, without a bijection, a particular case of Theorem~\ref{theo:directions} between two direction vectors: $W= (F,\dots,F)$ and $W'=(B,\dots,B)$.
This provides an elementary proof of a weaker result, which enables to understand why the more general theorem works.

\begin{Definition}
A \emph{backward (triangular) path} is a triangular path of direction vector $(B,B,\dots,B)$. In other words, a backward path starting at $z \in \T_L$  is a sequence $(\overline{\sigma_1},\ldots,\overline{\sigma_n})\in \Sb^{n}$ satisfying:
\[\forall k\in\{1,\ldots,n\}, \quad z+\sum_{i=1}^k \overline{\sigma_i} \in \T_L.\]
\end{Definition}

%Let $L$ be any positive integer. For $z\in \T_L$ and $n \in \N$, we denote by $\Pf(z,n)$ (resp. $\Pb(z,n)$) the set of forward (resp. backward) paths inside $\T_L$ of length $n$ starting from~$z$. 

\begin{Theorem} Let $z$ be any point of $\T_L$ and $n \geq 0$. Inside $\T_L$, there are as many \emph{forward} paths of length $n$ starting from $z$ as \emph{backward} paths of length $n$ starting from $z$.
\label{theo:first}
\end{Theorem}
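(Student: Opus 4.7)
The plan is to induct on $n$, the base case $n=0$ being trivial. For the inductive step, I will recast the problem as a matrix identity. Let $A$ be the adjacency matrix (indexed by $\T_L$) of the directed graph on $\T_L$ whose edges are the forward steps; then $f_n(z)=(A^n\mathbf{1})_z$ and $b_n(z)=((A^T)^n\mathbf{1})_z$, where $\mathbf{1}$ is the all-ones vector, and the theorem amounts to the identity $A^n\mathbf{1}=(A^T)^n\mathbf{1}$ for every $n\ge 0$.

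The argument rests on two structural facts. The first is $A\mathbf{1}=A^T\mathbf{1}$: at any $z\in\T_L$, both the number of available forward steps and the number of available backward steps equal the number of nonzero coordinates of $z$, since $s_i$ is available from $z$ iff $z_{i-1}\ge 1$ while $\overline{s_i}$ is available iff $z_i\ge 1$. The second is that $A$ commutes with its transpose, $AA^T=A^T A$. Combinatorially, $(AA^T)_{z,z'}$ counts common forward-successors of $z$ and $z'$, while $(A^T A)_{z,z'}$ counts common forward-predecessors. For $z=z'$ both counts agree. For $z\ne z'$, both counts are zero unless the difference $\delta=z'-z$ lies in the finite set $\{s_i-s_j:i\ne j\}$ of six vectors, in which case $(i,j)$ is uniquely determined, the unique candidates for the common successor and the common predecessor are $w=z+s_i=z'+s_j$ and $w'=z-s_j=z'-s_i$, and one checks directly that both $w$ and $w'$ lie in $\T_L$ as soon as $z,z'$ do.

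With these two facts, the inductive step is a single line: assuming $A^{n-1}\mathbf{1}=(A^T)^{n-1}\mathbf{1}$,
\[
A^n\mathbf{1}\;=\;A\,(A^T)^{n-1}\mathbf{1}\;=\;(A^T)^{n-1}A\,\mathbf{1}\;=\;(A^T)^{n-1}A^T\mathbf{1}\;=\;(A^T)^n\mathbf{1},
\]
where the first equality uses the induction hypothesis, the second uses the commutation $A(A^T)^{n-1}=(A^T)^{n-1}A$ (a consequence of $AA^T=A^T A$), and the third uses the identity $A\mathbf{1}=A^T\mathbf{1}$. The main obstacle will be the normality $AA^T=A^T A$: the case analysis outlined above is elementary, but must be carried out carefully. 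The rotational symmetry $(x_1,x_2,x_3)\mapsto(x_3,x_1,x_2)$ of $\T_L$, which permutes the six difference vectors $s_i-s_j$ in two orbits of size three, reduces the number of subcases to check.
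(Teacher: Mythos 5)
Your proposal is correct and is essentially the paper's own argument recast in matrix language: your normality identity $AA^T=A^TA$ is precisely the paper's Lemma on length-$2$ paths (as many forward-then-backward as backward-then-forward paths between fixed endpoints, checked by the same case analysis on the difference $z'-z$), and $A\mathbf{1}=A^T\mathbf{1}$ is its base case $n=1$. The only difference is presentational: commuting $A$ past the power $(A^T)^{n-1}$ replaces the paper's strong induction using the two previous lengths, so the proof content coincides.
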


The proof will use the following lemma, which concerns paths with \textit{one} forward step and \textit{one} backward step:% claims that paths with \textit{one} forward step and \textit{one} backward step are counted by the same number, regardless of whether the forward step is after or before the backward step.

\begin{Lemma} Given a starting point $z$ and an ending point $z'$, there are as many paths of length $2$ from $z$ to $z'$ made of a forward step then a backward step, as paths of length $2$ from $z$ to $z'$ made of a backward step then a forward step.
\label{lemma:length2}
\end{Lemma}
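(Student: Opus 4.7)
The plan is to enumerate length-$2$ paths of type forward-backward (FB) and of type backward-forward (BF) by the pair of indices $(i,j) \in \{1,2,3\}^2$ they use: an FB-path has steps $(s_i, \overline{s_j})$ and a BF-path has steps $(\overline{s_j}, s_i)$, so in both cases the net displacement is $s_i - s_j$. Since the six vectors $\{s_i - s_j : i \neq j\}$ are pairwise distinct and non-zero while $s_i - s_i = 0$, the argument naturally splits according to whether $z = z'$ or $z \neq z'$.

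In the case $z \neq z'$, either $z' - z$ is not of the form $s_i - s_j$, in which case both counts are $0$, or there is a unique pair $(i,j)$ with $i \neq j$ such that $s_i - s_j = z' - z$. The FB-path is then valid iff its intermediate point $z + s_i$ belongs to $\T_L$, and the BF-path is valid iff $z - s_j \in \T_L$. The key observation is that both conditions are automatically implied by $z' \in \T_L$. Writing $z = x_1 e_1 + x_2 e_2 + x_3 e_3$, the condition $z + s_i \in \T_L$ reduces to a single inequality $x_k \geq 1$ where $k$ is the coordinate decreased by $s_i$, and similarly $z - s_j \in \T_L$ reduces to $x_{k'} \geq 1$ for $k'$ the coordinate decreased by $-s_j$. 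Distinguishing the subcases $j \equiv i+1$ and $j \equiv i-1$ modulo $3$, a short coordinate check shows that in each subcase the constraint $z' \in \T_L$ already forces both inequalities. Hence both counts equal $1$.

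In the case $z = z'$, only the three diagonal pairs $(i,i)$ contribute, and the two counts reduce to $\#\{i : z + s_i \in \T_L\}$ and $\#\{i : z - s_i \in \T_L\}$. A direct computation in coordinates gives $\{i : z + s_i \in \T_L\} = \{i : x_{k(i)} \geq 1\}$ for the permutation $k = (1\,3\,2)$ of $\{1,2,3\}$, while $\{i : z - s_i \in \T_L\} = \{i : x_i \geq 1\}$. Both sets therefore have cardinality equal to the number of non-zero coordinates of $z$, so the two counts agree.

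The whole proof is a finite case analysis, so I do not anticipate a conceptual obstacle. The only step that requires a careful check is the claim, in the case $z \neq z'$, that $z' \in \T_L$ forces both candidate intermediate points into $\T_L$. This hinges on the fact that, for $i \neq j$, the coordinate decreased by $s_i$ at the intermediate step is also decreased (by the same amount or more) in going from $z$ all the way to $z'$, and symmetrically for the coordinate decreased by $-s_j$, which makes the intermediate-point constraints automatic consequences of the endpoint constraint.
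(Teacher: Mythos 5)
Your proof is correct and follows essentially the same route as the paper's: a case split according to whether the forward and backward steps are opposite (equivalently $z=z'$), with the non-opposite case settled by checking that the single candidate path in each order stays in $\T_L$, and the $z=z'$ case settled by direct enumeration. The only cosmetic difference is that you replace the paper's figure-based check of the $z=z'$ case by an explicit coordinate count (both sides equal the number of strictly positive coordinates of $z$), and in the $z\neq z'$ case you observe that $z,z'\in\T_L$ already forces both intermediate points into $\T_L$, which is implicit in the paper's commutation argument.
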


\begin{proof} This lemma is obvious whenever the two steps can be permuted. 

Let us  {first} show that given a forward step $\sigma$ and a backward step~$\overline \tau$ such that $\sigma \neq - \overline \tau$, the path $(\sigma,\overline \tau)$ stays in $\T_L$ from $z$ to $z'$ if and only if the path $(\overline \tau, \sigma)$ stays in $\T_L$ from $z$ to $z'$. For such steps $\sigma$ and $\overline \tau$, there are two possibilities:
\begin{enumerate}
\item \textbf{ $\boldsymbol \sigma$ is a step $\boldsymbol{s_i}$ and $\boldsymbol {\overline \tau}$ is $\boldsymbol{\overline{s_{i+1}}}$.} By cyclic permutation, we can assume that $\sigma = s_1 = e_1 - e_3$ and $\overline \tau = \overline{s_2} = e_1 - e_2$. If $z+\sigma \in \T_L$ and $z + \sigma + \overline \tau \in \T_L$, then $z$ must have a positive $e_2$-coordinate and a positive $e_3$-coordinate. The same property holds if we replace the condition $z+\sigma \in \T_L$ by $z+\overline \tau \in \T_L$. Therefore, we can permute the forward step and the backward step in that case.
\item \textbf{ $\boldsymbol {\overline \tau}$ is a step $\boldsymbol{\overline{s_{i}}}$ and  $\boldsymbol \sigma$ is a step $\boldsymbol{s_{i+1}}$.} Again, we can assume that  $\overline \tau = \overline{s_1} = e_3 - e_1$ and $\sigma = s_2 = e_2-e_1$. Under the assumption that $z + \sigma + \overline \tau \in \T_L$, we need $z$ to have an $e_1$-coordinate at least equal to $2$. In this case, both paths $(\sigma,\overline \tau)$ and $(\overline \tau, \sigma)$ are valid.
\end{enumerate}  

\begin{figure}
\begin{center}
\includegraphics[width = 0.95 \textwidth]{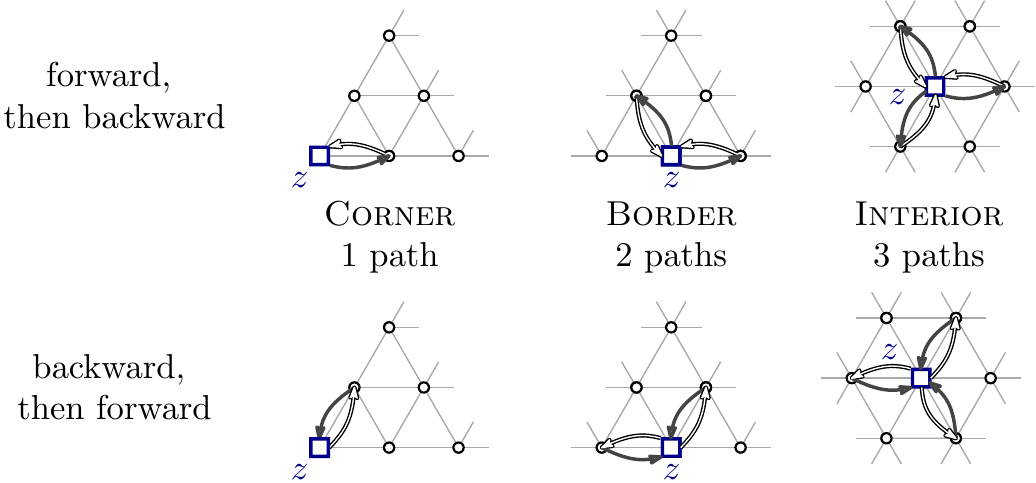}
\end{center}
\caption{All paths of length $2$ returning to their starting point.}
\label{figure:length2}
\end{figure}

It remains to deal with paths satisfying $\sigma = -\overline \tau$. It is equivalent to treat the case $z=z'$.  It is then easy to check that for each possible position of $z$, there are as many paths of length $2$ beginning with a forward step as paths of length $2$ beginning with a backward step,
%\begin{itemize}
%\item if $z$ is one of the three corners of $\T_L$, one counts $1$ path;
%\item if $z$ is on the border of $\T_L$ but not in the corner, one counts $2$ paths;
%\item if $z$ is not on the border of $\T_L$, one counts $3$ paths.
%\end{itemize}  
as summarized by Figure~\ref{figure:length2}.
\end{proof}

\begin{proof}[Proof of Theorem~\ref{theo:first}]
Let $f_n(z)$ be the number of forward paths of length $n$ and starting at $z \in \T_L$, and $b_n(z)$ be the analogue for backward paths.
We wish to prove that $f_n(z) = b_n(z)$ for every $z \in \T_L$ by strong induction on $n\geq 0$. 

For $n=0$ and $n=1$, the property is straightforward. 

Let us assume that the assumption is true for some $n \geq 1$ and $n-1$.
For $z \in \T_L$ we have:
\[
f_{n+1}(z) =  \sum_{ \substack{ \sigma \in \Sf \\ z + \sigma \in \T_L }}  f_n(z + \sigma).
\]
By the induction assumption,
\begin{align*}
f_{n+1}(z) & =  \sum_{ \substack{ \sigma \in \Sf \\ z + \sigma \in \T_L }}  b_n(z + \sigma)  
\\ &= \sum_{ \substack{ \textrm{path of length } 2\\\textrm{from }z\textrm{ to }z' \\ \textrm{made of a forward step}\\ \textrm{then a backward step}}}  b_{n-1}(z').
\end{align*}
We use the induction assumption now for $n-1$, and Lemma~\ref{lemma:length2}:
\begin{align*}
f_{n+1}(z) &= \sum_{ \substack{ \textrm{path of length } 2\\\textrm{from }z\textrm{ to }z' \\ \textrm{made of a backward step}\\ \textrm{then a forward step}}}  f_{n-1}(z')\\
 &= \sum_{ \substack{ \overline \tau \in \Sb \\ z + \overline \tau \in \T_L }}  f_n(z + \overline \tau). \\
  &= \sum_{ \substack{ \overline \tau \in \Sb \\ z + \overline \tau \in \T_L }}  b_n(z + \overline \tau). &\textrm{(by induction)} \\ & = b_{n+1}(z),
\end{align*}
which concludes the induction, and hence the proof.
\end{proof}

\subsection{Bijection between sets of different direction vectors}
\label{ss:bij_generic}

%To come up with a bijection between forward paths and backward paths, we need transitional objects mixing forward and backward steps. This is the purpose of the next definition.

In this subsection, we describe a bijection that proves Theorem~\ref{theo:directions}.

\begin{figure}
\begin{center}
\includegraphics[width = 0.95 \textwidth]{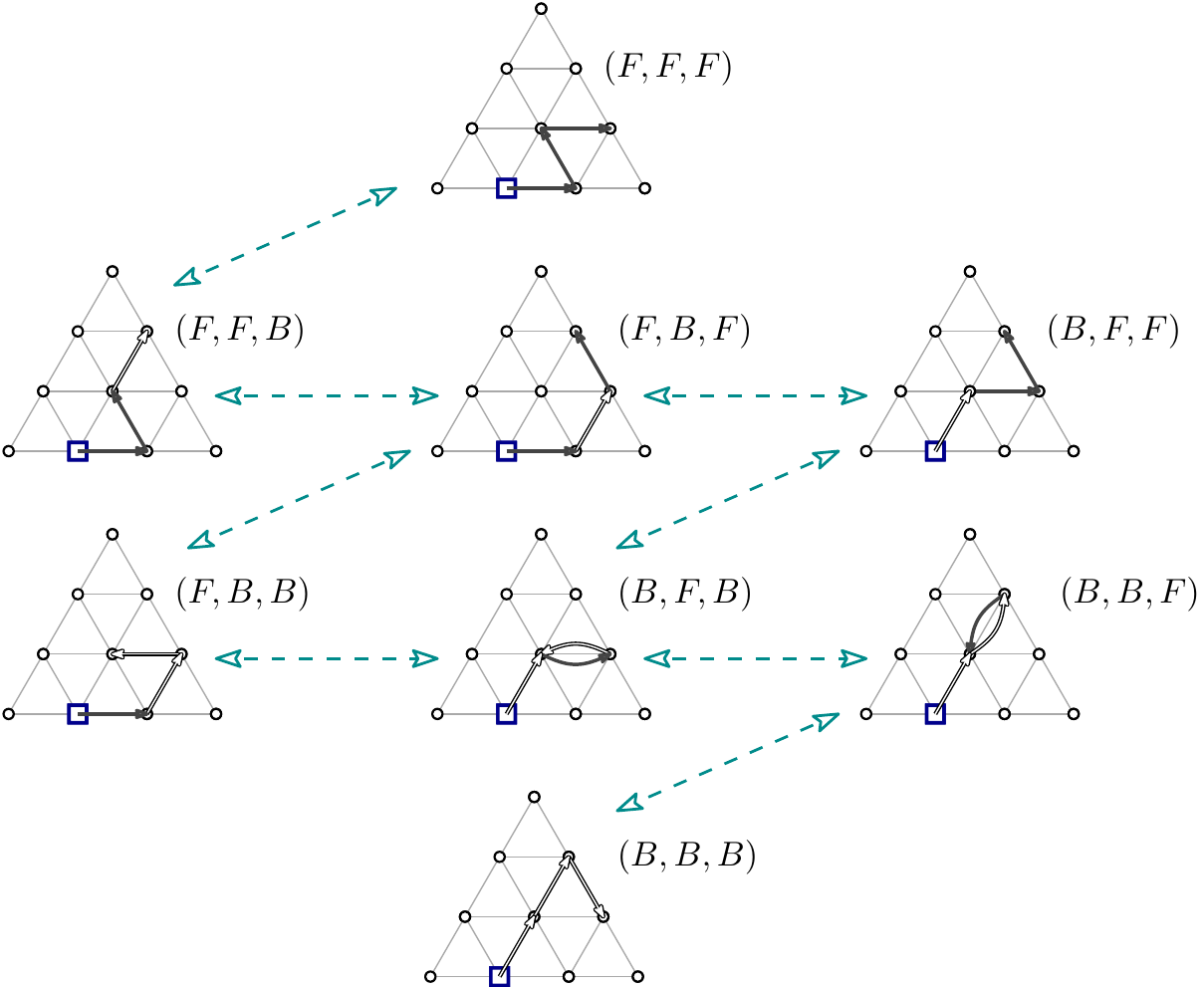}
\end{center}
\caption{The bijections between all direction vectors (arranged as a Boolean lattice) applied to the forward path $(s_1,s_2,s_1)$.}
\label{figure:boolean}
\end{figure}

This bijection consists in combining the elementary operations below, in any possible order, until reaching a path with the desired direction sequences.

\begin{Definition}[Flips] We define here elementary reversible operations on a generic path $(\omega_1,\dots,\omega_n)$. \\
{A \textbf{swap flip}} changes two consecutive steps $\omega_i$ and $\omega_{i+1}$ with respect to the rules:
\begin{align*}
(s_j,\overline{s_k}) \lra (\overline{s_k}, s_j)& \quad \mbox{if} \; j\not=k, \\ (s_k,\overline{s_k}) \lra (\overline{s_{k-1}},s_{k-1}) & \quad  \mbox{otherwise.}
\end{align*}
(Recall that by convention, $s_0 = s_3$.)
This has the effect of doing a flip $(F,B) \lra (B,F)$ in the direction vector.

{A} \textbf{last-step flip} changes the direction of the last step $\omega_n$ thanks to the rule:
\[s_i \lra \overline{s_{i-1}}\]
%\[s_1 \lra \bsc, \qquad s_2 \lra \bsa, \qquad s_3 \lra \bsb.\]

\label{def:flips}
\end{Definition}

For example, if we wish to bijectively transform the path $(\bsc,\bsc,\bsb)$ into a path of direction vector $(F,B,B)$, we use the following flips (cf Figure~\ref{figure:boolean}):
\begin{align*}
(\bsc,\bsc,\bsb) \quad & \underset{  \bsb \rightarrow s_3  }\longleftrightarrow \quad (\bsc,\bsc,s_3) \quad  \underset{ (\bsc,s_3) \rightarrow (s_1,\bsa)   }\longleftrightarrow \quad  (\bsc,s_1,\bsa) \\
 & \underset{  \bsa \rightarrow s_2  }\longleftrightarrow \quad  (\bsc,s_1,s_2) \quad   \underset{ (\bsc,s_1) \rightarrow (s_1,\bsc)   }\longleftrightarrow \quad  (s_1,\bsc,\bsa).
\end{align*}

Note that swap flips give a constructive proof to Lemma~\ref{lemma:length2}.

\begin{proof}[Proof of Theorem~\ref{theo:directions}]

We want to prove that successive flips induce a well-defined bijection between sets of triangular paths with different direction vectors. To do so, we have to establish the following points.

\begin{enumerate}
\item \textbf{The flips are well defined.} 

In other words, we want to show that a flip does not make a path of $\T_L$ go outside $\T_L$.

For  flips swapping steps $s_i$ and $\overline{s_j}$ such that $s_i \neq - \overline{s_j}$, we showed in the proof of Lemma~\ref{lemma:length2} that a forward step and a backward step can commute under the condition that the two steps are not opposite. 

The swap flip $(s_1,\bsa) \lra (\bsc,s_3)$ is also well-defined because $s_1$ and $\bsc$ have both a negative $e_3$-coordinate. Therefore, the position of the point just before the flip must have a positive $e_3$-coordinate. One can safely  apply $s_1$ or $\bsc$. 

%Similar arguments hold for the last flips.
Similar arguments hold for the other swap flips, and for last-step flips.

\item \textbf{Each flip is bijective.}

This is clear from the definition of the flips.

\item \textbf{Given two sequences $W$ and $W'$ of $\{F,B\}^n$, one can transform any path with direction vector $W$ into a path of direction vector $W'$ by successive flips.}

If $W$ and $W'$ have the same number of $B$'s, then we can use swap flips to transform a walk of direction vector $W$ into one of direction vector $W'$.

Otherwise, we can increment (resp. decrement) the number of $B$'s of the direction vector by putting a forward step (resp. a backward step) at the end of the walk using successive swap flips, then changing the direction of this last step using a last-step flip. We rinse and repeat until obtaining the desired number of $B$'s, then use swap flips as above.

\item \textbf{If two different sequences of flips lead to triangular paths $p$ and $p'$ that share a same direction vector, then $p = p'$.
}\label{item:unique}
\end{enumerate}
The proof of the last point is postponed {until} the next subsection (Proposition~\ref{prop:tiling}).
\end{proof}

In particular, Theorem~\ref{theo:directions} gives a bijective proof of Theorem~\ref{theo:first}. If we wish to make it explicit, we can write an algorithm that chooses a specific sequence of flips that transforms an $(F,\dots,F)$ direction vector into a $(B,\dots,B)$ vector.

\begin{Corollary} Given $z \in \T_L$ and an integer $n$, Algorithm~\ref{algo1} forms a bijection between forward paths of length $n$ starting at $z$ and backward paths of length $n$ starting at $z$. This bijection depends neither on the length $L$ of the triangular lattice, nor on the position of the starting point $z$.
\end{Corollary}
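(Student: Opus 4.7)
The plan is to deduce the corollary from Theorem~\ref{theo:directions} applied to the two direction vectors $W=(F,\ldots,F)$ and $W'=(B,\ldots,B)$: that theorem, specialized to this case, already yields a bijection between forward paths and backward paths of length $n$ starting at $z$. What remains is to pin down a specific deterministic sequence of flips (which is what Algorithm~\ref{algo1} does) and to check that its description is formulated without any reference to $L$ or $z$.

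For the algorithm itself I would process the direction vector stage by stage. At the start of stage $k\in\{0,\ldots,n-1\}$ the direction vector has the shape $(\underbrace{B,\ldots,B}_{k},\underbrace{F,\ldots,F}_{n-k})$. One applies a last-step flip at position $n$, turning the final $F$ into a $B$, and then performs $n-k-1$ successive swap flips in order to slide this freshly-created $B$ leftward past the forward steps at positions $k+1,\ldots,n-1$, so that the direction vector becomes $(\underbrace{B,\ldots,B}_{k+1},\underbrace{F,\ldots,F}_{n-k-1})$. After stage $n-1$ the direction vector is $(B,\ldots,B)$, so the output is a backward path of length $n$ starting from $z$. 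Since each individual flip is a bijection on the set of triangular paths of fixed starting point (items~(1) and~(2) of the proof of Theorem~\ref{theo:directions}), the composition of all the flips performed by the algorithm is also a bijection between the two sets in question.

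Independence from $L$ and from $z$ is then essentially tautological: the flips of Definition~\ref{def:flips} are purely symbolic rewriting rules on the alphabet $\Sf\cup\Sb$, and no instruction in the algorithm above consults the absolute positions occupied by the path or the value of $L$. Consequently, the output step sequence is a function of the input step sequence alone. The only thing that needs care, and is the mild obstacle in a formal write-up, is the verification that every intended swap flip is actually applicable: this is immediate since at each such swap the adjacent pair has direction type $(F,B)$ and thus matches one of the two cases of Definition~\ref{def:flips}, while lattice containment of all intermediate paths is guaranteed by item~(1) of the proof of Theorem~\ref{theo:directions}.
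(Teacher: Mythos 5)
Your proposal is correct and takes essentially the same route as the paper's (largely implicit) justification: the corollary follows by composing the well-defined, bijective flips of Definition~\ref{def:flips} in exactly the staged order performed by Algorithm~\ref{algo1} (last-step flip, then slide the new backward step leftward by swap flips), with independence from $L$ and $z$ coming from the fact that the flip rules are purely symbolic rewriting rules on $\Sf\cup\Sb$. You also rightly use only the well-definedness and bijectivity of the individual flips, not the uniqueness statement, which is indeed not needed for this corollary.
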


\begin{algorithm}[caption={Bijection between forward paths and backward paths (for \textit{flips}, see Definition~\ref{def:flips}).}, label={algo1}]
input: a forward path p
output: a backward path p
n $\gets$ length of p;
for i from 1 to n 
do make a last-step flip on p[i];
   for j decreasing from n-1 to i 
   do	make a swap flip between p[j] and p[j+1];  
\end{algorithm}

\begin{Remark} Algorithm~\ref{algo1} also transforms (in a bijective manner) a backward path into a forward path. Thus, if we apply twice Algorithm~\ref{algo1} to a forward path, we also obtain at the end a forward path. Therefore, assuming that the uniqueness claimed in Item~\eqref{item:unique} in the proof of Theorem~\ref{theo:first} holds (and it does), the two forward paths must be the same: Algorithm~\ref{algo1} is in fact an involution. 
\end{Remark}

\subsection{Description of the bijection in terms of folded paths}

This section presents the bijection of Theorem~\ref{theo:directions}  in a more symmetric fashion. The last-step flip, which we defined in Definition~\ref{def:flips}, can be actually seen as a disguised swap flip, under the condition that the path is extended to what we call a \textit{folded path}.

\begin{Definition}[Folded paths]
Given a generic path $\omega = (\omega_1,\dots,\omega_n) \in (\Sf\cup \Sb)^n$, we define the \emph{folding} of $\omega$ as the path 
$$\dba{\omega}=(\omega_1,\ldots,\omega_n,-\omega_n,\ldots,-\omega_1).$$
Such paths are said to be \emph{folded}.
\end{Definition}

%There is an obvious bijection between generic paths of length $n$ and folded paths of lengths $2n$. 

Let us denote by $\mathcal S_n$ the tilted square lattice
\[ \mathcal S_n = \{ (i,j) \, \in \, \N \times \N \quad : \quad |i| + |j| \leq n \}. \]
We will geometrically represent folded paths of length $2n$ as labeled walks on $\mathcal S_n$ starting at $(-n,0)$. To construct the walk on $\mathcal S_n$, we replace every forward step  by a North-East step $(+1,+1)$, and every backward step by a South-East step $(+1,-1)$. Moreover, these North-East and South-East steps will carry labels, which are the steps of $\Sf \cup \Sb$ from which they originate. For example, the folding of the path $(s_1,\bsc,\bsa)$ is represented on the left of Figure~\ref{figure:diamond}.

\begin{figure}
\begin{center}
\includegraphics[width = 0.95 \textwidth]{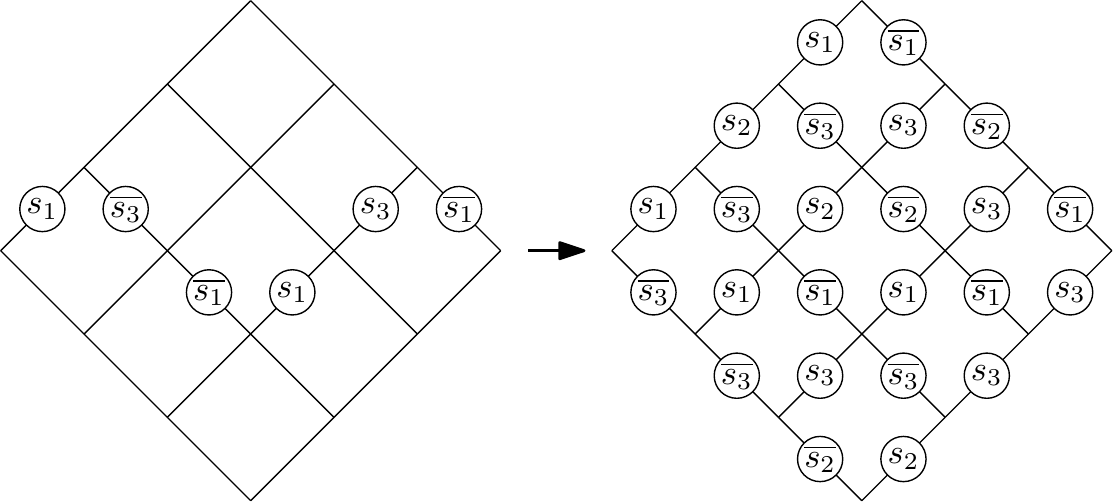}
\end{center}
\caption{The geometric representation of the bijection}
\label{figure:diamond}
\end{figure}

Now, we are going to emulate the effect of swap flips (see Definition~\ref{def:flips}) on these walks. More precisely, we view $\mathcal S_n$ as a square of size $n \times n$ which can be filled out with $1 \times 1$ square tiles of $9$ types (see Figure~\ref{figure:rules}). The four sides of the $9$ allowed tiles are labeled with elements of $\Sf \cup \Sb$ such that the pairs formed by the two top labels and the two bottom labels correspond to a commutation rule described in Definition~\ref{def:flips}. 

The tiling of $S_n$ proceeds as follows. We begin with the labels given by a folded path. 
Then, we place copies of the tiles of Figure~\ref{figure:rules} in such a way that the two top labels or the two bottom labels match (like a domino) with labels which were already in $S_n$.
Eventually, we obtain an alternative description of the bijection of Theorem~\ref{theo:first}, and thus the required uniqueness:

\begin{figure}
\begin{center}
\includegraphics[scale=1.1]{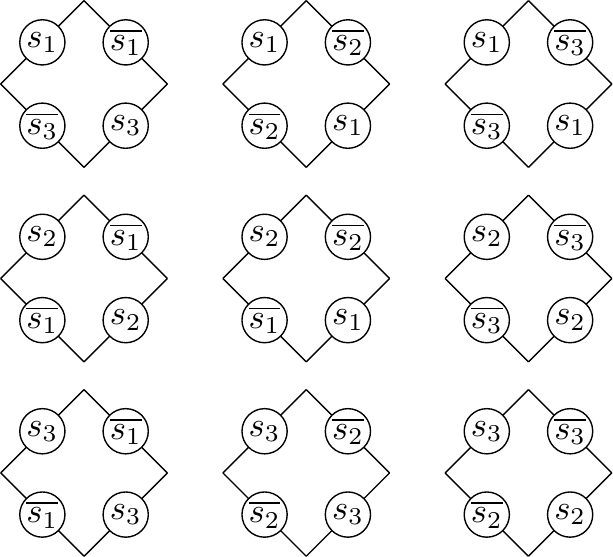}
\end{center}
\caption{The $9$ possible tiles}
\label{figure:rules}
\end{figure}

\begin{Proposition} Let $\dba \omega$ be the folding of a triangular path $\omega$ of length $n$, which we  embed in the tilted square lattice $\mathcal S_n$ as described above.

There is a unique way to tile $\mathcal S_n$ with the $9$ tiles of Figure~\ref{figure:rules} while preserving the labels of $\dba \omega$.

Furthermore, let us fix a sequence $W = (W_1,\dots,W_n)$ of $\{F,B\}^n$. The path of direction vector $W$ which corresponds to $\omega$ under the bijection of Theorem~\ref{theo:directions} is defined by the sequence of labels obtained by following the walk in $S_n$ whose $k$-th step is North-East if $W_k = F$ or South-East if $W _k = B$.

\label{prop:tiling}
\end{Proposition}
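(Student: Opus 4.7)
My plan is to construct the tiling explicitly by propagating labels outward from the folded path, and then to derive uniqueness from the deterministic character of the nine rules.

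First, I would verify that $\dba\omega$ embeds in $\mathcal{S}_n$ as a north-east/south-east walk from $(-n,0)$ to $(n,0)$: every step has $x$-component $+1$ (so after $2n$ steps the walk is at $x=n$), the total vertical displacement vanishes because the second half of $\dba\omega$ is the negation of the first, and the walk stays inside $\mathcal{S}_n$ since after $k$ steps $|y_k|$ is bounded both by $k$ (from the first half) and by $2n-k$ (since the walk must return to $y=0$). Second, I would record the combinatorial heart of the argument: the nine rules of Figure~\ref{figure:rules} define a bijection between the nine-element set $\Sf \times \Sb$ of possible top pairs of a tile and the nine-element set $\Sb \times \Sf$ of possible bottom pairs. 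Consequently, knowing either the two top or the two bottom labels of a tile uniquely determines the other two.

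Equipped with this deterministic rule, I would describe an iterative filling procedure. The folded path splits $\mathcal{S}_n$ into an upper and a lower region. To tile the lower region, I repeatedly look for a peak (an NE-step immediately followed by an SE-step) on the current upper boundary of the still-unfilled area, initialized to $\dba\omega$. The tile just below that peak has both of its top edges already labeled, so its bottom edges are forced by the rules; placing it replaces the peak by a valley one level lower. Any walk from $(-n,0)$ to $(n,0)$ with $n$ NE- and $n$ SE-steps that differs from the lower boundary $\mathrm{SE}^n\mathrm{NE}^n$ necessarily contains such a peak, so the procedure cannot stall; it terminates because each placement strictly decreases the finite area between the current boundary and the lower boundary. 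A fully symmetric procedure (raising valleys) fills the upper region, so after finitely many steps the whole of $\mathcal{S}_n$ is tiled.

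Uniqueness --- equivalently, independence of the filling on the order in which peaks and valleys are processed --- is the delicate point. Two flips at non-adjacent positions obviously commute. Flips that interact (share an edge, or are performed on two faces of the same unit tile) also commute, which I would verify by a finite case check on the nine rules; combined with termination this yields global confluence in the spirit of Newman's diamond lemma, so the tiling produced is uniquely determined by $\dba\omega$. The main obstacle of the whole proof lies precisely in this local-confluence enumeration, which is elementary but requires care.

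Finally, for the second assertion of the proposition, I would argue by induction on the number of elementary flips: any NE/SE walk from $(-n,0)$ with direction vector $W$ can be reached from the walk encoding $\omega$ through successive elementary moves which either swap two adjacent opposite steps (corresponding to crossing a single tile on the opposite side) or change the last step of the walk from NE to SE or vice versa (corresponding to going around the unique tile attached to the endpoint of the walk). By the nine rules, each such move transforms the read labels exactly according to the corresponding swap flip or last-step flip of Definition~\ref{def:flips}. Hence the labels read along the $W$-walk are obtained from $\omega$ by the same sequence of flips used in the proof of Theorem~\ref{theo:directions}, and thus coincide with the image of $\omega$ under that bijection.
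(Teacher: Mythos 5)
Your treatment of the first assertion is essentially the paper's argument: each of the nine tiles is uniquely determined by its top pair in $\Sf\times\Sb$ and by its bottom pair in $\Sb\times\Sf$, so tiles placed under peaks (and above valleys) are forced, and the peeling argument gives existence. The Newman's-lemma detour is heavier than needed, and your ``equivalently'' is slightly loose: uniqueness of the tiling is not literally the same as order-independence of your procedure until you add the observation that \emph{any} tiling agrees, tile by tile, with any run of the procedure (the tile of that tiling sitting under a peak of the current boundary has its top labels prescribed, hence is the forced one); with that remark, a direct induction gives uniqueness and the confluence machinery can be dropped.

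The genuine gap is in your last paragraph. For a swap move (trading the two top edges of a tile for its two bottom edges along the reading walk) the nine rules do reproduce the swap flips of Definition~\ref{def:flips}. But for the last-step move you compare the \emph{top-left} and the \emph{bottom-left} edges of the tile attached to the endpoint of the walk, and the nine rules alone do not relate these by $s_i \lra \overline{s_{i-1}}$: a tile with top pair $(s_j,\overline{s_k})$, $k\neq j$, has bottom-left label $\overline{s_k}$, so going around it replaces $s_j$ by $\overline{s_k}$, which is a last-step flip only when $k=j-1$. What saves the statement is that the endpoint of every length-$n$ reading walk lies on the vertical axis $x=0$, and every tile straddling this axis is forced to be one of the three ``diagonal'' tiles with top pair $(s_k,\overline{s_k})$ and bottom pair $(\overline{s_{k-1}},s_{k-1})$, for which top-left to bottom-left is exactly a last-step flip. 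This fact is not local: it comes from a symmetry of the whole tiling. The nine-tile set is invariant under the horizontal reflection $(x,y)\mapsto(-x,y)$ combined with negation of all labels ($s_i\leftrightarrow\overline{s_i}$), and the folded boundary $\dba{\omega}$ is invariant under this operation; by the uniqueness you proved, the tiling itself is invariant, and applying this to an axis-straddling tile forces its top-right label to be the negative of its top-left label, i.e.\ the tile is diagonal. (The paper avoids this issue by keeping the path folded throughout: the two middle steps are always opposite, so the tile placed at the fold is automatically diagonal.) As written, your claim ``by the nine rules \dots last-step flip'' is unjustified; adding this symmetry lemma closes the gap.
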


\begin{Example}
Let us consider the path $(s_1,\bsc,\bsa)$, represented in Figure~\ref{figure:rules} (left). The unique corresponding tiling is displayed on the right of the figure.

If we want the path of direction vector $(B,F,F)$ corresponding to $(s_1,\bsc,\bsa)$, then we have to read labels from the walk going SE, NE, NE (in this order). We find $(\bsc,s_1,s_2)$.
\end{Example}

\begin{proof}[Proof of Proposition~\ref{prop:tiling}]
The existence and the uniqueness of the tiling are proved by induction. We just have to notice that every pair $(\sigma,\overline \tau)$ with $\sigma \in \Sf$ and $\overline \tau \in \Sb$ appears once among the top labels of the $9$ tiles, and every pair $(\overline \tau, \sigma)$ appears also once among the bottom labels. We have no choice {in} how to place new tiles: the tiling is automatic and unambiguous.

To connect the tiling with the bijection of Theorem~\ref{theo:first}, note that:
\begin{itemize}
\item A swap flip at positions $k$ and $k+1$ can be emulated by positioning a tile along the $k$-th and the $(k+1)$-th step  \textit{and} by symmetrically placing a second tile along the $(2n-k+1)$-th and the $(2n-k)$-th step.
\item A last-step flip can be emulated by positioning a tile on the vertical axis of $S_n$. 
\end{itemize}
One thus recovers what we described in previous subsection.
\end{proof}

As a consequence, in view of the vertical symmetry of the tiling, one can describe the bijection of Theorem~\ref{theo:first} uniquely in terms of swap flips -- as claimed at the beginning of this subsection.

\begin{Corollary}
The folded paths of direction vector $(F,\dots,F,B,\dots,B)$ are in bijection with the folded paths of direction vector $(B,\dots,B,F,\dots,F)$ via successive uses of swap flips. 
\end{Corollary}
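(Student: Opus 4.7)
The plan is to leverage the tiling framework of Proposition~\ref{prop:tiling}. First observe that a folded path of direction vector $(F,\dots,F,B,\dots,B)$ is exactly the folding $\dba{\omega}$ of a forward path $\omega$ of length $n$, because if $\omega = (\omega_1, \dots, \omega_n)$ has direction vector $(F^n)$, then $\dba{\omega}$ tacks on $(-\omega_n, \dots, -\omega_1)$, all of which are backward. Symmetrically, $(B,\dots,B,F,\dots,F)$ corresponds to the folding of a backward path. In the geometric picture inside $\mathcal{S}_n$, these are respectively the upper boundary walk from $(-n,0)$ to $(n,0)$ (via $(0,n)$) and the lower boundary walk (via $(0,-n)$). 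So the claim amounts to deforming the upper boundary walk of $\mathcal{S}_n$ into the lower boundary walk using only local swap flips.

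The key step is to push the walk across the cells of $\mathcal{S}_n$ one at a time. A local swap at positions $(k,k+1)$ of the walk corresponds to reading one of the tiles of Figure~\ref{figure:rules} along its bottom instead of its top (or vice versa), and by Proposition~\ref{prop:tiling} every cell of $\mathcal{S}_n$ is filled by a unique such tile; the induced label change is by construction one of the swap rules of Definition~\ref{def:flips}. Thus a systematic sweep through $\mathcal{S}_n$ (for instance row by row from top to bottom, $n^2$ flips in total) transforms the upper boundary walk into the lower one, and by the uniqueness part of Proposition~\ref{prop:tiling} the final folded path is independent of the sweep order. This yields the desired swap-only bijection.

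The main obstacle is to reconcile this swap-only description at the folded level with the mixed description (swap flips plus last-step flips) at the level of $\omega$ used in Section~\ref{ss:bij_generic}, so as to confirm that the resulting bijection agrees with that of Theorem~\ref{theo:first}. A swap flip at positions $(k,k+1)$ of $\dba{\omega}$ with $k \neq n$ would violate the folded constraint unless accompanied by the mirror flip at positions $(2n-k, 2n-k+1)$; such a symmetric pair of swap flips on $\dba{\omega}$ is exactly the lift of a single swap flip on $\omega$, and the mirror rule is itself one of the nine tiles because the rule set of Definition~\ref{def:flips} is closed under the reflection swapping top and bottom labels. A swap flip at the fold axis $(n, n+1)$ acts on $(\omega_n, -\omega_n) = (s_k, \overline{s_k})$ and replaces it by $(\overline{s_{k-1}}, s_{k-1})$, which reads off on the first half as the last-step flip $s_k \leftrightarrow \overline{s_{k-1}}$. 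Thus every flip step of Algorithm~\ref{algo1} lifts to one or two swap flips on $\dba{\omega}$, and composing all lifts gives a sequence of swap flips carrying $\dba{\omega}$ to $\dba{\omega'}$, where $\omega'$ is the image of $\omega$ under the bijection of Theorem~\ref{theo:first}. This is the claimed bijection.
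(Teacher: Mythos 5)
Your proposal is correct and takes essentially the same route as the paper: the corollary is deduced from Proposition~\ref{prop:tiling}, with each swap flip on $\omega$ lifting to a symmetric pair of tiles, each last-step flip to a tile on the vertical axis, and the uniqueness of the tiling guaranteeing that the swap-only description is consistent and independent of the order of operations. Your explicit row-by-row sweep across the $n^2$ cells simply makes concrete the paper's appeal to the vertical symmetry of the tiling.
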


\section{A first bijection between forward paths and Motzkin meanders}
\label{s:expo}

In this section, we provide two proofs of Corollary~\ref{cor:forward-motzkin}: the first one uses an induction and is elementary, the second one is based on a recursive bijection which is derived from the first proof.

\subsection{Recursive proof of the equinumeracy}
\label{ss:equinumerosity}

The following proposition links Motzkin meanders and forward paths starting from the border of $\T_L$.

%For $H\in\N$ and $\ell\in\{0,\ldots,H\}$, we denote by $\Mo{H}{\ell}{n}$ the number of paths using $n$ steps according to the vectors $(1,1)$, $(1,0)$, $(1,-1)$, starting at height $\ell$, ending at height~$0$, and always staying between the axis $y=0$ and the axis $y=H$.

%
%\irene{Adjust the terminology with the clarification between paths/meanders that will be done in the introduction.}

%Let us consider one corner of the triangular lattice $\T_L$, let say the one defined by $C_1=Le_1$. The points $C_1+ks_2$, for $k\in\{0,\ldots,L\}$, thus cover the border of $\T_L$ that is between the corners $C_1$ and $C_2=Le_2$. 

%Next proposition shows that the number of forward paths of length $n$ starting from these points are closely related to Motzkin paths of length $n$ with height bounded by $H=\lfloor L/2\rfloor$.

\begin{Proposition}\label{prop:motzkin_inductive} For any $n \geq 0$ and $L > 0$, let $f_n(z)$ be the number of forward paths in $\T_L$ of length $n$ starting at $z$, and $\Mo{L}{n}{\ell}$ the number of Motzkin meanders of length $n$ starting at height $\ell$ and with an amplitude bounded by $L$ (see  Subsection~\ref{ss:motzkin} for the definitions).

Then, we have the formula
\[f_n(\origin+\ell s_1)=\sum_{i=0}^{\ell} \Mo{H}{n}{i},\]
for $\ell \in \{0,\ldots,\lfloor L/2\rfloor\}$.
\end{Proposition}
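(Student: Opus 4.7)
My plan is to prove the proposition by induction on $n$, with base case $n = 0$ in which both sides trivially equal $1$ (the unique empty path and the unique empty meander at height $0$).

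For the inductive step I apply the first-step decomposition at $z = \origin + \ell s_1 = (\ell, 0, L - \ell)$. Only $s_1$ (leading to the bottom-edge point $(\ell + 1, 0, L - \ell - 1)$) and $s_2$ (leading to the off-edge point $(\ell - 1, 1, L - \ell)$) are available here, since $s_3$ requires $x_2 \geq 1$. Thus
$$ f_{n+1}(\origin + \ell s_1) = f_n(\ell + 1, 0, L - \ell - 1) + f_n(\ell - 1, 1, L - \ell). $$
The first summand is covered by the inductive hypothesis, after applying the reflective symmetry $f_n(x_1, x_2, x_3) = f_n(x_3, x_2, x_1)$ (a consequence of Theorem~\ref{theo:first} and the rotational symmetry of $\T_L$) to fold $\ell + 1$ back into $[0, \lfloor L/2 \rfloor]$ when necessary.

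On the Motzkin side, writing $T_n(j) := \sum_{i=0}^j \Mo{L}{n}{i}$ and using the first-step meander recursion $\Mo{L}{n+1}{i} = \Mo{L}{n}{i+1}[i + 1 \leq \lfloor L/2 \rfloor] + \Mo{L}{n}{i}[\text{horizontal allowed at } i] + \Mo{L}{n}{i-1}[i \geq 1]$, I sum over $i \in \{0, \ldots, \ell\}$ and telescope to obtain, in the generic range, the three-term recursion
$$ T_{n+1}(\ell) = T_n(\ell + 1) + T_n(\ell) + T_n(\ell - 1) - T_n(0), $$
with parity-dependent adjustments at $\ell = \lfloor L/2 \rfloor$ (where horizontal steps at the maximum meander height may be forbidden).

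The main obstacle is the off-edge term $f_n(\ell - 1, 1, L - \ell)$, which the main statement does not directly cover. Small-case experimentation suggests the auxiliary identity $f_n(\ell - 1, 1, L - \ell) = T_n(\ell) + T_n(\ell - 1) - T_n(0)$; substituting this into the $f_{n+1}$ decomposition and comparing with the $T_{n+1}$ recursion makes the two sides agree term-by-term, closing the induction for the main statement. The auxiliary identity is itself proved simultaneously by an induction of the same shape, which requires tracking $f_n$ at deeper interior points (those with $x_2 \geq 2$). In the non-symmetric cases that arise along the chain of successors descending from the bottom edge, these deeper points continue to satisfy an analogous closed form in terms of the $T_n(j)$; however, the chain can reach fully symmetric interior points (e.g., $(2, 2, 2) \in \T_6$) when $L$ is even, where the naive closed form breaks down by a small correction. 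I expect the complete treatment to sidestep this difficulty via the explicit bijective construction alluded to in the introduction (the map $\Omega$, based on an inclusion--exclusion argument), which produces the correct inductive equality directly without requiring a uniform closed form at every interior point.
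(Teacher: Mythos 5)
Your setup (induction on $n$, first-forward-step decomposition at $\origin+\ell s_1$, the telescoped meander recursion for $T_n(j)=\sum_{i=0}^{j}\Mo{H}{n}{i}$, and the reflection through $x_1=x_3$ via Theorem~\ref{theo:first} at the top row) matches the paper's, but your inductive step does not close, and the gap is exactly the one you flag: the interior term $f_n(\origin+\ell s_1+s_2)$. You propose to evaluate it by the conjectured identity $f_n(\origin+\ell s_1+s_2)=T_n(\ell)+T_n(\ell-1)-T_n(0)$, to be proved by a secondary induction penetrating the interior of $\T_L$, and you concede yourself that this secondary induction breaks down at symmetric interior points; falling back on the map $\Omega$ is not available either, since in the paper that recursive bijection is extracted \emph{from} this proposition's argument, so invoking it here would be circular. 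As written, the proof is incomplete.

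The paper's key move, absent from your proposal, is that the interior term never needs to be evaluated: it is cancelled. Besides decomposing $f_n(\origin+\ell s_1)$ by its first forward step, one also decomposes $f_n(\origin+(\ell-1)s_1)$ by its first \emph{backward} step, which is legitimate because Theorem~\ref{theo:first} gives $f_n=b_n$ at every point. From $\origin+(\ell-1)s_1$ the only backward steps are $\bsa$ and $\bsc$, and $\origin+(\ell-1)s_1+\bsc=\origin+\ell s_1+s_2$ is precisely your off-edge point. Writing $\f_n(\ell)=f_n(\origin+\ell s_1)$, this yields
\begin{align*}
\f_n(\ell)&=\f_{n-1}(\ell+1)+f_{n-1}(\origin+\ell s_1+s_2),\\
\f_n(\ell-1)&=\f_{n-1}(\ell-2)+f_{n-1}(\origin+\ell s_1+s_2),
\end{align*}
so the differences $\D\f_n(\ell)=\f_n(\ell)-\f_n(\ell-1)$ satisfy $\D\f_n(\ell)=\D\f_{n-1}(\ell-1)+\D\f_{n-1}(\ell)+\D\f_{n-1}(\ell+1)$ for $1\leq\ell\leq H-1$, i.e.\ exactly the recursion of the meander numbers $\Mo{H}{n}{\ell}$; the rows $\ell=0$ and $\ell=H$ are then handled as you intended, the reflection combined with Theorem~\ref{theo:first} giving $\D\f_{n-1}(H+1)=0$ when $L$ is odd and $\D\f_{n-1}(H)+\D\f_{n-1}(H+1)=0$ when $L$ is even, and matching base cases give $\D\f_n(\ell)=\Mo{H}{n}{\ell}$, hence the proposition by summation. (Your conjectured auxiliary identity is in fact true --- it is the instance of Theorem~\ref{theo:anywhere} where the profile is $(1,2,\dots,2,1,0,\dots)$ --- but in the paper that theorem is proved afterwards, by the same cancellation device, so it cannot serve as input here.)
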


As a particular case $\ell=0$ of the result above, 
we recover the statement of Corollary~\ref{cor:forward-motzkin}.

%the number $\Mo{H}{n}{0}$ of Motzkin paths of height bounded by $H$, starting and ending at height $0$, is equal to the number $f_n(\origin)$ of triangular paths of length $n$ starting from a corner of the triangle of side length $L=2H+1$. And if we forbid horizontal steps at height $H$, the number $\Mop{H}{n}{0}$ obtained is equal to the number of triangular paths of length $n$ starting from the corner of a triangle of side length $L=2H$. \julien{Must say here that it corroborates Prellberg's conjecture, but we should it before that.}

\begin{Example} Figure~\ref{figure:forward-motzkin} corroborates Proposition~\ref{prop:motzkin_inductive} with $n=3$, $L = 3$,  and $\ell = 1$: numbers agree ($8$ on each side). Remark that if $L$ is larger ($L \geq 4$), the forward path $s_1 s_1 s_1$ will be added on the left, and the Motzkin meander $\nearrow, \searrow, \searrow$ on the right.
\end{Example}

\begin{figure}
\begin{center}
\includegraphics[width = .9 \textwidth]{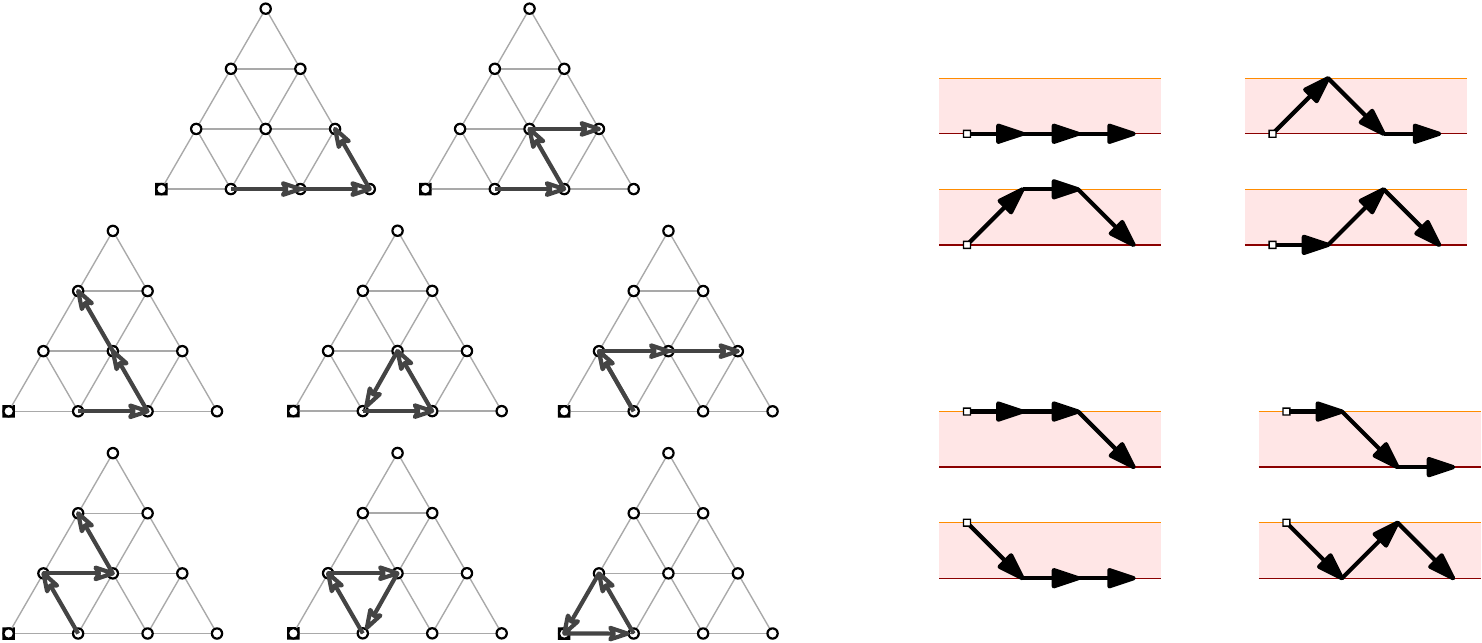}
\end{center}
\caption{\textit{Left.} $8$ forward paths of length $3$ starting from $\origin + s_1$ in $\T_3$. \textit{Right.} $8$ Motzkin meander of length $3$ and amplitude bounded by $L = 3$: four of them begin at height $0$, the remaining four begin at height $1$.}
\label{figure:forward-motzkin}
\end{figure}

\begin{proof}[Proof of Proposition~\ref{prop:motzkin_inductive}.] Let us introduce the notation $\f_n( \ell)=f_n(\origin + \ell s_1)$, with the convention that $\f_n(\ell)=0$ for $\ell<0$. Let us also write $\D \f_n(\ell)=\f_n(\ell)-\f_n(\ell-1)$, and $H = \lfloor L/2\rfloor$.

Note that the numbers of Motzkin meanders  $\Mo{H}{n}{\ell}$ satisfies the obvious recurrences 
\begin{align*} \Mo{H}{n}{\ell} & = \Mo{H}{n-1}{\ell-1} + \Mo{H}{n-1}{\ell} + \Mo{H}{n-1}{\ell+1} & \textrm{for }\ell \in \{1,\dots,H-1\}, \\
\Mo{H}{n}{0} & = \Mo{H}{n-1}{0} + \Mo{H}{n-1}{1}, \\
\Mo{H}{n-1}{H} & =  \left\{ \begin{array}{ll}\Mo{H}{n-1}{H-1} + \Mo{H}{n}{H} & \textrm{ if }L\textrm{ is odd}\\ \Mo{H}{n-1}{H-1} & \textrm{ if }L\textrm{ is even}\\ \end{array}\right. ,
\end{align*}
for $n \geq 1$. The proof is completed whenever we find the same recurrences for $\Delta g_n(i)$. The reader can refer to Figure~\ref{figure:recursion_explanation} as a visual support for what follows.

% By Theorem~\ref{theo:first}, we have $\f_n(k)=b_n(\origin+\ell s_1)$, where $b_n(z)$ is the number of backward paths of length $n$ starting at $z$. Furthermore, by exchanging the roles of forward and backward steps, we can deduce that $\f_n(k) = f_n(C_1+k \bsa)$.  \julien{I don't understand why there is this sentence; I removed it.}

\begin{figure}
\begin{center}
\includegraphics[width = .9 \textwidth]{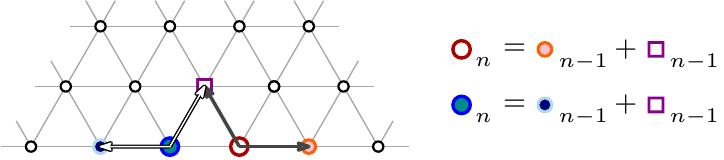}
\end{center}
\caption{Explanation of Equations \eqref{eq:firstpiece} and \eqref{eq:secondpiece} in generic case. A dot with a subscript~$n$ represents the number of forward paths of length $n$ starting from this point (which is, by Theorem 2, also the number of backward paths). }
\label{figure:recursion_explanation}
\end{figure}

For any $\ell\in\{1,\ldots,H-1\}$, starting from $\origin +\ell s_2$, the only possible forward steps are $s_1$ and $s_2$, so that
\begin{align}
\f_n(\ell)&=f_{n-1}(\origin+ \ell s_1 + s_2) + f_{n-1} (\origin + \ell s_1 +s_2) \nonumber \\
& = \f_{n-1}(\ell+1)+f_{n-1}(\origin +\ell s_1+s_2). \label{eq:firstpiece}
\end{align}
We now count backward paths starting from $\origin+ (\ell-1) s_1$. By Theorem~\ref{theo:first}, if $b_n(z)$ is the number of backward paths of length $n$ starting at $z$, we have $f_n(z)=b_n(z)$ for every $z \in \T_L$.
In particular, $\f_n(\ell - 1) = b_n(\origin + (\ell - 1) s_1)$. Since only possible backward steps from $\origin + (\ell - 1) s_1$ are $\bsa$ and $\bsc$, we have for any $\ell\in\{1,\ldots,H-1\}$,
\begin{align}
\f_n(\ell-1)&=
b_{n-1}(\origin +(\ell-1) s_1 + \bsa )+b_{n-1}(\origin + (\ell-1) s_1 + \bsc) \nonumber \\
&= f_{n-1}(\origin +(\ell-1) s_1 + \bsa )+f_{n-1}(\origin + (\ell-1) s_1 + \bsc) \nonumber \\
& = \f_{n-1}(\ell-2) + f_{n-1}(\origin+ \ell s_1 + (\bsc - s_1) ) \nonumber \\
&=\f_{n-1}(\ell-2) + f_{n-1}(\origin +\ell s_1+s_2) \label{eq:secondpiece}.
\end{align}
(Note that the case $\ell = 1$ is correctly handled since by convention, $\f_{n-1}(-1)=0$.) 
Combining \eqref{eq:firstpiece} and \eqref{eq:secondpiece}, we deduce that for $\ell\in\{1,\ldots,H-1\}$,
\[\f_n(\ell)-\f_n(\ell-1)=\f_{n-1}(\ell+1)-\f_{n-1}(\ell-2),\]
and hence
\[\D \f_n(\ell)=\D \f_{n-1}(\ell-1)+\D \f_{n-1}(\ell)+\D \f_{n-1}(\ell+1).\]

%\julien{Maybe the cases $\ell = 1$ and $\ell = 0$ are just special cases where $g_{n}(-1) = g_n(-2) = 0$?} Now, for $\ell=1$, using forward steps, we have
%\begin{align*}
%\D \f_n(1)&=\f_{n}(1)-\f_n(0)\\
%&=f_{n-1}(C_1+2s_2)+f_{n-1}(\origin+s_2+s_3)-f_{n-1}(C_1+s_2)\\
%&=f_{n-1}(C_1+2s_2)+f_{n-1}(C_1+\bsa)-f_{n-1}(C_1+s_2)\\
%&=\f_{n-1}(2)+\f_{n-1}(1)-\f_{n-1}(1)\\
%&=\D \f_{n-1}(2)+\D \f_{n-1}(1)+\D \f_{n-1}(0).
%\end{align*}

As for $\ell=0$, we straightforwardly have
\begin{align*}
\D \f_{n}(0)&=\f_n(0)=\f_{n-1}(1)\\
&=\D \f_{n-1}(0)+\D \f_{n-1}(1).
\end{align*}

\textbf{(i) Let us first assume that $L=2H+1$ is odd.} 
Then,
using a symmetry through the plan of equation $x_1 = x_3$ ($x_1$ being the coordinate in $e_1$ and $x_3$ the one in $e_3$), we have $f_{n-1}( \origin + H s_1   )=b_{n-1}(\origin + (H+1) s_1 )$. 
By Theorem~\ref{theo:first}, it translates $\f_{n-1}(H) = \f_{n-1}(H+1)$.   Thus, $\D \f_{n-1}(H+1)=0$, and
$$\D \f_{n}(H)=\D \f_{n-1}(H-1)+\D \f_{n-1}(H).$$
It follows that $(\D \f_{n}(\ell))_{0\leq \ell \leq H}$ satisfies the following recursion
$$
\begin{pmatrix}
\D \f_n(0)\\
\D \f_n(1)\\
\vdots\\
\vdots \\
\vdots\\
\D \f_n(H)\\
\end{pmatrix}
=
\begin{pmatrix}
1&1&0&\cdots&\cdots&0\\
1&1&1&\ddots&&\vdots\\
0&1&1&1&\ddots&\vdots\\
\vdots&\ddots&\ddots&\ddots&\ddots&0\\
\vdots&&\ddots&1&1&1\\
0&\cdots&\cdots&0&1&1\\
\end{pmatrix}
\begin{pmatrix}
\D \f_{n-1}(0)\\
\D \f_{n-1}(1)\\
\vdots\\
\vdots \\
\vdots\\
\D \f_{n-1}(H)\\
\end{pmatrix}
$$
which is the same recursion that we saw for $(\Mo{H}{n}{\ell})_{0\leq \ell \leq H}$. Since the base cases agree ($\D \f_0(\ell) = \Mo{H}{0}{\ell} = 0$ for $\ell > 1$, and $\D \f_0(0) = \Mo{H}{0}{0} = 1$),
% clearly satisfies the same recursion, and for $n=1$, we have
%$$\begin{pmatrix}
%\D \f_{1}(0)\\
%\D \f_1(1)\\
%\D \f_1(2)\\
%\vdots \\
%\D \f_1(H)\\
%\end{pmatrix}
%=
%\begin{pmatrix}
%\Mo{H}{1}{0}\\
%\Mo{H}{1}{1}\\
%\Mo{H}{1}{2}\\
%\vdots \\
%\Mo{H}{1}{H}\\
%\end{pmatrix}
%=
%\begin{pmatrix}
%1\\
%1\\
%0\\
%\vdots \\
%0\\
%\end{pmatrix}.
%$$
%Consequently, for any $n\in\N\setminus\{0\}$ and any $\ell\in\{0,\ldots,H\}$,
 we have the equality $\Mo{H}{n}{\ell}=\D\f_n(\ell)$, and the result directly follows.

 \textbf{(ii) Let us now assume that $L=2H$ is even.} Always thanks to the symmetry with respect the plane $x_1 = x_3$, we have $\f_{n-1}(H-1)=\f_{n-1}(H+1)$, so that $\D \f_{n-1}(H+1)+\D \f_{n-1}(H)=0$, and
\[\D \f_n(H)=\D \f_{n-1}(H-1).\]
It follows that $(\D \f_n(\ell))_{0\leq \ell \leq H}$ satisfies the following recursion
$$
\begin{pmatrix}
\D \f_n(0)\\
\D \f_n(1)\\
\vdots\\
\vdots \\
\vdots\\
\D \f_n(H)\\
\end{pmatrix}
=
\begin{pmatrix}
1&1&0&\cdots&\cdots&0\\
1&1&1&\ddots&&\vdots\\
0&1&1&1&\ddots&\vdots\\
\vdots&\ddots&\ddots&\ddots&\ddots&0\\
\vdots&&\ddots&1&1&1\\
0&\cdots&\cdots&0&1&0\\
\end{pmatrix}
\begin{pmatrix}
\D \f_{n-1}(0)\\
\D \f_{n-1}(1)\\
\vdots\\
\vdots \\
\vdots\\
\D \f_{n-1}(H)\\
\end{pmatrix}.
$$
We thus recover the recursion of $(\Mop{H}{n}{\ell})_{0\leq \ell \leq H}$, and we conclude like above.
%
%This time, the vector $(\Mop{H}{n}{\ell})_{0\leq \ell \leq H}$ clearly satisfies the same recursion, and for $n=1$, we have
%$$\begin{pmatrix}
%\D \f_1(0)\\
%\D \f_1(1)\\
%\D \f_1(2)\\
%\vdots \\
%\D \f_1(H)\\
%\end{pmatrix}
%=
%\begin{pmatrix}
%\Mop{H}{1}{0}\\
%\Mop{H}{1}{1}\\
%\Mop{H}{1}{2}\\
%\vdots \\
%\Mop{H}{1}{H}\\
%\end{pmatrix}
%=
%\begin{pmatrix}
%1\\
%1\\
%0\\
%\vdots \\
%0\\
%\end{pmatrix}
%$$
%Consequently, for any $n\in\N\setminus\{0\}$ and any $\ell\in\{0,\ldots,H\}$, $\Mop{H}{n}{\ell}=\D \f_n(\ell)$.
\end{proof}

\subsection{Exponential bijection}

We now convert the argument of Subsection~\ref{ss:equinumerosity} to a bijection, albeit one which is defined recursively and takes non-linear time to apply. 

%We first consider the case where the side length $L$ of the triange is odd, and we write $L=2H+1$. 

We fix in this section the length $L$ of the triangular lattice $\T_L$, and $H$ the semi-length: $H = \lfloor L/2\rfloor$.

Let $G_{n}(k)$ be the set of forward paths of length $n$ starting at $\origin + k s_{1}$ and let $M_{n}(k)$ be the set of Motzkin meanders of length $n$ starting at height $k$ and having amplitude bounded by $L$.
% and let $G_{n}(k)=F_{n}(\origin +k s_{1})$. 
%Then, by the definitions, $f_{n}(z)=|F_{n}(z)|$, $g_{n}(k)=|G_{n}(k)|$ and $\Mo{H}{n}{k}=|M_{n}(k)|$.
  
It follows from Proposition \ref{prop:motzkin_inductive} that $|M_{n}(k)|=|G_{n}(k)|-|G_{n}(k-1)|$. 

To show this bijectively, we will recursively define a sequence of bijective functions $\Omega_{n,k}:G_{n}(k)\to M_{n}(k)\cup G_{n}(k-1)$ for $n\in\mathbb{N}$ and $k\in[0,H]$. This will use the bijection of Theorem \ref{theo:directions} between triangular paths with different direction vectors. In particular, we will use this in the special cases sending paths with some direction vector $W$ of length $n$ to paths with direction vector $(F,\dots,F)$. We denote this function by $W_{n}$ -- this forms a bijection when the domain is restricted to those paths with some explicit direction vector.

\begin{figure}
\begin{center}
\includegraphics[width = \textwidth]{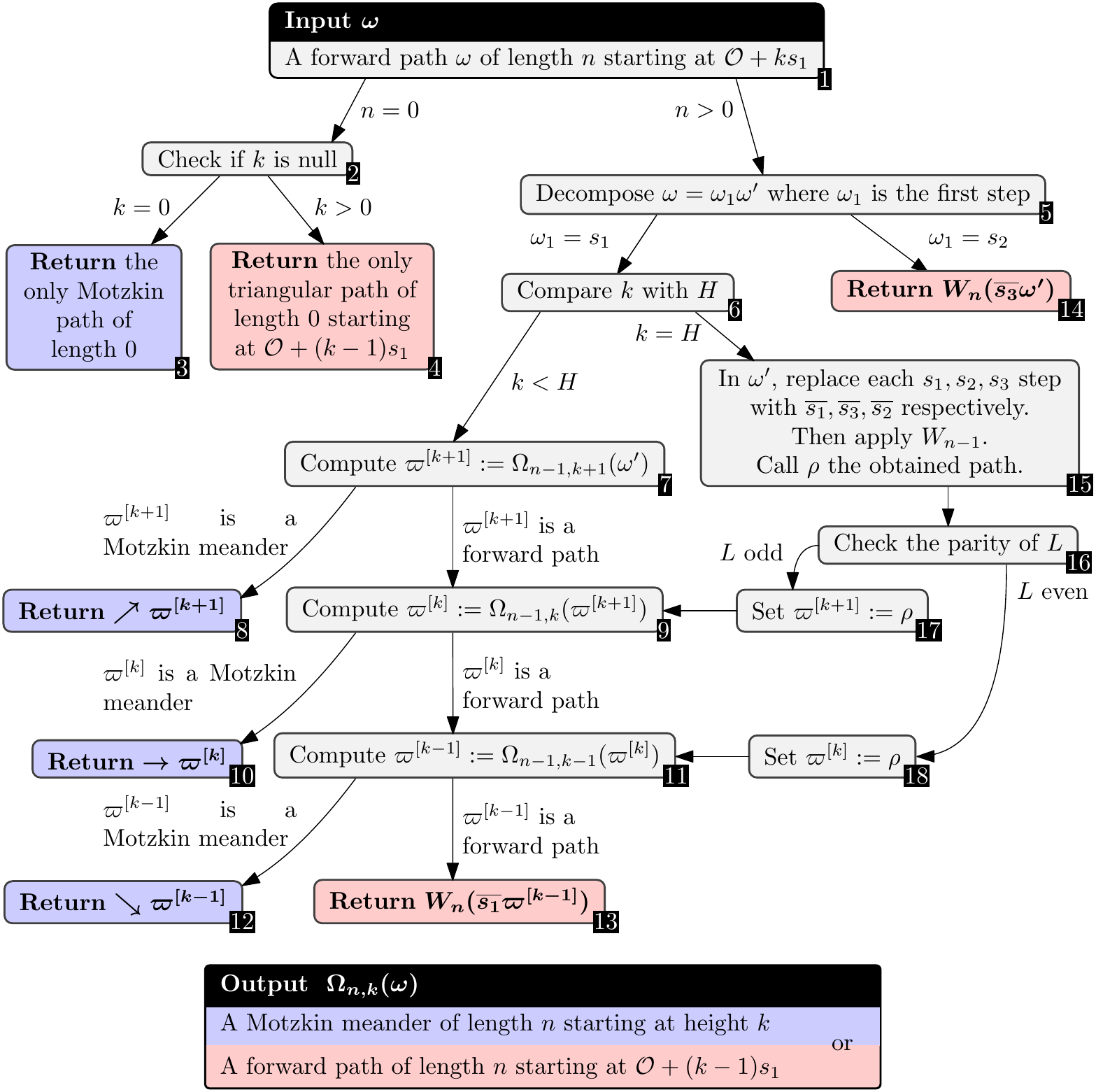}
\end{center}
\caption{Algorithm computing  $\Omega_{n,k}(\omega)$ where $\omega$ is a path of length $n$ starting at $\origin + k s_1$}
\label{figure:Omega}
\end{figure}

\begin{Theorem}
Let $k$ and $n$ be two integers with $k \leq H$. The function
$\Omega_{n,k}$, defined by Figure~\ref{figure:Omega},
is a bijection  from $G_{n}(k)$ to $M_{n}(k)\cup G_{n}(k-1)$, where $G_{n}(k)$ is the set of forward paths of length $n$ starting at $\origin +k s_{1}$, and $M_{n}(k)$ is the set of Motzkin meanders of length $n$ starting at height $k$ and having amplitude bounded by $L$.
\label{theo:Omega}
\end{Theorem}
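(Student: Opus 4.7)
My plan is to prove Theorem~\ref{theo:Omega} by strong induction on the length $n$, leveraging the cardinality identity $|G_n(k)|=|M_n(k)|+|G_n(k-1)|$ already established in Proposition~\ref{prop:motzkin_inductive}. Once the algorithm is shown to be well-defined and injective, surjectivity follows automatically from counting. The base case $n=0$ is trivial: the unique empty path is sent to the empty Motzkin meander (if $k=0$) or to the empty forward path in $G_0(k-1)$ (if $k\geq 1$).

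For the inductive step, I would mirror the decomposition used in the proof of Proposition~\ref{prop:motzkin_inductive}. Every $\omega\in G_n(k)$ begins with either $s_1$ or $s_2$, yielding
\[G_n(k)=\{s_1\}\cdot G_{n-1}(k+1)\ \sqcup\ \{s_2\}\cdot F_{n-1}(P_k),\]
where $P_k=\origin+ks_1+s_2$ and $F_{n-1}(P_k)$ denotes the set of forward paths of length $n-1$ starting at $P_k$. Applying the direction-vector bijection of Theorem~\ref{theo:directions} to paths from $\origin+(k-1)s_1$ of direction vector $(B,F,\dots,F)$, then splitting on the first backward step, produces a dual decomposition
\[G_n(k-1)\ \leftrightarrow\ \{\bsa\}\cdot G_{n-1}(k-2)\ \sqcup\ \{\bsc\}\cdot F_{n-1}(P_k),\]
in which $F_{n-1}(P_k)$ appears as a common summand (since $\origin+(k-1)s_1+\bsc=P_k$). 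I would therefore send any $\omega$ beginning with $s_2$ to its direct counterpart in the $\bsc$-branch of $G_n(k-1)$ via the explicit bijection of Section~\ref{s:symmetry}. For $\omega$ beginning with $s_1$, let $\omega'\in G_{n-1}(k+1)$ be the tail and invoke $\Omega_{n-1,k+1}(\omega')$; if its output is a meander in $M_{n-1}(k+1)$, prepend $\nearrow$ to obtain an element of $M_n(k)$. Otherwise the output lies in $G_{n-1}(k)$, and we cascade with $\Omega_{n-1,k}$ and then $\Omega_{n-1,k-1}$, prepending $\rightarrow$ or $\searrow$ respectively when a meander is produced, and otherwise finishing in the $\bsa$-branch of $G_n(k-1)$. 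This cascade exactly encodes the Motzkin recurrence $|M_n(k)|=|M_{n-1}(k-1)|+|M_{n-1}(k)|+|M_{n-1}(k+1)|$.

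Injectivity follows from this recursive structure: the type of the output (meander versus forward path), combined with the first letter and the induction hypothesis on each sub-call, uniquely reconstructs the branch taken, hence $\omega$. The main obstacle I anticipate is bookkeeping at the boundaries of the $k$-range: at $k=0$ the step $s_2$ is forbidden and the decomposition degenerates, while at $k=H$ the nominal call $\Omega_{n-1,k+1}$ refers to an out-of-range index. Both exceptions are resolved by the symmetries $\f_{n-1}(H)=\f_{n-1}(H+1)$ or $\f_{n-1}(H-1)=\f_{n-1}(H+1)$ (depending on the parity of $L$) used at the end of the proof of Proposition~\ref{prop:motzkin_inductive}, together with the amplitude-at-maximal-height distinction that separates the two parities. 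Propagating these boundary rules through the recursion is the only delicate point, but the pattern follows exactly the one already handled analytically in the previous subsection.
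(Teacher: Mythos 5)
Your reconstruction of $\Omega_{n,k}$ is essentially the algorithm of Figure~\ref{figure:Omega}: the same cascade of calls $\Omega_{n-1,k+1}$, $\Omega_{n-1,k}$, $\Omega_{n-1,k-1}$ prepending $\nearrow$, $\rightarrow$, $\searrow$, and the same use of the direction-vector bijection of Theorem~\ref{theo:directions} to send the residual $G_{n-1}(k-2)$ branch and the $s_2$-initial paths to the $\bsa$- and $\bsc$-branches of $G_n(k-1)$ (blocks 13 and 14). Your only real deviation is to obtain surjectivity from the counting identity of Proposition~\ref{prop:motzkin_inductive}, whereas the paper inverts the map directly by reversing the flow-chart arrows; that shortcut is legitimate since the union is disjoint and finite.

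The genuine gap is the case $k=H$, which you discharge by citing the symmetries $\f_{n-1}(H)=\f_{n-1}(H+1)$, resp.\ $\f_{n-1}(H-1)=\f_{n-1}(H+1)$. These are equalities of cardinalities; a bijective proof must realize them by an explicit operation on paths, and this is exactly where the paper does its nontrivial work (block 15): when $k=H$ and $\omega$ begins with $s_1$, its tail $\omega'$ starts at $\origin+(H+1)s_1$, outside the domain of every $\Omega_{n-1,\cdot}$, so the paper reflects $\omega'$ through the midline $x_1=x_3$ (replacing $s_1,s_2,s_3$ by $\bsa,\bsc,\bsb$), turning it into a backward path from $\origin+Hs_1$ ($L$ odd) or $\origin+(H-1)s_1$ ($L$ even), and then applies $W_{n-1}$ to obtain a forward path $\rho$ on which the cascade resumes. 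Without specifying this step your recursion literally invokes the undefined $\Omega_{n-1,H+1}$. In addition, well-definedness of the amplitude bound at $k=H$ requires the parity-dependent pruning of the cascade: for $L=2H+1$ the boundary cascade has no $\nearrow$ branch but may prepend $\rightarrow$ at height $H$ (amplitude $2H+1=L$), while for $L=2H$ both $\nearrow$ and $\rightarrow$ must be suppressed and the cascade starts at the $\searrow$ level. You allude to ``the amplitude-at-maximal-height distinction,'' but these rules, together with the check that $\rho$ indeed lies in $G_{n-1}(H)$ or $G_{n-1}(H-1)$, are precisely the content of the paper's verification, and they are also needed to make your injectivity argument unambiguous at $k=H$ (the paper's remark that one reaches blocks 17--18, rather than blocks 7 and 9, exactly when $k=H$).
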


\begin{proof}\textbf{
1. Let us show that the map is well-defined, i.e. its image is included in $M_{n}(k)\cup G_{n}(k-1)$.
}

It is quite straighforward, except maybe two points:
\begin{itemize}
\item
Why is path $\rho$ from block $15$ of Figure~\ref{figure:Omega} a valid triangular path of $\T_L$? By replacing $s_{1},s_{2},s_{3}$ steps with $\bsa, \bsc, \bsb$, path $\omega'$ undergoes a vertical reflection about the vertical midline of $\T_L$. Thus, $\omega'$ is transformed into a backward path starting at $\origin + H s_1$ (if $L$ is odd) or at $\origin + (H-1) s_1$ (if $L$ is even). Applying $W_{n-1}$ makes it a forward path, which is $\rho$, that belongs to $G_{n-1}(H)$ (when $L$ is odd) or $G_{n-1}(H-1)$ (when $L$ is even). 

\item When $k = H$, it is impossible to output a Motzkin meander starting at height $H$ and beginning by a $\nearrow$ step. So the amplitude of every meander in the image is bounded by $2H+1$. Moreover, when $L$ is even and $k = H$, the returned meanders cannot begin by a horizontal step, which explains why they have amplitude bounded by $L = 2H$.
\end{itemize}

\noindent\textbf{ 2. Let us show by induction on $n$ that $\Omega_{n,k}$ is a bijection for every $k \geq 0$. }
 
The case $n=0$ is clear.

Let $n$ be a positive integer. If the image is a Motzkin meander beginning by $\nearrow$ (resp. $\rightarrow$, $\searrow$), then the algorithm must end at block $8$ (resp. $10$, resp. $12$). This covers all Motzkin paths of $M_n(k)$ (or $M'_n(k)$). Then we can bijectively recover the original path $\omega$ by following the arrows backwards up to block $1$. 
In fact, all the arrows are reversible, notably because of the induction hypothesis. 
There is no ambiguity from blocks $9$ and $12$ (where there are \textit{a priori} two possible ingoing arrows) because one can only go to block $7$ and $9$  if $k < H$. In the contrary case where $k = H$, one have to go to the right side of the diagram (blocks 17 and 18).  

If the image is in $G_{n}(k-1)$, then the algorithms ends either to block 13 or to block 14. Since $W_{n}$ is a bijection from paths with direction vector $(B,F,F,\dots,F)$ to forward paths in $G_{n}(k-1)$, we can recover the preimage under $W_n$. If this preimage begins by $\bsa$, then the algorithm actually ended at block $13$; if it begins by $\bsc$, the algorithm ended at block $14$. At this point, we can use the above reasoning to go backwards to the root of the decision tree and find $\omega$. Thus, we prove that $\Omega_{n,k}$ is a bijection.
\end{proof}

When $k=0$, Theorem~\ref{theo:Omega} provides a bijection between forward paths and Motzkin paths of bounded amplitude.  
Go back to Figure~\ref{figure:exponential-bijection} for examples:  each forward path is put aside its image under $\Omega_{3,0}$.

Thus, at this point, we have answered Mortimer and Prellberg's open question (Theorem~\ref{theo:mortimerprellberg}). Indeed, starting from a bicolored Motzkin path $m$ (let us say in black and white) of length $n$ and of amplitude bounded by $L$, we can construct a direction vector $W$ from it: write $F$ for each black step; $B$ for each white step. Then, we compute $\Omega_{n,0}(m)$, which is a forward path. Finally, we use the bijection from Theorem~\ref{theo:directions} to transform the forward path into a triangular path of direction vector $W$.

Finally, let us  discuss about the complexity of the algorithm. If $c(n,k)$ denotes the worst-case complexity of $\Omega_{n,k}$, then we can derive from Figure~\ref{figure:Omega} the (rough) upper bound
\[c(n,k) \leq c(n-1,k+1) + c(n-1,k) + c(n-1,k-1) + n^2.\]
(The $n^2$ term reflects the complexity of the function $W_{n-1}$ appearing in block $15$.) Then, by a simple induction, one can see that $c(n,k) \leq \Mo H n k + O(n^3)$ where $\Mo H n k$ is the number of Motzkin meanders of length $n$ starting at height $k$ and having amplitude bounded by $L$. Since $\Mo H n 0$ is $O(3^n)$, we deduce that the complexity of $\Omega_{n,0}$ is bounded by an exponential in $n$. However, we do not know if this bound is tight. Experimentally, we have observed that the complexity of the algorithm has a large standard deviation when the input is randomly chosen: in most cases, the complexity is linear in $n$ (in terms of running time and the number of recursive calls) but sometimes the complexity seems to be quadratic in $n$.

\section{Many other bijections}
\label{s:scaffolding}

In the previous section, we described a bijection between forward paths and Motzkin paths of bounded amplitude. However, the definition being recursive, the computation of an image takes \textit{a priori} a long time, and its description lacks some clarity. 

This section proposes a new way to define bijections between forward paths and Motzkin paths.
Such bijections will have a double advantage. First, they only require linear time to compute. Second, these bijections are parameterized: each one of them comes with a specific metadata (which we name \textit{scaffolding}), making them all different. 

\subsection{Profile}

We start to define a integer vector for each point of $\T_L$:

\begin{Definition}[Profile]
Let $z = i e_1 + j e_2 + k e_3$ be any point of $\T_L$. The \emph{profile} of $z$ is the vector $(p_0(z),\dots,p_H(z))$ where $H = \left \lfloor { \frac L 2 } \right\rfloor$ and $p_0(z),\dots,p_H(z)$ is the first half of the coefficients of the polynomial
\[ \frac{(1 - x^{i+1})(1 - x^{j+1})(1- x^{k+1})}{(1-x)^2} =  p_0(z) + p_1(z) x + \dots + p_H(z) x^H + \dots + p_{L+1}(z) x^{L+1}.\]
\label{def:profile}
\end{Definition}

\begin{figure}
\begin{center}
\includegraphics[scale = 1.5]{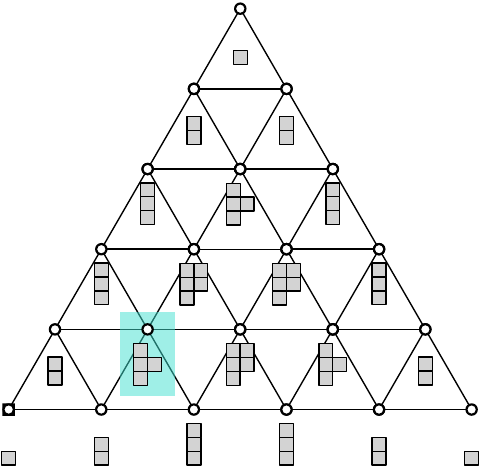}
\end{center}
\caption{A cell representation of $\T_5$. The enlighten zone corres\-ponds to point $e_1 + e_2 + 3 e_3$.}
\label{figure:profile}
\end{figure}

\begin{Example}
Fix $L=5$. The profile of any corner  of $\T_5$ (that is $5e_1$, $5 e_2$ or $5 e_3$) is $(1,0,0)$ since the corresponding polynomial is $(1-x^6)$ (regardless of the corner). The profile of the point $ e_1 + e_2 + 3 e_3$ is $(1,2,1)$, which can be found by expanding the polynomial $(1-x^2)^2(1-x^4)/(1-x)^2  =  1 + 2 x + x^2 - x^4 - 2 x^5 -x^6.$
\end{Example}

Note that one can also extend the definition of profile for  points $i e_1 + j e_2 + k e_3$ where $i=-1$ or $j=-1$ or $k = -1$. Even if they are not in $\T_L$, we can see that the polynomial $\frac{(1 - x^{i+1})(1 - x^{j+1})(1- x^{k+1})}{(1-x)^2}$ is null for such points, so by convention, we can define the profile as the null vector $(0,\dots,0)$.
It will be useful to deal with border cases.

It is convenient to represent the profiles as sets of square cells. 

\begin{Definition}[Cell representation]
A \emph{cell representation of a point $z$} is a finite subset $\C(z)$ of $\Z^2$ satisfying $\left|\{ \ell \ : \ (f,\ell) \in \C(z)\}\right| = p_f(z)$ for every $f \in \{0,\dots,H\}$. A \emph{cell representation of $\T_L$} is a family $\C=(\C(z))_{z \in \T_L}$ of cell representations of points of $\T_L$. The \emph{height} of a cell $c=(f,\ell)$ is defined as  $h(c)=f$. 
\label{def:cell_representation}
\end{Definition}

The profile of every point $z$ is then illustrated by the cell representation $\C(z)$: for every $(f,\ell) \in \C(z)$, a square is placed at coordinates $(\ell,f)$.\footnote{We swap the two coordinates so that $f$ (which stands for \textit{floor}) corresponds to the height of a cell, consistent with the fact that $f$ represents the height in a Motzkin path.} 
For example, as shown by Figure~\ref{figure:profile}, the cell representation of $e_1 + e_2 + 3 e_3$ in $\T_5$ (whose profile is $(1,2,1)$, as mentioned above) can be represented as three rows of squares: the first (bottom) and the third (top) rows have $1$ square each while the central row has $2$ squares.

It is not obvious from Definition~\ref{def:profile} that we always have $p_f(z) \geq 0$, and hence that a cell representation of $\T_L$ exists for every $L \in \N$. However a cell representation of $\T_L$ will be explicitly given by Proposition~\ref{prop:cell_rep}, proving the non-negativity of the components of a profile.

%\irene{I suggest introducing (before the definition of scaffolding) the notion of ``cell representation'': we could say that a \emph{cell representation of a point $z$} (or \emph{of the profile of $z$}?) is a finite set $\C(z)\subset \Z^2$ satisfying $\forall i\in\N, \card\{c\in S : h(c)=i\} = p_i(z)$, where $h$ is defined by $h:\Z^2 \to \Z, (x,y)\mapsto y$, and that a \emph{cell representation of $\T_L$} is a family $S=(S(z))_{z\in\T_L}$ of cell representations of points of $\T_L$. We could then say that the $\C$ above defines a cell representation of $\T_L$ (and at the beginning of Julien's section, the profile gives another simple description of a cell representation of $\T_L$, for which the labels $\ell$ always go from $1$ to $p_f(z)$.}\andrew{Sounds good to me!}

The next lemma establishes some identities about the profile.

\begin{Lemma}Let $z$ be in $\T_L$.
Then for $i \in \{1,\dots,H-1\},$ the identities
\begin{align}
p_i(z+s_1)+p_i(z+s_2) + p_i(z+s_3) &= p_{i-1}(z) + p_i(z) + p_{i+1}(z), \label{eq:pi} \\
p_0(z+s_1)+p_0(z+s_2) + p_0(z+s_3) &= p_0(z) + p_1(z), \label{eq:p0} \\
p_H(z+s_1)+p_H(z+s_2) + p_H(z+s_3) &= \left \{\begin{array}{ll}
p_H(z) + p_{H-1}(z) & \textrm{ if }L\textrm{ is odd}\\ p_{H-1}(z) & \textrm{ if }L\textrm{ is even} \label{eq:pH}
\end{array} \right. ,
\end{align}
hold.
\label{lem:identity_profile}
\end{Lemma}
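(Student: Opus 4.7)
The plan is to prove all three identities at once by lifting them to a generating function identity on the full polynomial
\[ P_z(x) := \frac{(1-x^{i+1})(1-x^{j+1})(1-x^{k+1})}{(1-x)^2}, \qquad z = ie_1 + je_2 + ke_3, \]
and then reading off each case by extracting a coefficient. Let $\tilde p_n(z)$ denote the coefficient of $x^n$ in $P_z(x)$ for every integer $n$, so that $\tilde p_n(z) = p_n(z)$ for $0 \leq n \leq H$ and $\tilde p_n(z) = 0$ outside $\{0,\ldots,L+1\}$. The convention that $P_z = 0$ whenever one of $i, j, k$ equals $-1$ is automatic, since the corresponding factor $1-x^0$ vanishes.

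The main step, and the chief obstacle, is to establish the master identity
\[ P_{z+s_1}(x) + P_{z+s_2}(x) + P_{z+s_3}(x) = (1 + x + x^{-1})\, P_z(x) + x^{L+2} - x^{-1}. \qquad (\star) \]
Setting $a = x^{i+1}$, $b = x^{j+1}$, $c = x^{k+1}$, the three summands have numerators $(1-ax)(1-b)(1-c/x)$, $(1-a/x)(1-bx)(1-c)$, and $(1-a)(1-b/x)(1-cx)$. Expanding and collecting by monomial, the linear terms $a, b, c$ and the quadratic terms $ab, ac, bc$ each acquire a factor $1 + x + x^{-1}$ (by symmetry under cyclic permutation of the roles of $a,b,c$ combined with the three possible rescalings of $x$). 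Using $(1-a)(1-b)(1-c) = 1 - (a+b+c) + (ab+ac+bc) - abc$, the numerator sum telescopes to
\[ (1 + x + x^{-1})(1-a)(1-b)(1-c) + (abc - 1)\frac{(1-x)^2}{x}. \]
Dividing by $(1-x)^2$ and using $abc = x^{L+3}$ yields $(\star)$.

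Now I would extract the coefficient of $x^n$ in $(\star)$ for $0 \leq n \leq L+1$: in that window the correction $x^{L+2} - x^{-1}$ contributes nothing, and the factor $(1 + x + x^{-1})$ shifts indices, producing the uniform recurrence
\[ \tilde p_n(z+s_1) + \tilde p_n(z+s_2) + \tilde p_n(z+s_3) = \tilde p_{n-1}(z) + \tilde p_n(z) + \tilde p_{n+1}(z). \]
For $n \in \{1, \ldots, H-1\}$ this is literally \eqref{eq:pi}, and for $n = 0$ it specializes to \eqref{eq:p0} since $\tilde p_{-1}(z) = 0$. For $n = H$, only $\tilde p_{H+1}(z)$ lies outside the profile; I would handle it via the antisymmetry $\tilde p_n(z) = -\tilde p_{L+1-n}(z)$, a direct consequence of the functional equation $P_z(x^{-1}) = -x^{-(L+1)}\, P_z(x)$ obtained by substituting $x \mapsto x^{-1}$ in the definition of $P_z$. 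When $L = 2H$ is even this gives $\tilde p_{H+1}(z) = -p_H(z)$, collapsing the right-hand side to $p_{H-1}(z)$; when $L = 2H+1$ is odd, applying the antisymmetry to $n = H+1$ forces $\tilde p_{H+1}(z) = 0$, and the right-hand side becomes $p_{H-1}(z) + p_H(z)$. Both cases of \eqref{eq:pH} follow, completing the proof.
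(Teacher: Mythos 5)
Your proposal is correct and follows essentially the same route as the paper: both prove the master generating-function identity $P_{z+s_1}(x)+P_{z+s_2}(x)+P_{z+s_3}(x)=(1+x+x^{-1})P_z(x)+x^{L+2}-x^{-1}$, extract coefficients to get \eqref{eq:pi} and \eqref{eq:p0}, and settle \eqref{eq:pH} via the antisymmetry $x^{L+1}P_z(1/x)=-P_z(x)$, i.e.\ $p_{L+1-j}(z)=-p_j(z)$, with the same odd/even case split. Your telescoping computation of the numerator sum is a correct (and slightly more explicit) verification of the expansion the paper leaves as a routine check.
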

\begin{proof}
For $z = i e_1 + j e_2 + k e_3 \in \T_L$, let $Pol_z(x)$ be the polynomial of Definition~\ref{def:profile}, that is
\[Pol_z(x) = \frac{(1 - x^{i+1})(1 - x^{j+1})(1- x^{k+1})}{(1-x)^2}. \]
We also extend for any integer $i$ the definition of $p_i(z)$ as the coefficient of $x^i$ in $Pol_z(x)$.

By an inelegant but simple expansion, one can check the identity
\[Pol_{z+s_1}(x)+Pol_{z+s_2}(x)+Pol_{z+s_3}(x) = \left(x + 1 + \frac 1 x\right) Pol_z(x) + x^{L+2}  - \frac 1 x.\] 
Extracting the coefficient of $x^i$ in the above equality for $i \in \{0,\dots,H\}$ straightforwardly gives
 \[p_i(z+s_1)+p_i(z+s_2) + p_i(z+s_3) = p_{i-1}(z) + p_i(z) + p_{i+1}(z),\]
 which proves \eqref{eq:pi}. The equality \eqref{eq:p0} comes from the fact that $p_{-1}(z) = 0$.
 
Concerning $i = H$, we remark that
\[x^{L+1}Pol_z(1/x)= -Pol_z(x),\]
and hence $p_{L+1-j}(z) = -p_j(z)$ for every integer $j$.
In particular, if $L = 2H+1$, then for $j = H + 1$, we have $p_{H+1}(z) = -p_{H+1}(z)$ and so $p_{H+1}(z) = 0$. Equality \eqref{eq:pH} is then obtained by substituting $i=H$ and $p_{i+1} = 0$ in \eqref{eq:pi}.
As for $L = 2H$ even, set $j=H$, and get $p_{H+1}(z) = -p_H(z)$, which implies that only the term $p_{H-1}(z)$ does not disappear in the right-hand side of the equality.
\end{proof}

Thus, Proposition~\ref{prop:motzkin_inductive} is naturally extended to any point of $\T_L$ (not only the ones on the border).

\begin{Theorem}
Let $z$ be any point of $\T_L$ and $(p_0(z),\dots,p_H(z))$ be the profile of $z$. Let us denote $f_n(z)$ the number of forward paths in $\T_L$ starting from $z$. We have
\[f_n(z)= \sum_{i=0}^{H} p_i(z) \Mo{H}{n}{i},\]
where $\Mo H n i$ is the number of Motzkin meanders of length $n$ starting at height $i$ and having an amplitude bounded by $L$.
\label{theo:anywhere}
\end{Theorem}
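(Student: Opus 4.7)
My approach is induction on $n$, using the obvious decomposition $f_n(z) = \sum_{\sigma \in \Sf} f_{n-1}(z+\sigma)$ and matching it term-by-term against the Motzkin meander recurrence recalled at the beginning of the proof of Proposition~\ref{prop:motzkin_inductive} via the profile identities of Lemma~\ref{lem:identity_profile}.

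The base case $n=0$ reduces to checking $p_0(z) = 1$ for every $z \in \T_L$: writing
\[ Pol_z(x) = (1 + x + \cdots + x^i)(1 + x + \cdots + x^j)(1 - x^{k+1}) \]
(which is valid for $z = ie_1 + je_2 + ke_3 \in \T_L$), the constant term on the right is $1$, and meanwhile $\Mo{H}{0}{\ell} = [\ell=0]$.

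For the inductive step, I would write
\begin{align*}
f_n(z) \;=\; \sum_{\sigma \in \Sf} f_{n-1}(z+\sigma) \;=\; \sum_{i=0}^H \Bigl[\,\sum_{\sigma \in \Sf} p_i(z+\sigma)\,\Bigr] \Mo{H}{n-1}{i},
\end{align*}
where the induction hypothesis is applied to each $f_{n-1}(z+\sigma)$ before swapping the two sums. Neighbours $z+\sigma$ that leave $\T_L$ are precisely the points with a $-1$ coordinate discussed just after Definition~\ref{def:profile}; their profiles vanish, so they quietly drop out of the inner sum, and consistency with $f_{n-1}(z+\sigma) = 0$ there is automatic. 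Lemma~\ref{lem:identity_profile} then evaluates each inner bracket as a three-term combination of $p_{i-1}(z)$, $p_i(z)$, $p_{i+1}(z)$. Collecting instead by the coefficient of each $p_i(z)$ transforms the double sum into $\sum_{i=0}^H p_i(z) R_i$ with $R_i$ a specific combination of $\Mo{H}{n-1}{i-1}$, $\Mo{H}{n-1}{i}$, $\Mo{H}{n-1}{i+1}$. The desired conclusion $R_i = \Mo{H}{n}{i}$ is then exactly the Motzkin recurrence that was stated at the start of the proof of Proposition~\ref{prop:motzkin_inductive}.

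I expect the one delicate point to be the bookkeeping at the top index $i = H$, where the parity of $L$ decides whether a horizontal step at maximal height is permitted. The same parity dichotomy already appears in~\eqref{eq:pH} and in the Motzkin recurrence, so the matching is in principle automatic. To keep the induction uniform I would adopt the convention $p_{H+1}(z) = 0$ when $L$ is odd and $p_{H+1}(z) = -p_H(z)$ when $L$ is even, both of which follow from the palindrome relation $x^{L+1}Pol_z(1/x) = -Pol_z(x)$ used in the proof of Lemma~\ref{lem:identity_profile}. With this convention, the cases $i = 0$, $1 \leq i \leq H-1$, and $i = H$ are all instances of the same three-term identity, and no separate argument is needed for the boundaries. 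This organisational step, rather than any deeper obstruction, is the one piece I would write out most carefully.
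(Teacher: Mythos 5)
Your proposal is correct and follows essentially the same route as the paper's proof: induction on $n$ via the one-step decomposition, applying the induction hypothesis, invoking Lemma~\ref{lem:identity_profile}, and regrouping by $p_i(z)$ to recover the Motzkin meander recurrence. The only difference is cosmetic: the paper writes out the odd case and declares the even case analogous, whereas you treat both parities uniformly using the palindrome facts $p_{H+1}(z)=0$ (odd $L$) and $p_{H+1}(z)=-p_H(z)$ (even $L$), which are exactly what the paper establishes inside the proof of that lemma.
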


\begin{proof}
We only do the proof for the odd case, since the even case is very similar. We proceed to an induction on $n$. 

For $n=0$, we have $p_0(z)=1$ since it is the constant term in the polynomial $\frac{(1 - x^{i+1})(1 - x^{j+1})(1- x^{k+1})}{(1-x)^2}$. Moreover, $\Mo H 0 i$ is equal to $0$ if $i > 0$, and $\Mo H 0 0  = 1$. We consistently find $f_0(z) = 1$.

Let us assume that the equality holds for a given $n$ and for every $z' \in \T_L$. We have
\begin{align*}
f_{n+1}(z) &= f_{n}(z + s_1) + f_{n}(z + s_2) + f_{n}(z + s_3) \\
& =  \sum_{i=0}^H \left( p_i(z+s_1)+p_i(z+s_2) + p_i(z+s_3)  \right) \Mo H {n} i & \textrm{by induction,}\\
& =  \sum_{i=1}^{H - 1} \left( p_{i-1}(z)+p_i(z) + p_{i+1}(z)  \right) \Mo H {n} i \\ & + (p_0(z) + p_1(z)) \Mo H {n} 0 + (p_{H-1}(z) + p_H(z)) \Mo H {n} H  & \textrm{by Lemma~\ref{lem:identity_profile}.}
\end{align*}
Collecting terms with respect to $p_i(z)$, we get
\begin{multline*}
f_{n+1}(z) = p_0(z) \left( \Mo H {n} 0 + \Mo H {n} 1 \right) \\ + \sum_{j = 1}^{H-1} p_j(z) \left( \Mo H {n} {j-1} + \Mo H {n} {j}  + \Mo H {n} {j+1} \right) \\ 
 +  p_H(z) \left( \Mo H {n} {H-1} + \Mo H {n} H\right),
\end{multline*}
which reads $f_{n+1}(z) = \sum_{j = 0}^H p_j(H) \Mo H {n+1} j$.
\end{proof}

Let us explain why Proposition~\ref{prop:motzkin_inductive} is a special case of the previous theorem. Given a point of the border $\origin + \ell s_1 = s_1 e_1 + (L-\ell) e_3$  with $\ell \leq H = \lfloor L/2 \rfloor$, the associated polynomial is 
\[\frac{(1-x^{\ell+1})(1-x^{L - \ell + 1})}{1-x} = \left( 1 + x + \dots + x^\ell \right) (1- x^{L-\ell+1}).\]
But since $\ell \leq H$, we have $L - \ell + 1 > H$. So the profile of  $\origin + \ell s_1$ follows the expansion of $1 + x + \dots + x^\ell$. In other words, 
\[p_i(\origin + \ell s_1) = \left\{\begin{array}{cl}
1 & \textrm{ if }i \leq \ell \\
0 & \textrm{ otherwise}
\end{array}
\right. .
\]
We thus recover the formula $f_n(\origin+\ell s_1)=\sum_{i=0}^{\ell} \Mo{H}{n}{i}$.

\subsection{Scaffoldings and new bijections}

In order to illustrate the following definition, we begin this subsection by explaining the idea behind the bijection we are going to present next.

\begin{figure}
\begin{center}
\includegraphics[scale = 1.6]{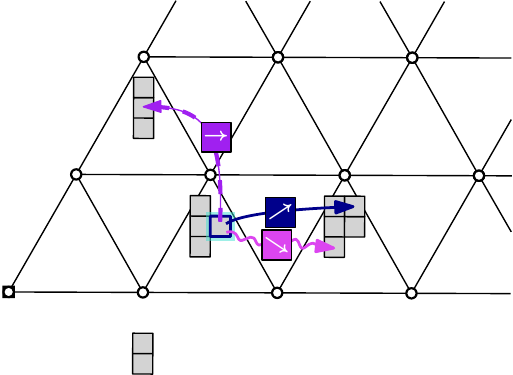}
\end{center}
\caption{A zoom on a scaffolding -- more specifically it depicts the function $s \mapsto \delta_{e_1+ e_2 + 3 e_3}( (1,2),s)$.}
\label{figure:introduction_scaffolding}
\end{figure}

By Theorem~\ref{theo:anywhere}, we know there should be a bijection between the set of triangular paths starting at $z \in T_L$ and the set of triplets $(m,c)$ where $m$ is a Motzkin meander of bounded amplitude and $c$ is a cell in the cell representation of $z$ such that $h(c)$ is the starting height of $m$. 
%The latter set is more visually the set of pairs $(m,c)$ where $m$ is a Motzkin path and $c$ is a cell from the profile of $z$ (the pair $(f,\ell)$ denoting the coordinates of $c$).
For the sake of example, let us choose $L = 5$, $z = e_1 + e_2 + 3 e_3$, $c = (f,\ell) = (1,2)$. It corresponds to a specific cell of the profile of $z$, which is highlighted in Figure~\ref{figure:introduction_scaffolding}. 

We now consider a Motzkin path $m$ which we wish to transform into a triangular path starting at $z$, in a recursive manner. This transformation will depend on the cell we have chosen (here $(1,2)$).  At this point there are naturally three  possibilities: $m$ begins by $\nearrow$, by $\rightarrow$, or by $\searrow$. The idea is then to map these three possibilities to three other cells located in the profiles of the neighbors of $z$. The $f$-coordinates of these cells must be respectively $2$, $1$ and $0$. We then use a recursion, which now depends on the new cell, to find the {desired} triangular path. 

Of course there are several choices for these new cells. For example, if $m$ begins by $\nearrow$, we have $3$ choices: there are $2$ cells in the top floor of $z+s_1$, $1$ cell in the top floor of $z+s_2$, and $0$ cell in the top floor of $z+s_3$. Following Figure~\ref{figure:introduction_scaffolding}, we choose the cell $(2,2)$ from the cell representation of $z+s_1$. The triangular path we would like to output will begin by $s_1$ (because the chosen cell is in the profile of $z + \boldsymbol{s_1}$), and the rest will be computed by recursion.

A \textit{scaffolding} is precisely the data which dictates the choice of the new cells for the whole lattice. More precisely, it indicates in which cell we have to go when we consider a specific cell in some profile, and a particular step in $\{\nearrow,\rightarrow,\searrow\}$.

\begin{Definition}[Scaffolding] Let us fix $L$ the size of the triangular lattice, and let $H$ be $\lfloor L/2 \rfloor$.

For a height $f \in \{0,\ldots,H\}$, we say that a step $s\in\{\nearrow,\rightarrow,\searrow\}$ is an \emph{allowed step} from height $f$ if it is a possible step from height $f$ in a Motzkin meander. Precisely, the only restrictions are that $(f,s)$ cannot be equal to $(0,\searrow)$ nor $(H,\nearrow)$, and furthermore, if $L$ is even, $ (f,s)$ cannot be equal to $(H,\rightarrow)$.

For $z\in \T_L$, we define the set 
\[A(z):=\{(c,s)\in \C(z)\times \{\nearrow,\rightarrow,\searrow\} : s \mbox{ is an allowed step from } h(c)\},\]
where $\C(z)$ is  the cell representation of $z$ (see Definition~\ref{def:cell_representation}).
For $i\in\{1,2,3\}$, we also introduce the notation
\[\C_i(z):= \{ (s_i,c) : c \in \C(z)\}.\]
The set $\C_i(z)$ is thus a subset of $\Sf\times \C(z)$, having same cardinality as $\C(z)$, since all the elements of $\C_i(z)$ have the same first coordinate $s_i$.

A \emph{scaffolding} is a collection of functions $(\delta_z)_{z \in \T_L}$, such that for each $z\in \T_L$, the function 
$$\delta_z : A(z) \to \C_1(z+s_1)\cup \C_2(z+s_2) \cup \C_3(z+s_3)$$
is a bijection. Furthermore, for every $(c,s) \in A(z)$ with $(\sigma,c') = \delta_z(c,s)$, we have the restriction
\[ h(c') = \left\{\begin{array}{cl} 
h(c) + 1 & \textrm{if }s=\nearrow \\
h(c)  & \textrm{if }s=\rightarrow \\
h(c) - 1 & \textrm{if }s=\searrow 
 \end{array} \right..
\]
\label{def:scaffolding}
\end{Definition}

An entire scaffolding is shown by Figure~\ref{def:scaffolding}.

\begin{figure}
\begin{center}
\includegraphics[scale = 1.5]{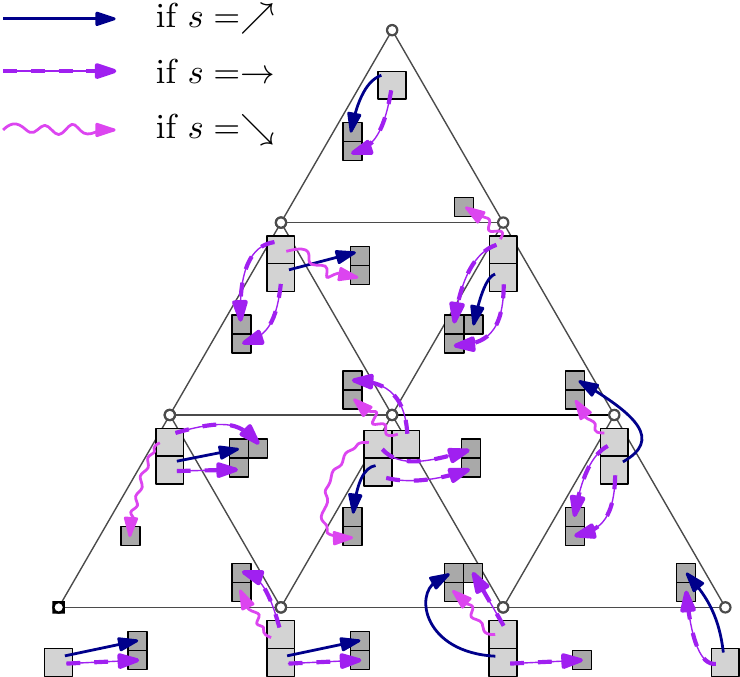}
\end{center}
\caption{A random scaffolding for $\T_3$.
}
\label{figure:example_scaffolding}
\end{figure}

\begin{Proposition} For any $L \geq 0$, there exists a scaffolding.
\end{Proposition}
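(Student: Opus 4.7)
The plan is to build the scaffolding $(\delta_z)_{z \in \T_L}$ one point $z$ at a time, with no compatibility needed between different $z$'s, and for each fixed $z$ to decompose $\delta_z$ as a disjoint union of bijections $\delta_z^{(j)}$ indexed by the height $j$ of the target cell. Both sides of the map stratify naturally by this height: on the codomain side, the $j$-th stratum is $\{(\sigma,c') \in \C_1(z+s_1)\cup \C_2(z+s_2)\cup \C_3(z+s_3) : h(c')=j\}$; on the domain side, the height constraint of Definition~\ref{def:scaffolding} forces $(c,s)\in A(z)$ into the stratum $h(c)+1$, $h(c)$, or $h(c)-1$ depending on whether $s$ is $\nearrow$, $\rightarrow$, or $\searrow$. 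So it is enough to exhibit, for each $j \in \{0,\ldots,H\}$, any bijection between the two level-$j$ strata, and then glue them together.

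First I would verify that these two strata have equal cardinalities. The codomain stratum has size $p_j(z+s_1)+p_j(z+s_2)+p_j(z+s_3)$. On the domain side, the pairs $(c,\nearrow)$ landing at height $j$ number $p_{j-1}(z)$ (with the convention $p_{-1}(z)=0$); the pairs $(c,\rightarrow)$ landing at height $j$ number $p_j(z)$ whenever $\rightarrow$ is allowed at height $j$, i.e.\ except when $L$ is even and $j=H$; and the pairs $(c,\searrow)$ landing at height $j$ number $p_{j+1}(z)$, which vanishes when $j=H$ because $\C(z)$ carries no cell above height $H$. Summing contributions gives exactly $p_{j-1}(z)+p_j(z)+p_{j+1}(z)$ for $1\le j\le H-1$, $p_0(z)+p_1(z)$ for $j=0$, and $p_{H-1}(z)+p_H(z)$ (resp.\ $p_{H-1}(z)$) for $j=H$ when $L$ is odd (resp.\ even). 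These are precisely the three identities \eqref{eq:pi}, \eqref{eq:p0}, \eqref{eq:pH} of Lemma~\ref{lem:identity_profile}, so the two strata match in cardinality.

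Once the cardinalities coincide, any choice of bijections $\delta_z^{(j)}$ (for instance, a lexicographic matching) between equinumerous finite sets produces a bijection $\delta_z:=\bigsqcup_{j=0}^H \delta_z^{(j)}$ with the required height-preservation property. There is no real obstacle: since each $\delta_z$ is built locally and independently from its three neighbours' cell representations, no global coherence issue arises. The one point I would not prove here is that the cell representations $\C(z)$ exist in the first place, i.e.\ that $p_i(z)\ge 0$ for every $z$ and $i$; this non-negativity follows from the explicit construction of Proposition~\ref{prop:cell_rep}, to which I would appeal. With this ingredient in hand, the existence of a scaffolding is immediate.
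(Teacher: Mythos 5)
Your proof is correct and follows essentially the same route as the paper's: stratify both $A(z)$ and $\C_1(z+s_1)\cup\C_2(z+s_2)\cup\C_3(z+s_3)$ by the target height, match cardinalities stratum by stratum via the three identities of Lemma~\ref{lem:identity_profile}, and glue arbitrary bijections on each level. Your explicit accounting of the boundary cases ($j=0$, $j=H$, $L$ even) and the appeal to Proposition~\ref{prop:cell_rep} for the existence of cell representations are exactly the ingredients the paper uses.
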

\begin{proof}
Let us consider any point $z$ of $\T_L$, and let $f'$ be an integer in $\{0,\dots,H\}$.

Consider the sets 
\begin{align*}
\mathcal U_{f'}(z) & := \{ (c,\nearrow) \in A(z) \ : h(c) = f' - 1 \}, 
\\ \mathcal F_{f'}(z) & := \{ (c,\rightarrow) \in A(z) \ :h(c) = f' \}, \\ 
\mathcal D_{f'}(z) & := \{ (c,\searrow) \in A(z) \ : h(c) = f' + 1 \},
\\ \C_{i,f'}(z) & := \{ (s_i,c') \in \C_i(z)\ : \ h(c') = f'\} & \textrm{ for }i \in \{1,2,3\}.
\end{align*}
By Lemma~\ref{lem:identity_profile}, we have
\[ \left| \mathcal U_{f'}(z) \cup \mathcal F_{f'}(z) \cup  \mathcal D_{f'}(z) \right| = \left| \C_{1,f'}(z+s_1) \cup \mathcal \C_{2,f'}(z+s_2) \cup \C_{3,f'}(z+s_3)  \right|. \]
We can then choose any bijection $b_{f'}$ between these two sets and define $\delta_z(c,s)$ for every $(c,s) \in \mathcal U_{f'}(z) \cup \mathcal F_{f'}(z) \cup  \mathcal D_{f'}(z)$ as $b_{f'}(c,s)$.

Doing so for every $f' \in \{0,\dots,H\}$ enables to cover every pair $(c,s) \in A(z)$, and thus successfully define $\delta_z$ on the set of such triplets. 

The required bijectivity of $\delta_z$ is straightforward (because $b_{f'}$ is also bijective).
\end{proof}

\begin{figure}
\begin{center}
\includegraphics[width=0.9\textwidth]{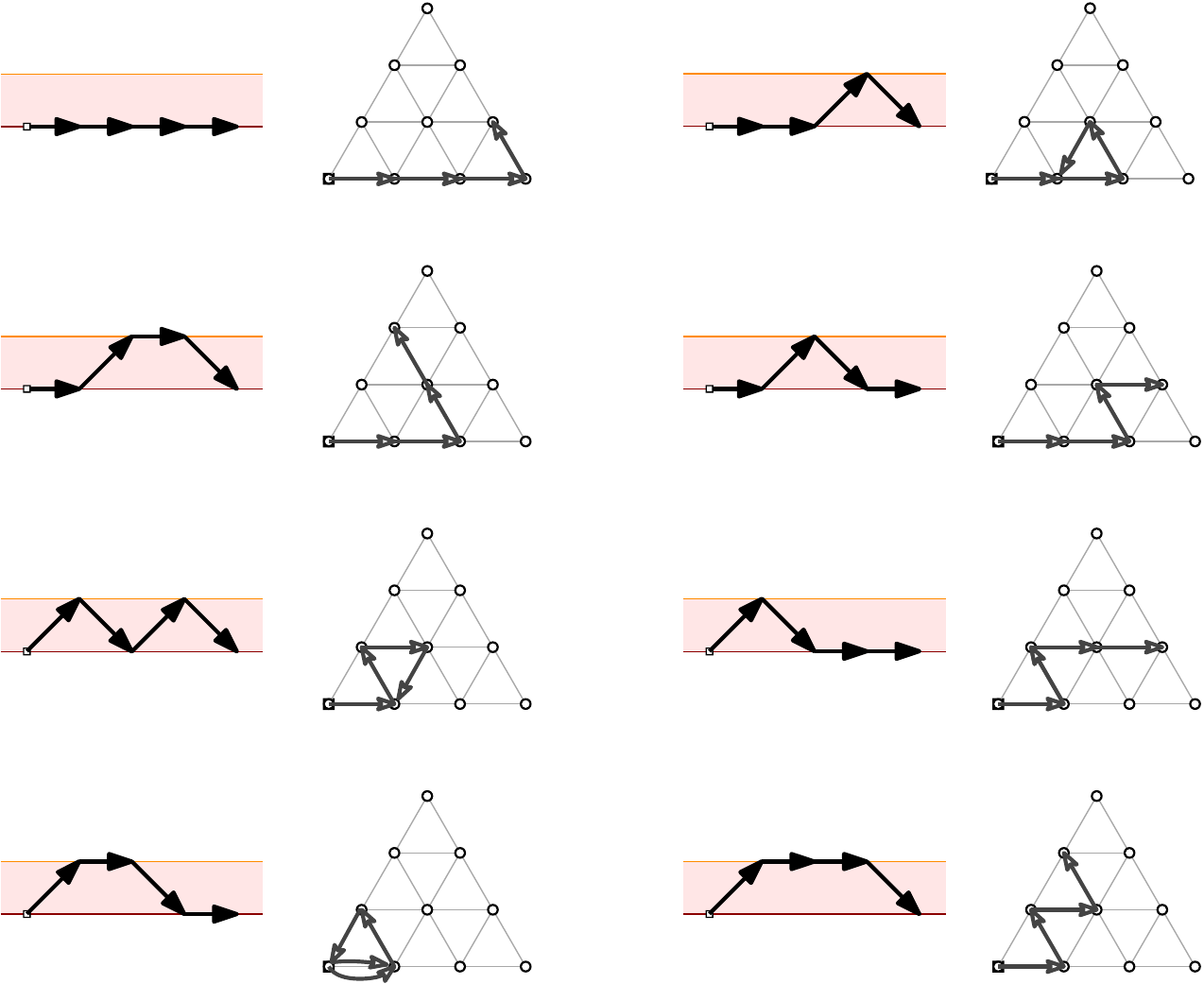}
\end{center}
\caption{The Motzkin paths and the triangular paths of length $3$ in correspondence under Algorithms~\ref{algo:scaffolding1} and~\ref{algo:scaffolding2}, given the scaffolding of Figure~\ref{figure:example_scaffolding}.}
\label{figure:scaffolding-bijection}
\end{figure}

Once we fix a scaffolding for our triangular lattice, one can describe a bijection between triangular paths and Motzkin paths. The bijection is given by Algorithms~\ref{algo:scaffolding1} and~\ref{algo:scaffolding2}.

\begin{algorithm}[caption={Bijection from Motzkin paths to triangular paths, given a scaffolding $(\delta_z)_{z \in \T_L}$ (for \textit{scaffolding}, see Definition~\ref{def:scaffolding}).}, label={algo:scaffolding1}]
metadata: a scaffolding $\delta_z$
input: a Motzkin path m
output: a triangular path p starting at $\origin$
n $\gets$ length of m;
p $\gets$ empty path;
z $\gets \origin$;
c $\gets$ unique cell of height $0$ in the cell representation of z; 
for i from 1 to n 
do ($\sigma$, c) $\gets$ $\delta_{\textrm z}$(c, m[i]);
   add $\sigma$ to the end of p;
   z $\gets$ z + $\sigma$; 
return p;
\end{algorithm}

\begin{algorithm}[caption={Bijection from triangular paths to  Motzkin paths, given a scaffolding $(\delta_z)_{z \in \T_L}$ (for \textit{scaffolding}, see Definition~\ref{def:scaffolding}).}, label={algo:scaffolding2}]
metadata: a scaffolding $\delta_z$
input: a triangular path p starting at $\origin$
output: a Motzkin path m
n $\gets$ length of p;
m $\gets$ empty path;
z $\gets \origin + \sum_{i=1}^n$ p[i];
c $\gets$ unique cell of height $0$ in the cell representation of z; 
for i decreasing from n to 1 
do  (c, s) $\gets$ $\delta_{\textrm z}^{-1}$(p[i], c);
   add s to the beginning of m;
   z $\gets$ z - p[i]; 
return m;
\end{algorithm}

\begin{Theorem}
Let $(\delta_z)_{z \in \T_L}$ be a scaffolding. Algorithms~ \ref{algo:scaffolding1} and~\ref{algo:scaffolding2} give two inverse bijections between the set of Motzkin paths of length $n$ with bounded amplitude $L$ and the set of triangular paths of $\T_L$ of length $n$ starting at $\origin$.
\end{Theorem}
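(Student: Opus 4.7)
My approach is to verify both that each algorithm produces a valid object in the target set and that they are mutual inverses, by tracking the pair $(z, c)$ through the iterations. The key invariant for Algorithm~\ref{algo:scaffolding1} is that after $i$ iterations, $z = \origin + \sum_{j=1}^{i} \sigma_j$ lies in $\T_L$, $c \in \C(z)$, and $h(c)$ coincides with the height of the input Motzkin path $m$ at position $i$. The base case is immediate: since $\origin = L e_3$ has profile polynomial $1 - x^{L+1}$, the set $\C(\origin)$ contains a unique cell, necessarily at height $0$, which matches the starting height of $m$. For the inductive step, the assumption that $m$ has amplitude at most $L$ ensures that $m[i+1]$ is an allowed step from the current height $h(c)$, so $(c, m[i+1]) \in A(z)$ and $\delta_z$ may legitimately be applied. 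By Definition~\ref{def:scaffolding}, the image $(\sigma, c')$ satisfies $c' \in \C(z + \sigma)$ and $h(c')$ is the correct updated height; in particular, since $\C(z + \sigma)$ is nonempty, $z + \sigma$ must lie in $\T_L$.

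An entirely analogous argument, read backwards, handles Algorithm~\ref{algo:scaffolding2}: I maintain that before processing $p[i]$, the variable $z$ equals $\origin + \sum_{j=1}^{i} p[j]$, $c \in \C(z)$, and $h(c)$ matches the height at position $i$ of the partially reconstructed Motzkin meander. The base case $i = n$ is available because every $z \in \T_L$ has $p_0(z) = 1$, so the cell of height $0$ in $\C(z_n)$ exists and is unique. The inductive step uses the bijectivity clause of Definition~\ref{def:scaffolding} to invert $\delta_z$. The restriction that $s$ be an \emph{allowed} step from $h(c_{\text{new}})$ automatically guarantees that the reconstructed meander stays between heights $0$ and $H$ and has amplitude at most $L$. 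Crucially, after processing $p[1]$ the variable $z$ reaches $\origin$, whose profile $(1, 0, \ldots, 0)$ forces the final $c$ to have height $0$; this ensures the output is a genuine Motzkin path rather than a meander starting at a positive height.

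Mutual inversion is then a step-by-step check. At iteration $i$ of Algorithm~\ref{algo:scaffolding1}, we set $(\sigma_i, c_i) = \delta_{z_{i-1}}(c_{i-1}, m[i])$, while in the corresponding backward iteration of Algorithm~\ref{algo:scaffolding2} applied to the output $p = (\sigma_1, \ldots, \sigma_n)$, we compute $\delta_{z_{i-1}}^{-1}(\sigma_i, c_i) = (c_{i-1}, m[i])$ by bijectivity of $\delta_{z_{i-1}}$. Iterating gives $\Psi \circ \Phi = \mathrm{id}$ and, by the symmetric argument, $\Phi \circ \Psi = \mathrm{id}$, where $\Phi$ and $\Psi$ denote Algorithms~\ref{algo:scaffolding1} and~\ref{algo:scaffolding2}.

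The main obstacle, and the place where the carefully crafted Definition~\ref{def:scaffolding} earns its keep, is verifying that the outputs land \emph{exactly} in the stated target sets: Algorithm~\ref{algo:scaffolding1}'s output never leaves $\T_L$ (because $\delta_z$ maps only to cells of neighbours actually inside $\T_L$, those outside having empty cell representation), while Algorithm~\ref{algo:scaffolding2}'s output is a Motzkin path returning to height $0$ (thanks to the special profile of $\origin$). The cardinality equality needed to make "bijection" the right word follows from Theorem~\ref{theo:anywhere} evaluated at $z = \origin$, which gives $f_n(\origin) = m_n(0)$, the number of Motzkin paths of length $n$ with amplitude bounded by $L$.
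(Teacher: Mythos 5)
Your proposal is correct and follows essentially the same route as the paper's proof: track the pair $(z,c)$ through the loops, use the amplitude bound and the ``allowed step'' restriction for well-definedness, note that $p_0(z)=1$ (and the singleton profile of $\origin$) forces the terminal state of Algorithm~\ref{algo:scaffolding1} to coincide with the initial state of Algorithm~\ref{algo:scaffolding2}, and then invert step by step via the bijectivity of each $\delta_z$. The only remark is that your closing appeal to Theorem~\ref{theo:anywhere} for a cardinality equality is superfluous, since exhibiting two mutually inverse well-defined maps already yields the bijection.
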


\begin{proof}
At the end of Algorithm~\ref{algo:scaffolding1}, note that the height of the ending cell is $0$, since variable $f$ keeps track of the height of the input Motzkin path (because of the last restriction of Definition~\ref{def:scaffolding}) and a Motzkin path always ends at height $0$. Moreover, because the polynomial $(1-x^{i+1})(1-x^{j+1})(1-x^{k+1})/(1-x)^2$ always has a constant term equal to $1$, by Definition~\ref{def:profile}, we have $p_0(z) = 1$ for every $z \in \T_L$. But $\ell$ is always between $1$ and $p_f(z)$, so at the end of Algorithm~\ref{algo:scaffolding1}, $\ell$ must be $1$.

Thus, the values of $z$ and $c$ are the same at the end of Algorithm~\ref{algo:scaffolding1} and at the beginning of Algorithm~\ref{algo:scaffolding2}. From this point, it is easy to see that the loop of Algorithm~\ref{algo:scaffolding2} reverses what the loop of Algorithm~\ref{algo:scaffolding1} did. Therefore the two algorithms are mutual inverse bijections.
\end{proof}

\begin{Remark}
If we omit the cost of a precalculation (which is the construction of a scaffolding which can be made in $O(L^4)$ time), both algorithms have a linear-time complexity. 

The scaffolding bijection of Subsection \ref{ss:trapeziums} does not require any precalculation (which can be costly if $L$ is large) and it still has a linear-time complexity. 
\end{Remark}

\begin{Remark}If two Motzkin paths $m$ and $m'$ share a common prefix of length $j$, then the two corresponding triangular paths under Algorithm~\ref{algo:scaffolding1} will also share a common prefix of length $j$. The converse is not true.

This property is not shared by the exponential bijection of Figure~\ref{figure:Omega}. This is why this bijection is not a particular case of the scaffolding bijections.
\end{Remark}

\begin{Remark} No scaffolding is necessary if we wish to sample a random forward path under the uniform distribution, given a uniform random Motzkin path of bounded amplitude. 

Indeed, since any scaffolding is suitable to have a bijection, one can pick this scaffolding at random, on the fly. To do so, at each step of the loop in Algorithm~\ref{algo:scaffolding1}, we choose $\delta_z(c,m[i])$ as one of the cells with height $h'$ belonging to $\C(z+s_1)\cup \C(z+s_2) \cup \C(z+s_3)$, where $h'=h(c) + 1$ if $m[i]=\nearrow$, $h'=h(c) $ if $m[i]=\rightarrow$, or $h'=h(c) - 1$ if $m[i]=\searrow$. This choice must be uniform among all cells of height $h'$.
\end{Remark}

%\subsection{Increasing border one}

\subsection{Two direct bijective proofs of Mortimer and Prellberg's theorem}
\label{ss:bico}

We mention two ways to extend this to a bijection between bounded Motzkin paths with bicolored (black and white) edges and triangular paths (potentially including forward and backward steps), which provides
a direct combinatorial interpretation of Theorem~\ref{theo:mortimerprellberg}. 

The first method is as mentioned at the end of Section~\ref{s:expo}: Starting with a bicolored Motzkin path, use the scaffolding bijection above to send the Motzkin path to a forward path, and map the colors to a direction vector based on the order in which they appear (black $\to F$ and white $\to B$). Then, using the bijection of Theorem \ref{theo:directions}, send the forward path to a path with that direction vector.

For the second method we start by defining a {\em reverse scaffolding} 
\[\overline{\delta_z} : A(z) \to \overline{\C_1}(z+\overline{s_1})\cup \overline{\C_2}(z+\overline{s_2}) \cup \overline{\C_3}(z+\overline{s_3}),\]
where each $\overline{\C_i}(z)$ is defined by
\[
\C_i(z):= \{(\overline{s_i},c) : c\in\C(z)\}.\]
We define $\overline{\delta_z}$ symmetrically to $\delta_{z}$ reflected about the midline of $\T_{L}$ passing though $\origin=x_{3}e_{3}$. To be precise, if $z=x_{1}e_{1}+x_{2}e_{2}+x_{3}e_{3}$, let $z'=x_{2}e_{1}+x_{1}e_{2}+x_{3}e_{3}$ and $\delta_{z'}(a)=(s_{j},c)$. Then  we define $\overline{\delta_{z}}(a):=(\overline{s_{4-j}},c)$. This is possible because the cell representation of $z'$ is necessarily the same as that of $z$. The bijection then runs as follows: starting with a bicolored Motzkin path, we apply the scaffolding $\delta_{z}$ when there is a black step, and we apply the reverse scaffolding $\overline{\delta_z}$ when there is a white step. An advantage of that second version is that it takes linear time to apply.

\subsection{A canonical scaffolding in terms of colored trapeziums}
\label{ss:trapeziums}

In this section we provide an explicit scaffolding which yields a bijection between bounded Motzkin paths and triangular paths which takes linear time to compute (it does not depend on $L$). First we define a new cell representation for $\T_L$.

%for each point $z$ in the triangle a canonical set $\C(z)$ and height function $h$ such that the number of cells in $\C(z)$ of height $f$ is $p_{f}(z)$. \andrew{I think we should have a name for $\C(z)$.}

\begin{Proposition}
For every $z \in \T_L$, the set
\[\C(z):=\left\{(f,\ell)\in\mathbb{Z}^{2}\ | \ \max(0,f-x_{3}) \leq \ell \leq \min(f,x_{1},x_{2},x_{1}+x_{2}-f)\right\}\]
is a cell representation of $z$ (see Definition~\ref{def:cell_representation}).
\label{prop:cell_rep}
\end{Proposition}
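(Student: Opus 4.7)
The plan is to verify the proposition by directly computing both quantities and matching them. From the explicit description of $\C(z)$,
\[\bigl|\{\ell \in \Z : (f, \ell) \in \C(z)\}\bigr| = \max\bigl(0,\, \min(f, x_1, x_2, x_1 + x_2 - f) - \max(0, f - x_3) + 1\bigr),\]
so I need to show this equals $p_f(z)$ for each $f \in \{0, \ldots, H\}$.

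To compute $p_f(z)$, I would factor
\[Pol_z(x) = (1 - x^{x_3+1})\,Q(x), \qquad Q(x) = \frac{(1 - x^{x_1+1})(1 - x^{x_2+1})}{(1-x)^2}.\]
Since $Q(x) = \bigl(\sum_{i=0}^{x_1} x^i\bigr)\bigl(\sum_{j=0}^{x_2} x^j\bigr)$, its coefficient $\phi(m) := [x^m]\,Q(x)$ counts pairs $(\ell_1, \ell_2)$ with $0 \leq \ell_1 \leq x_1$, $0 \leq \ell_2 \leq x_2$, and $\ell_1 + \ell_2 = m$, and is the trapezoidal function
\[\phi(m) = \max\bigl(0,\, \min(m+1,\, x_1+1,\, x_2+1,\, x_1+x_2+1-m)\bigr).\]
The product factorization then yields $p_f(z) = \phi(f) - \phi(f - x_3 - 1)$.

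It then remains to check $\phi(f) - \phi(f - x_3 - 1) = |\{\ell : (f, \ell) \in \C(z)\}|$ for each $f \in \{0, \ldots, H\}$, which I do by case analysis. Both sides are symmetric in $x_1$ and $x_2$, so I may assume $x_1 \leq x_2$. The cases split on whether $f \leq x_3$ (so $\phi(f - x_3 - 1) = 0$) and on which of the three pieces (strictly increasing, plateau, strictly decreasing) of the trapezoid contains $f$ and, when $f > x_3$, $f - x_3 - 1$. The crucial use of the bound $f \leq H$, equivalently $2f \leq L$, is twofold: first, it guarantees $f - x_3 - 1 \leq x_2$, so $\phi(f - x_3 - 1)$ never evaluates on the strictly decreasing piece; second, it rules out the would-be problematic configuration in which simultaneously $f > x_2$ and $f > x_1 + x_3 + 1$, since under $f \leq H$ this would force the contradictory inequalities $x_2 < x_1 + x_3$ and $x_2 > x_1 + x_3 + 2$. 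In every surviving case, both sides reduce to the same linear expression in $(f, x_1, x_2, x_3)$, or both vanish via the outer $\max(0, \cdot)$ clamping. The main obstacle is purely organizational---managing the sub-cases and carefully tracking when the clamping takes effect---rather than any deeper idea beyond the explicit trapezoidal formula for $\phi$ and the arithmetic bound $2f \leq L$.
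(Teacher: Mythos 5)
Your proposal is correct and takes essentially the same route as the paper: both factor $Pol_z(x)=(1-x^{x_3+1})\,Q(x)$, use the trapezoidal coefficients $\phi(m)$ of $Q$, reduce to the identity $p_f(z)=\phi(f)-\phi(f-x_3-1)$, and then match this with the column sizes of the claimed set using the bound $2f\le L$. The only difference is presentational: the paper first treats the case $x_3\ge x_1+x_2$ and then argues the general case geometrically via removal of a shifted copy of the cells, whereas you carry out the matching as an explicit (and correctly identified) case analysis on the pieces of the trapezoid.
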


\begin{figure}
\begin{center}\hfill
\begin{minipage}{0.4\textwidth}
\includegraphics{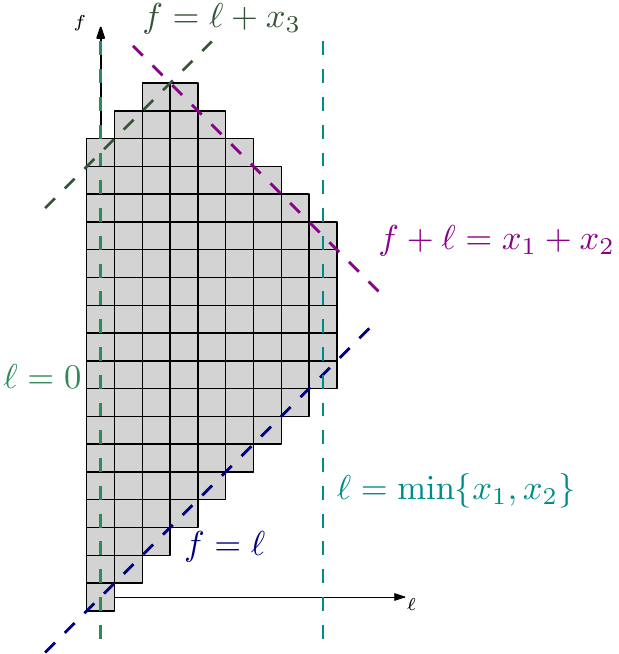}
\end{minipage}\hspace*{\fill}\hspace*{\fill}
\begin{minipage}{0.4\textwidth}
\includegraphics[width = \textwidth]{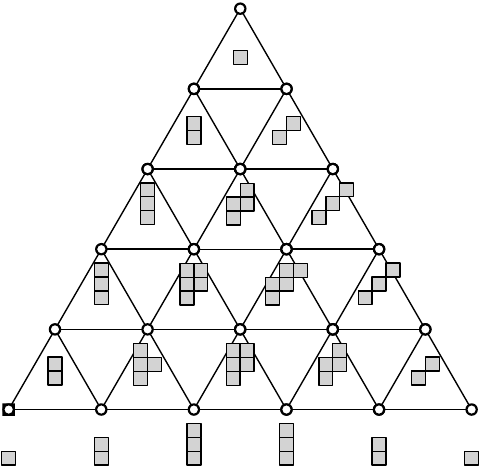}
\end{minipage}
\hspace*{\fill}
\end{center}
\caption{\textit{Left.} The shape of the cell representation from Proposition~\ref{prop:cell_rep} of a point $x_1 e_1 + x_2 e_2 + x_3 e_3$. \textit{Right.} The associated cell representation of $\T_5$.  }
\label{figure:cell_representation}
\end{figure}

\begin{proof}
Set $z=x_{1}e_{1}+x_{2}e_{2}+x_{3}e_{3}$, so $x_{1}+x_{2}+x_{3}=L$. Recall that for \[p_{f}(z)=[y^{f}](1+\cdots+y^{x_{1}})(1+\cdots+y^{x_{2}})(1-y^{x_{3}+1}),\]
for $2f\leq L$. For $x_{3} \geq x_{1}+x_{2}$, an expansion of the two first factors shows that the numbers $p_f(z)$ are 
\[1,2,\ldots, \underbrace{\min(x_{1},x_{2})+1,\min(x_{1},x_{2})+1,\ldots,\min(x_{1},x_{2})+1}_{\textrm{repeated }\max(x_{1},x_{2})-\min(x_{1},x_{2})+1\textrm{ times}},\min(x_{1},x_{2}),\dots,2,1,\]
 for $f=0,1,\ldots,x_{1}+x_{2}$. So, if we simply define $\C(z):=\{(f,\ell) \ |\  0\leq\ell\leq p_{f}(z)-1\}$ with $h((f,\ell))=f$, then $\C(z)$ can alternatively be written as
\[\C(z)=\left\{(f,\ell)\in\mathbb{Z}^{2}\ | \ 0\leq \ell \leq \min(f,x_{1},x_{2},x_{1}+x_{2}-f)\right\}.\]
For $x_{3}<x_{1}+x_{2}$, it suffices to remove from $\C(z)$ any points $(f,\ell)$ for which $(f,\ell-x_{3}-1)$ belongs to $\C(z)$, as this corresponds to multiplying the polynomial by $(1-y^{x_{3}+1})$. This yields the above general formula for $C(z)$.
%, which we take as a definition:
%\[\C(z):=\left\{(f,\ell)\in\mathbb{Z}^{2}\ | \ \max(0,f-x_{3}) \leq \ell \leq \min(f,x_{1},x_{2},x_{1}+x_{2}-f)\right\}.\]
\end{proof}

Examples of the cell representation of Proposition~\ref{prop:cell_rep} are shown in Figure~\ref{figure:cell_representation}.

\begin{figure}
\begin{center}
\includegraphics[width=0.95\textwidth]{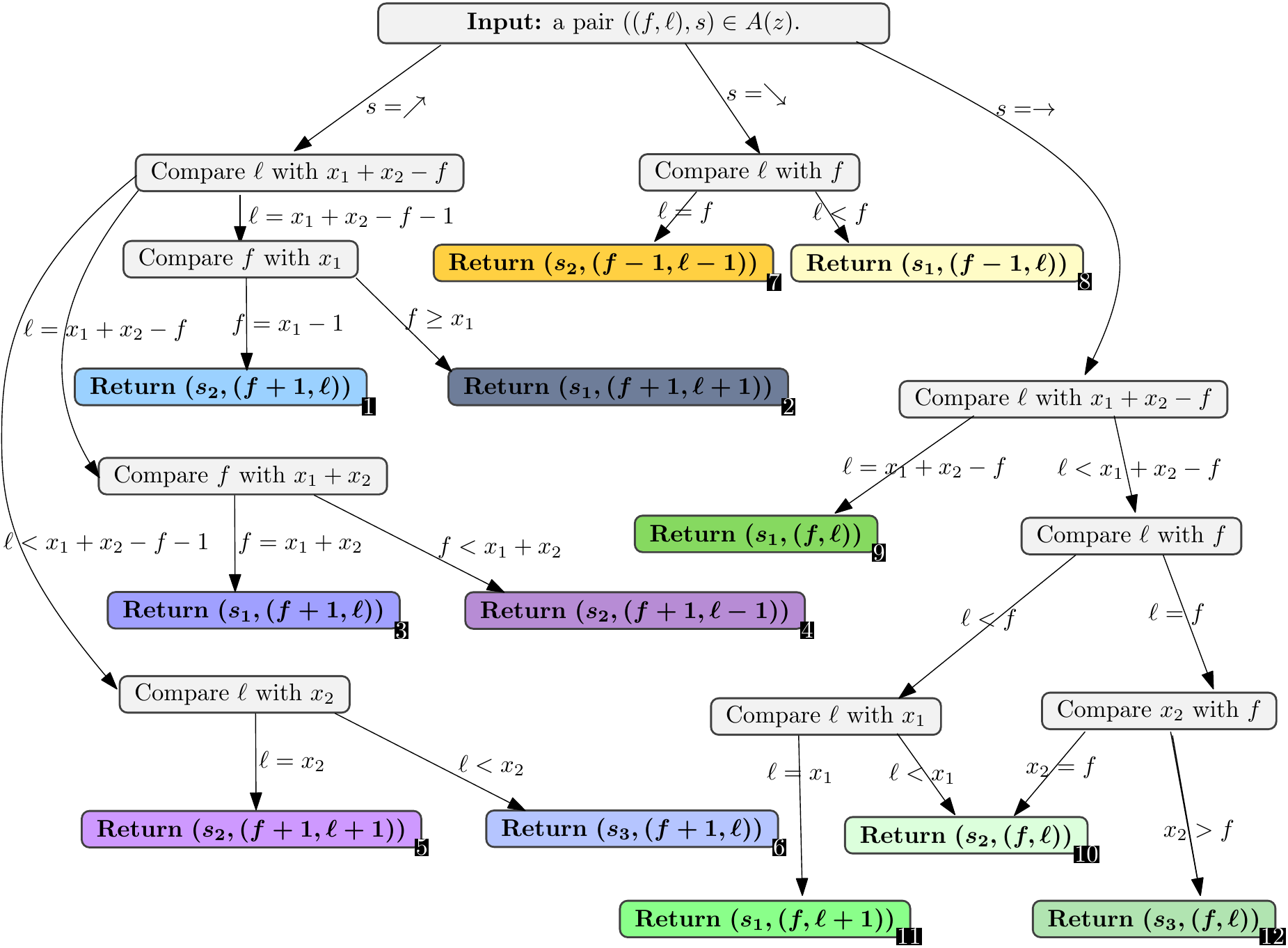}
\end{center}
\caption{A diagram defining the scaffolding $\delta_{z}$.}
\label{figure:trapezium_scaffolding}
\end{figure}

\begin{figure}
\begin{center}
\includegraphics[scale=0.7]{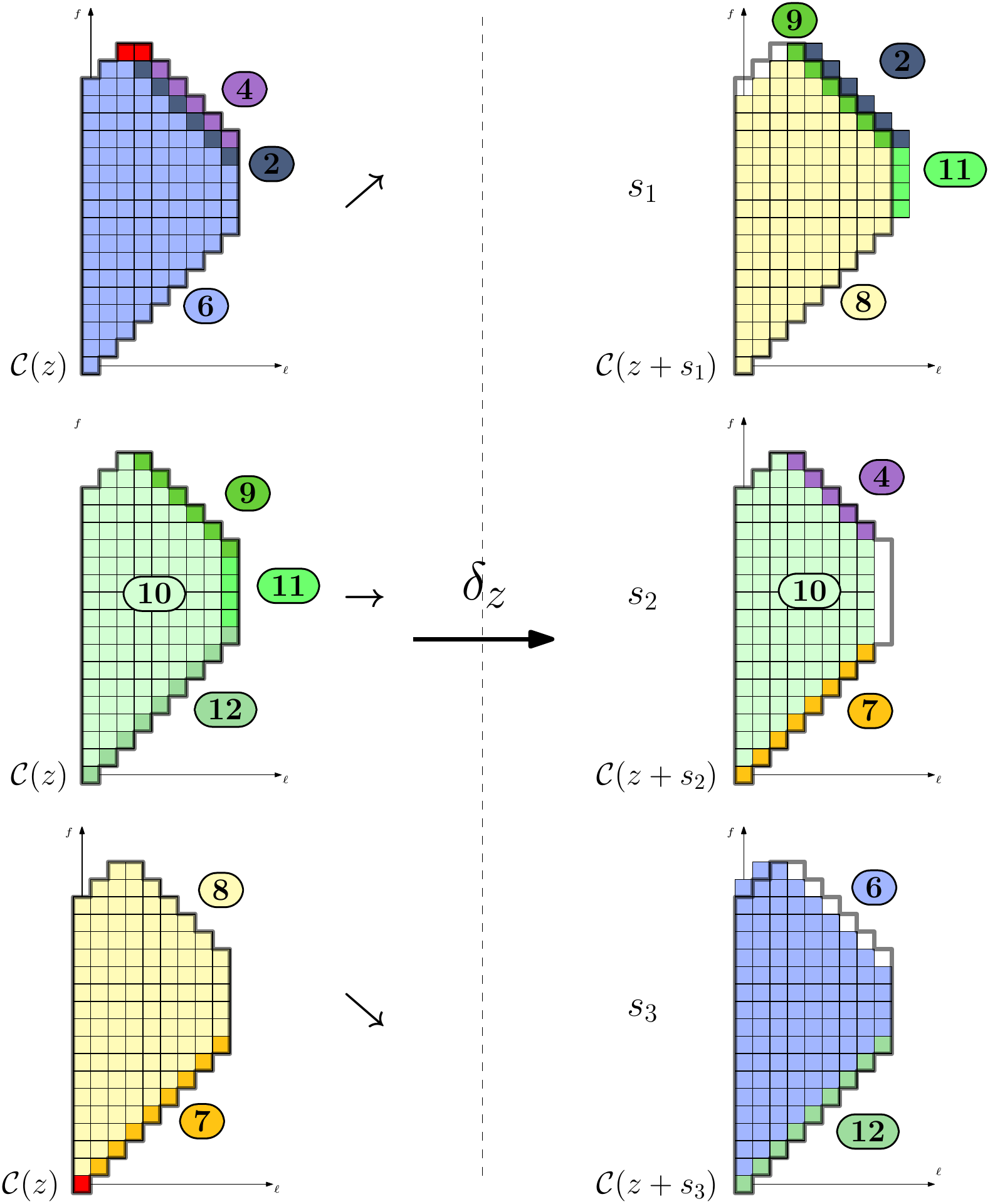}
\end{center}
\caption{A geometric depiction of the bijection $\delta_{z}$ in the case $x_{1}=8$, $x_{2}=13$, $L=37$. On the left, there are three copies of of $\C(z)$ while on the right we have $\C(s_{1}+z)$, $\C(s_{2}+z)$ or $\C(s_{3}+z)$.
Each case in Figure \ref{figure:trapezium_scaffolding} is represented by a colored zone with labels matching the numbers shown in Figure \ref{figure:trapezium_scaffolding}.
Cells for which the given step is not allowed are colored in red.    
The grey polygon on each of the cell representations is the outline of $C(z)$ (equivalently, the pentagon delimited by the lines $\ell = 0$, $f = \ell$,  $\ell = 8$, $f + \ell = 21$ and $f = \ell +  16$).} 
%The red dots on the left are those from which the given step is not an allowed step. Each case in Figure \ref{figure:trapezium_scaffolding} is shown by a dotted line around the elements falling into that case with the number matching the number shown in Figure \ref{figure:trapezium_scaffolding}.}
\label{figure:trapeziums_diagram}
\end{figure}

\begin{figure}
\begin{center}
\includegraphics[scale=0.7]{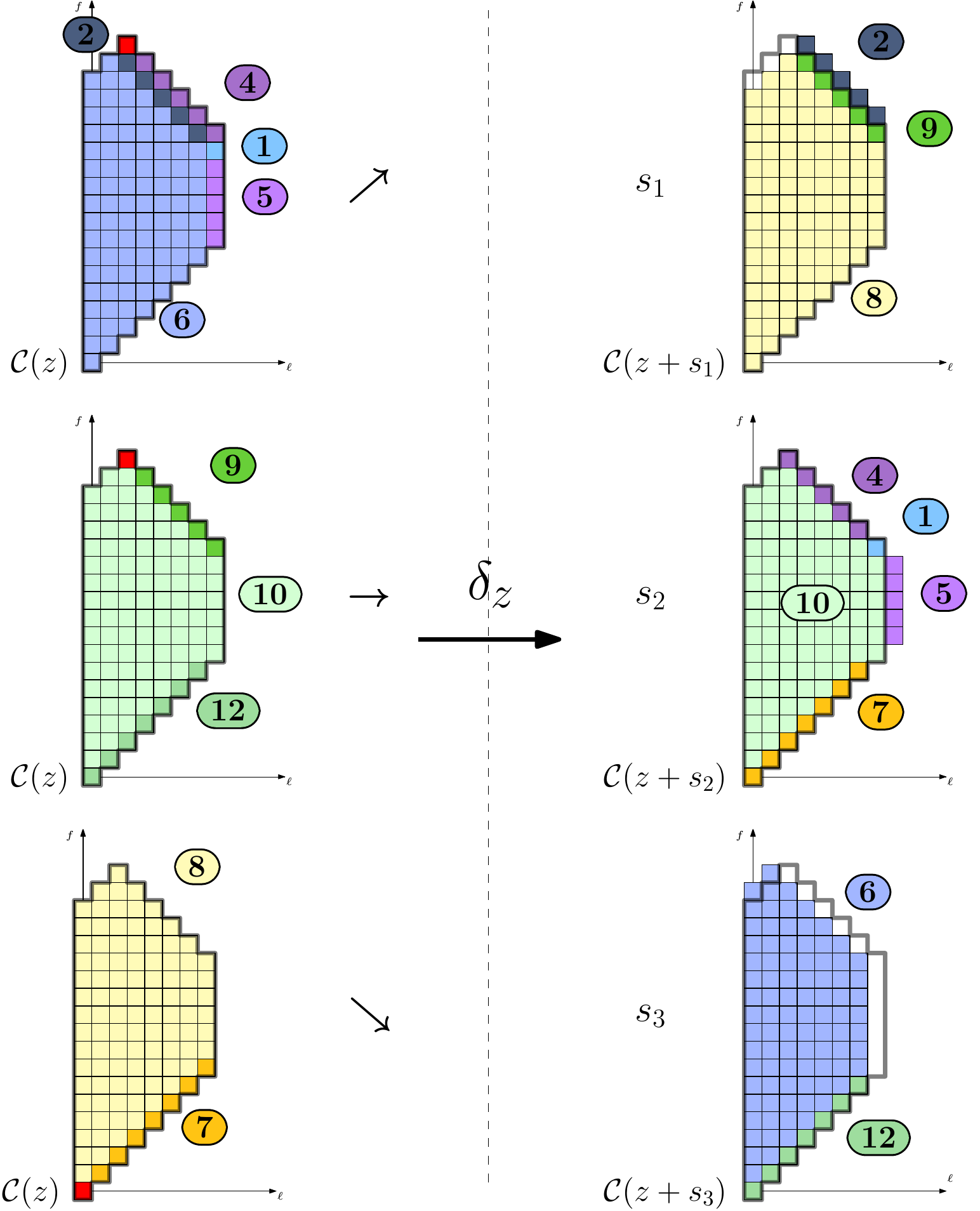}
\end{center}
\caption{The bijection $\delta_{z}$ in the case $x_{1}=13$, $x_{2}=7$, $L=36$. In comparaison with Figure~\ref{figure:trapeziums_diagram}, this decomposition features the case where $L$ is even, but most importantly, the case where $x_1 > x_2 +1$.}
\label{figure:trapeziums_diagram2}
\end{figure}

Finally it remains to define a scaffolding
$$\delta_z : A(z) \to \C_1(z+s_1)\cup \C_2(z+s_2) \cup \C_3(z+s_3),$$
where we recall that $A(z)$ and $\C_i(z)$ are defined by
\begin{align*}A(z)&:=\{(c,s)\in \C(z)\times \{\nearrow,\rightarrow,\searrow\} : s \mbox{ is an allowed step from } h(c)\}.\\
\C_i(z)&:= \{(s_i,c) : c\in\C(z)\}.\end{align*}
We define $\delta_{z}$ by the procedure shown in Figure \ref{figure:trapezium_scaffolding}. Under this procedure there are 12 different cases, shown by the {colored} boxes labeled from $1$ to $12$.

In the following theorem we show that this is indeed a bijection. We give a geometric interpretation of this bijection in two specific cases in Figures \ref{figure:trapeziums_diagram} and \ref{figure:trapeziums_diagram2}. 
\begin{Theorem}
For each $z\in \T_L$, the function $\delta_{z}$ defined by the procedure in Figure \ref{figure:trapezium_scaffolding} is a bijection from $A(z)$ to $\C_1(z+s_1)\cup \C_2(z+s_2) \cup \C_3(z+s_3)$.
\label{theo:trapezium}
\end{Theorem}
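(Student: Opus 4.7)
The plan is to verify the theorem by a direct case-by-case analysis following the 12 boxes of Figure~\ref{figure:trapezium_scaffolding}, proving three properties in sequence: (i) the 12 cases partition the domain $A(z)$, (ii) each piecewise formula lands in the claimed $\C_i(z+s_i)$, and (iii) the resulting map is bijective. Since $|A(z)| = |\C_1(z+s_1)\cup \C_2(z+s_2) \cup \C_3(z+s_3)|$ follows from Lemma~\ref{lem:identity_profile} (after summing over heights $f$ as in the existence proof of scaffoldings), it suffices to verify well-definedness and injectivity.

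For (i), I would write the explicit cell representation of Proposition~\ref{prop:cell_rep}, namely
\[\C(z)=\{(f,\ell) : \max(0,f-x_3) \leq \ell \leq \min(f,x_1,x_2,x_1+x_2-f)\},\]
together with the three exclusions defining the allowed steps (no $\searrow$ at height $0$, no $\nearrow$ at height $H$, and no $\rightarrow$ at height $H$ when $L$ is even), and check that each $(c,s)\in A(z)$ satisfies the hypotheses of exactly one of the 12 boxes. This is purely combinatorial bookkeeping on linear inequalities in $(f,\ell,x_1,x_2,x_3)$. For (ii), in each of the 12 cases the output formula gives explicit values $(\sigma,(f',\ell'))$; I would substitute into the inequalities defining $\C(z+s_i)$ (using that going to $z+s_i$ shifts the $(x_1,x_2,x_3)$ coordinates by $+1$ in one position and $-1$ in another) and confirm the resulting cell is valid. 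The height constraint $h(c')-h(c) \in \{-1,0,+1\}$ according to the step type is built into the formulas.

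For (iii), I would construct an explicit inverse by dualizing the case analysis. The right-hand panels of Figures~\ref{figure:trapeziums_diagram} and~\ref{figure:trapeziums_diagram2} display $\C_1(z+s_1) \cup \C_2(z+s_2) \cup \C_3(z+s_3)$ decomposed into 12 colored regions, one per case of Figure~\ref{figure:trapezium_scaffolding}; showing that these colored regions are pairwise disjoint and that the piecewise-linear formula is invertible on each of them gives the injectivity of $\delta_z$. Equivalently, I would verify that within each of the three target cell representations $\C(z+s_i)$, the union of the boxes labeled with the index $i$ covers exactly $\C(z+s_i)$ without overlap; combined with well-definedness and the cardinality equality, this yields the bijection.

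The main obstacle will be the proliferation of boundary cases. The pentagon $\C(z)$ degenerates differently depending on the relative sizes of $x_1,x_2,x_3$: the top truncation by $f+\ell \leq x_1+x_2$ or $\ell \leq x_1$ may or may not be active, the asymmetry between $x_1$ and $x_2$ forces certain boxes to flip or vanish (compare Figure~\ref{figure:trapeziums_diagram} with $x_2 > x_1$ against Figure~\ref{figure:trapeziums_diagram2} with $x_1 > x_2+1$), and the parity of $L$ changes which maximal-height operations are allowed. The bulk of the proof is therefore a careful enumeration verifying that under every combination of active constraints the 12 regions tile both sides correctly, with degenerate regions (empty on one particular side) matching their counterparts on the other. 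Once this tedious but mechanical case analysis is complete, the bijection follows immediately.
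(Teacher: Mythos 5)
Your plan is correct and matches the paper's own argument: the paper proves the theorem exactly by the mechanical case check you describe, verifying for each of $\C_1(z+s_1)$, $\C_2(z+s_2)$, $\C_3(z+s_3)$ which of the twelve cases cover it and that every cell is covered exactly once (cases $2,3,8,9,11$ for $s_1$; $1,4,5,7,10$ for $s_2$; $6,12$ for $s_3$), with the boundary degeneracies you anticipate handled inside those case descriptions. Your optional shortcut via the cardinality identity from Lemma~\ref{lem:identity_profile} is a harmless variant; the paper simply establishes exact coverage of the codomain directly, which yields injectivity and surjectivity at once.
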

\begin{proof}
To see that this is a bijection, is suffices to show that each element of $\C_1(z+s_1)\cup \C_2(z+s_2) \cup \C_3(z+s_3)$ is covered exactly once by $\delta_{z}$. 

First, we claim that $\C_{1}(z+s_{1})$ is covered by cases $2$, $3$, $8$, $9$ and $11$. Note that $\C(z+s_1)=\{(f',\ell')\in\mathbb{Z}^{2}\ |\ \max(0,f'-x_{3}+1) \leq \ell' \leq \min( f', 1+x_{1},x_{2},1+x_{1}+x_{2}-f')\}.$
In particular, the pairs $(f',\ell')\in\C(z+s_{1})$ covered by each of the five cases are those satisfying the following: 
\begin{itemize}
\item Case 2: $\ell'=1+x_{1}+x_{2}-f'\neq 0$.
\item Case 3: $\ell'=0=1+x_{1}+x_{2}-f'$ (this case only occurs if $x_{1}+x_{2}\leq x_{3}$ i.e., $2(x_{1}+x_{2})\leq L$ ).
\item Case 8: $\ell'\leq x_{1}$ and $\ell'<x_{1}+x_{2}-f'$.
\item Case 9: $\ell'\leq x_{1}$ and $\ell'=x_{1}+x_{2}-f'$.
\item Case 11: $\ell'=x_{1}+1\leq x_{1}+x_{2}-f'$ (this case only occurs for $x_{1}<x_{2}$).
\end{itemize}
Next, we show that the set $\C_{2}(z+s_{2})$ is covered by cases $1$, $4$, $5$, $7$ and $10$. We have \[\C(z+s_2)=\{(f',\ell')\in\mathbb{Z}^{2}\ |\ \max(0,f'-x_{3})\leq \ell'\leq \min(f', x_{1}-1,x_{2}+1,x_{1}+x_{2}-f')\}.\]
 In particular, the pairs $(\ell',f')\in\C(z+s_{2})$ covered by each of the five cases are those satisfying the following:
\begin{itemize}
\item Case 1:  $f'=x_{1}$ and $\ell'=x_{2}$ (this case only occurs if $x_{2}\leq x_{1}-1$).
\item Case 4: $\ell'=x_{1}+x_{2}-f'\leq x_{2}-1$.
\item Case 5: $\ell'=x_{2}+1$ (this case only occurs if $x_{2}+1\leq x_{1}-1$).
\item Case 7: $\ell'=f'\leq x_{2}-1$. %and $l'\leq x_{1}$.
\item Case 10: $\ell'\leq x_{1}+x_{2}-f'-1,x_{2},f'-1$ or $\ell'=f=x_{2}$ (the latter case only occurs for $x_{2}\leq x_{1}-1$).
\end{itemize}
Finally, we show that the set $\C_{3}(z+s_{3})$ is covered by cases $6$ and $12$.  Note that \[\C(z+s_3)=\{(f',\ell')\in\mathbb{Z}^{2} | \max(0,f'-x_{3}-1)\leq \ell'\leq \min(f',x_{1},x_{2}-1,x_{1}+x_{2}-1-f').\] In particular, the pairs $(f',\ell') \in \C(z+s_{3})$ covered by each of the five cases are those satisfying the following:
\begin{itemize}
\item Case 6: $\ell'\leq f'-1$.
\item Case 12: $\ell'=f'$.
\end{itemize}
We thus have dealt with every element of $\C_1(z+s_1)\cup \C_2(z+s_2) \cup \C_3(z+s_3)$.
\end{proof}

Note that the rules in the definition of $\delta_{z}$ only depend on $x_{1}$, $x_{2}$, $f$ and $\ell$, but not $L$. As a consequence, this bijection can be applied to any Motzkin path to yield a path in the $1/6$-plane, and if $L$ is the minimum sidelength of a triangle containing the resulting path then $L$ is the amplitude of the Motzkin path.

\section{Generalization to further dimension}
\label{s:generalization}

This section explains to what extent the results of the previous sections can be generalized. In fact, there is a natural extension of triangular paths to higher dimension (already introduced by \cite{MortimerPrellberg}) for which there still exists a bijective correspondence between forward and backward paths. More surprisingly, we can find in dimension $3$ a new bijection between two families of lattice walks, which is an analogue of the bijection between triangular paths and Motzkin path of bounded amplitude. 

\subsection{What can be extended in any dimension}

\subsubsection{Definition}

For dimension $d$, let $(e_1,e_2,e_3,\ldots,e_{d+1})$ denote the standard basis of $\R^{d+1}$. For some $L\in\N$, we define the subset $\Ss_{d,L}$ of $\N^{d+1}$ as the simplicial section of side length $L$ of the integer lattice:
$$\Ss_{d,L}=\{x_1\,e_1+\cdots+x_{d+1}\,e_{d+1} : x_1, \ldots, x_{d+1}\in\N, x_1+\cdots+x_{d+1}=L\}.$$
We will consider walks in this simplex using \textit{forward} steps $s_{j}=e_{j}-e_{j-1}$ for $1\leq j\leq d+1$ (with the convention that $s_{0}=s_{d+1}$) and \textit{backward} steps $-s_{j}$. Paths of $\Ss_{d,L}$ only using forward steps are again called \textit{forward paths}. The \textit{origin} of $\Ss_{d,L}$, denoted $\origin$, is defined as $L e_{d+1}$. The triangular lattice $\T_L$ can be recovered by setting $d=2$ -- in other words $\T_L = \Ss_{2,L}$.

{As in } the triangle case, forward paths of $\Ss_{d,L}$ starting from $\origin$ form a subfamily of Standard Young Tableaux. Precisely, they are in bijection with standard Young tableaux with $d$ rows or less with an extra restriction:
  for $i>L$, if there is a cell with label $\ell$ at position $i$ in the top row of the Young tableau, then there is a cell at position $i-L$ in the bottom row of the Young tableau with a label less than $\ell$. The enumeration of standard Young tableaux with a bounded number of rows is the object of a very active research -- see \cite{mishna} for a survey.

\subsubsection{Equinumeracy of forward and backward paths}

Defining direction vector as in Definition \ref{def:direction_vector}, the equivalent of Theorem \ref{theo:directions} still holds:
\begin{Theorem}
Given two sequences $W$ and $W'$ of $\{\Sf,\Sb\}^n$, the set of paths in $\Ss_{d,L}$ of direction vector $W$ are in bijection with the set of pyramid paths of direction vector $W'$. 
\label{theo:generic_forward}
\end{Theorem}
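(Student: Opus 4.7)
The plan is to reproduce, essentially verbatim, the argument used for Theorem~\ref{theo:directions} in the triangle case. The key is that all of the combinatorial structure used there survives in higher dimension: the flips depend only on cyclic adjacencies among the $s_i$, and the uniqueness of the rearrangement is witnessed by a tiling of a two‑dimensional square regardless of the dimension $d$ of the simplex in which the paths live.

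First, I would set up the generalized flips in $\Ss_{d,L}$. A \emph{swap flip} would replace two consecutive steps according to the rules
\[
(s_j,\overline{s_k}) \;\longleftrightarrow\; (\overline{s_k}, s_j) \quad \text{if } j\neq k, \qquad (s_k,\overline{s_k})\;\longleftrightarrow\;(\overline{s_{k-1}},s_{k-1}),
\]
and a \emph{last‑step flip} would replace the final step via $s_i \longleftrightarrow \overline{s_{i-1}}$ (indices taken mod $d+1$). The first task is then the analogue of Lemma~\ref{lemma:length2}: for any starting point $z\in \Ss_{d,L}$ and any endpoint $z'$, length‑two paths $(z\to z')$ of the two shapes FB and BF are equinumerous. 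I would prove this by checking case‑by‑case that $(\sigma,\overline{\tau})$ stays inside $\Ss_{d,L}$ iff $(\overline{\tau},\sigma)$ does whenever $\sigma+\overline{\tau}\neq 0$, and treating the degenerate case $z=z'$ separately, using that the number of valid pairs $(s_k,\overline{s_k})$ and $(\overline{s_{k-1}},s_{k-1})$ from $z$ depend on the same coordinate $x_{k-1}$ of $z$. Each of these cases is purely local: only the two coordinates $e_{j-1},e_j$ (or $e_{k-2},e_{k-1},e_k$) are involved, so extra ambient dimensions are inert.

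With this generalized length‑two lemma in hand, the induction of Theorem~\ref{theo:first} carries over word for word to show that forward and backward paths are equinumerous from any $z$, and more generally that the four items in the proof of Theorem~\ref{theo:directions} hold: the flips are well defined (same local verification), they are clearly reversible, and any two direction vectors can be connected via swap flips together with last‑step flips to adjust the number of $B$'s. So the real work is the uniqueness item (Item~4), which I would handle, as in the triangle case, via a tiling of the tilted square $\mathcal S_n$. The tiles here are indexed by pairs $(\sigma,\overline{\tau})\in \Sf\times \Sb$; the $(d+1)^2$ tiles encode exactly the swap rules above. Checking that the assignment $(\sigma,\overline{\tau})\mapsto$ (bottom labels) is a bijection from $\Sf\times\Sb$ to $\Sb\times\Sf$ is a finite combinatorial verification: the off‑diagonal pairs are sent to themselves transposed, while the diagonal pairs $(s_k,\overline{s_k})$ are sent to $(\overline{s_{k-1}},s_{k-1})$, sweeping out the diagonal of $\Sb\times\Sf$ bijectively. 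Uniqueness of the tiling then follows by exactly the induction of Proposition~\ref{prop:tiling}, and the bijection corresponding to a target direction vector $W'$ is read off by following the NE/SE path in $\mathcal S_n$ prescribed by $W'$.

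The part I expect to require the most care is the case analysis in the generalized Lemma~\ref{lemma:length2}: in dimension~$2$ there were only the subcases ``$\sigma$ is $s_i$ and $\overline{\tau}$ is $\overline{s_{i\pm 1}}$'', but in higher dimension the relation between the indices $j$ and $k$ of $\sigma=s_j$ and $\overline\tau=\overline{s_k}$ can be arbitrary, and one must ensure that staying inside the simplex $\Ss_{d,L}$ (rather than merely the positive cone) is preserved by the swap. The crucial point is that each flip touches at most three coordinates, and the constraints imposed by the simplex (non‑negativity of coordinates and $x_1+\cdots+x_{d+1}=L$) are already captured by the same two‑coordinate positivity analysis as in the triangle case, so no genuinely new obstruction appears. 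Once this verification is done, the proof of Theorem~\ref{theo:generic_forward} is a mechanical upgrade of Section~\ref{ss:bij_generic}.
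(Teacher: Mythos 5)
Your proposal is correct and follows exactly the route the paper takes: the paper's own proof of Theorem~\ref{theo:generic_forward} simply states that the argument of Theorem~\ref{theo:directions} applies almost verbatim with the same swap flips (indices mod $d+1$), and your write-up just makes explicit the details the paper leaves implicit (the extended case analysis of the length-two lemma, including the inert disjoint-support case, and the $(d+1)^2$-tile version of the uniqueness tiling). Nothing further is needed.
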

We can use the same proof almost \textit{verbatim}. In fact, the bijection uses swap flips, defined exactly as in Definition \ref{def:flips}:
\begin{align*}
(s_j,\overline{s_k}) \lra (\overline{s_k}, s_j)& \quad \mbox{if} \; j\not=k, \\ (s_k,\overline{s_k}) \lra (\overline{s_{k-1}},s_{k-1}) & \quad  \mbox{otherwise.}
\end{align*}
where, by convention, $s_0 = s_{d+1}$.

\subsection{Dimension 3}

It turns out that forward paths in dimension $3$ are {equinumerous} with another family of paths, as in the two dimensional case. We {will show this inductively, then give} a bijection {analogous} to those in Section~\ref{s:scaffolding}.

In dimension $3$, the set
$$\Ss_{3,L}=\{x_1\,e_1+x_2\,e_2+x_3\,e_3+x_4\,e_4: x_1, x_2, x_3, x_4\in\N, x_1+x_2+x_3+x_4=L\}$$
is a pyramidal lattice, as shown by Figure~\ref{figure:chef_doeuvre2} (left). We denote by $\Sf$ the set of forward steps, i.e., $\Sf=\{e_1-e_4,e_2-e_1,e_3-e_2,e_4-e_3\}$, and we denote by $\Sb$ the set of backward steps, i.e., $\Sb=-\Sf$. A \textit{pyramidal walk} is a walk in $\Ss_{3,L}$ using steps in $\Sf \cup \Sb$.

\begin{figure}
\begin{center}
\begin{minipage}{.45\textwidth}
\includegraphics[width = \textwidth]{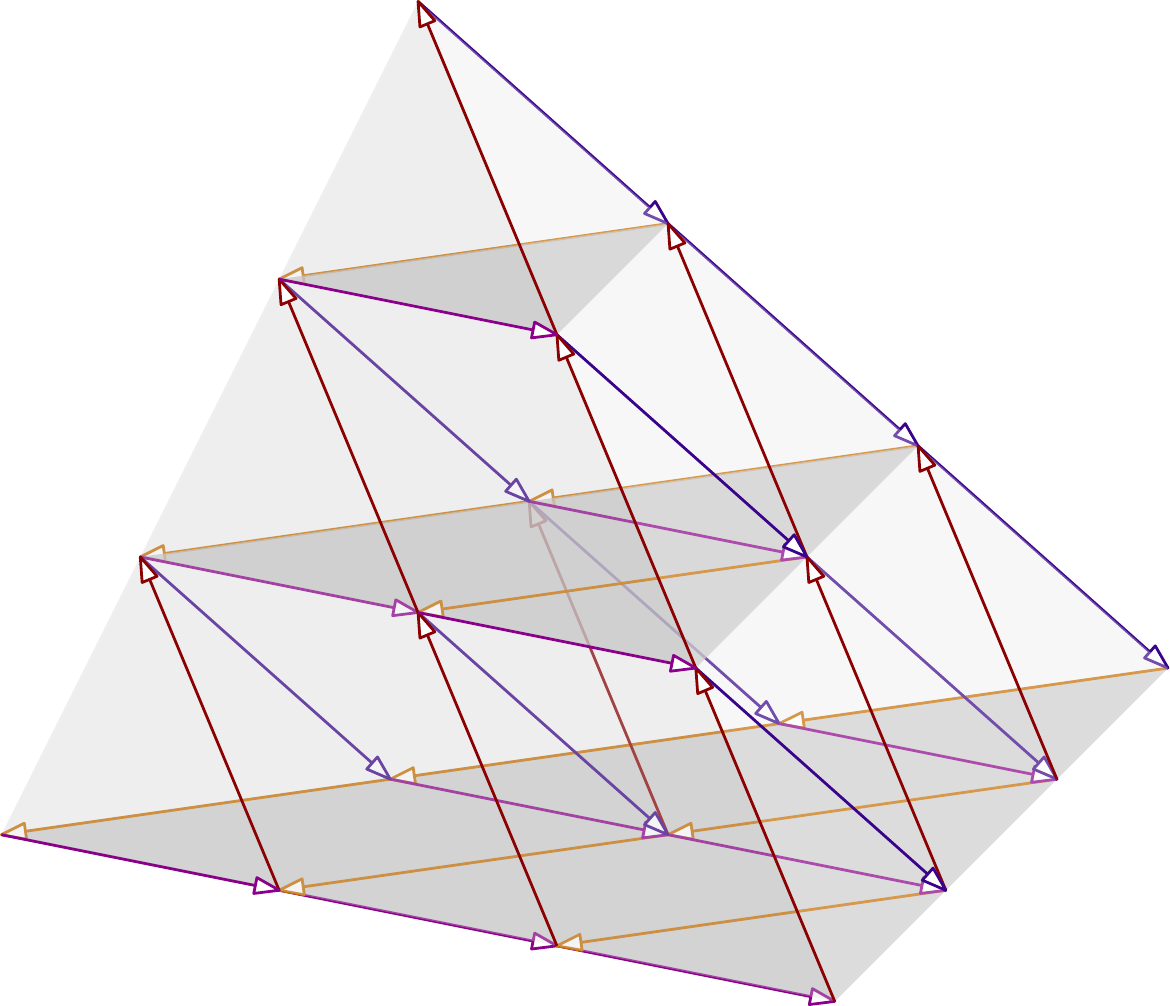}
\end{minipage}
\hspace*{\fill}
\begin{minipage}{.45\textwidth}
\includegraphics[width = \textwidth]{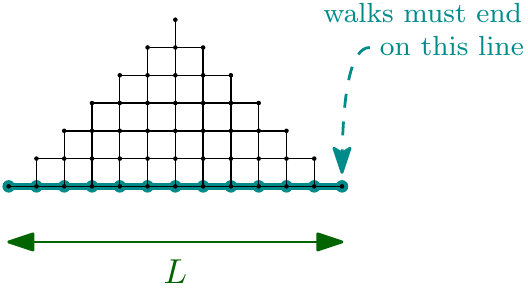}
\end{minipage}
\end{center}
\caption{\textit{Left.} The Pyramid $\Ss_{3,3}$. \textit{Right.} The waffle $W_{12}$. }
\label{figure:chef_doeuvre2}
\end{figure}

By reducing the dimension of the recurrence using the bijection between forward and backward paths, we find a family of paths in bijection with pyramidal walks:

\begin{Theorem}\label{theo:waffle_to_pyramid}
Define the {\em waffle} $W_{L}$ of size $L$ by
\[W_{L}=\{(i,j)\in\mathbb{N} : j\leq i\leq L-j\}\]
(see Figure~\ref{figure:chef_doeuvre2} (right) for a picture).
For $(i,j)\in W_{L}$, the number $w_{n,i,j}$ of square lattice walks in $W_{L}$, starting at $(i,j)$ and ending on the $y$-axis is given by
\[w_{n,i,j}=p_{n,i,j}-p_{n,i-1,j-1},\]
where $p_{n,i,j}$ is the number of forward (or equally backward) pyramid paths of length $n$ starting at the point $(i-j) e_{1}+je_{2}+(L-i)e_{4}$.
\end{Theorem}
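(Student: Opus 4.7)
My plan is to prove the identity $w_{n,i,j}=p_{n,i,j}-p_{n,i-1,j-1}$ by induction on $n$, adapting the strategy used in Proposition~\ref{prop:motzkin_inductive}. Set $z_{i,j}:=(i-j)e_1+je_2+(L-i)e_4$. The decisive observation is that $z_{i,j}$ has $x_3=0$, so from it only three of the four forward steps are legal ($s_4$ requires $x_3\geq 1$), and likewise only three of the four backward steps are legal ($\bar s_3$ is forbidden). The steps $s_1$ and $s_2$ respectively land on $z_{i+1,j}$ and $z_{i,j+1}$ (again $x_3=0$), while $s_3$ lands on a point with $x_3=1$; symmetrically, $\bar s_1$ and $\bar s_2$ land on $z_{i-1,j}$ and $z_{i,j-1}$, while $\bar s_4$ lands on a point with $x_3=1$. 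Writing $Q_n(i,j)$ and $R_n(i,j)$ for the numbers of paths from these last two $x_3=1$ points, the forward and backward recurrences read
\begin{align*}
p_{n+1,i,j}&=p_{n,i+1,j}+p_{n,i,j+1}+Q_n(i,j),\\
p_{n+1,i,j}&=p_{n,i-1,j}+p_{n,i,j-1}+R_n(i,j),
\end{align*}
the second being legitimate thanks to Theorem~\ref{theo:generic_forward}.

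The crux is a direct shift-matching identity: $Q_n(i,j)=R_n(i-1,j-1)$, since both count paths from the point $(i-j)e_1+(j-1)e_2+e_3+(L-i)e_4$. Hence, computing $w_{n+1,i,j}$ by using the forward recurrence on $p_{n+1,i,j}$ and the backward recurrence on $p_{n+1,i-1,j-1}$ kills the $x_3=1$ contributions, leaving
\[w_{n+1,i,j}=p_{n,i+1,j}+p_{n,i,j+1}-p_{n,i-2,j-1}-p_{n,i-1,j-2}.\]
Regrouping the right-hand side as
\[(p_{n,i+1,j}-p_{n,i,j-1})+(p_{n,i,j+1}-p_{n,i-1,j})+(p_{n,i,j-1}-p_{n,i-1,j-2})+(p_{n,i-1,j}-p_{n,i-2,j-1})\]
produces exactly $w_{n,i+1,j}+w_{n,i,j+1}+w_{n,i,j-1}+w_{n,i-1,j}$, the transition relation of a simple walk on the square lattice.

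It then remains to check the initial and boundary data. The base case $w_{0,i,j}=\mathbf{1}_{j=0}$ (walks ending on $\{j=0\}$) is immediate from $p_{0,i,j}=1$ on valid pyramid points and $p_{0,i-1,j-1}=0$ exactly when $j=0$. For the boundaries I must show $w_{n,i',j'}=0$ for $(i',j')\notin W_L$. The three cases $j'<0$, $i'<j'$, and $i'>L$ are all immediate: one of the pyramid coordinates becomes negative and both terms in $w_{n,i',j'}$ vanish. The main obstacle will be the remaining boundary $i'+j'=L+1$, which is the only way a neighbor of a waffle point can lie outside $W_L$ along the slanted edge. Here I plan to exploit the symmetry of $\Ss_{3,L}$ swapping $e_2\leftrightarrow e_4$: this involution reverses every forward step, so combined with Theorem~\ref{theo:generic_forward} it yields the identity $p_{n,i,j}=p_{n,L-j,L-i}$; specialising it to the line $i+j=L+1$ collapses it to $p_{n,i,j}=p_{n,i-1,j-1}$, so that $w_{n,i,j}=0$ there and the induction closes.
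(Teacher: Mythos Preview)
Your proof is correct and follows essentially the same route as the paper's own argument: derive the forward and backward one-step recurrences at the face $x_3=0$, cancel the $x_3=1$ contribution (your $Q_n(i,j)=R_n(i-1,j-1)$ is exactly the paper's use of a single auxiliary $q_{n,i,j}$), regroup to obtain the square-lattice recursion for $w$, and then check the boundary and initial data. Your treatment of the delicate boundary $i+j=L+1$ via the explicit $e_2\leftrightarrow e_4$ involution of $\Ss_{3,L}$ together with Theorem~\ref{theo:generic_forward} is in fact more transparent than the paper's terse ``by symmetry, $w_{n,i,j}=-w_{n,L+1-j,L+1-i}$'', but the two arguments are equivalent (your identity $p_{n,i,j}=p_{n,L-j,L-i}$ is precisely what underlies the paper's antisymmetry).
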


\begin{proof}
We prove this using an inductive approach. We define $q_{n,i,j}$ to be the number of such paths starting at the point $(i-j) e_{1}+je_{2}+e_{3}+(L-i-1)e_{4}$ (this is $0$ if the starting point is outside the region).

Considering the first step in a forwards path of length $n+1$ starting at $(i-j) e_{1}+je_{2}+(L-i)e_{4}$ yields the following equation for $n,i,j\geq 0$ satisfying $i\leq j\leq L$:
\[p_{n+1,i,j}=p_{n,i+1,j}+p_{n,i,j+1}+q_{n,i-1,j-1}.\]
Using the same method for backward paths yields
\[p_{n+1,i,j}=p_{n,i-1,j}+p_{n,i,j-1}+q_{n,i,j}.\]
Canceling the $q$ terms, we obtain the following equation as long as $1\leq j\leq i\leq L$:
\[p_{n+1,i,j}-p_{n+1,i-1,j-1}=p_{n,i+1,j}+p_{n,i,j+1}-p_{n,i-2,j-1}-p_{n,i-1,j-2}.\]
Finally, writing $w_{n,i,j}:=p_{n,i,j}-p_{n,i-1,j-1}$, we have the following recurrence for $w$:
\[w_{n+1,i,j}=w_{n,i+1,j}+w_{n,i,j-1}+w_{n,i,j+1}+w_{n,i-1,j},\]
which has only positive coefficients. By analysing this equation on the boundary, we deduce that it holds for $0\leq j\leq i\leq L+1$, if we define $w_{n,i,j}=0$ for $i,j$ outside this region. Finally the initial condition for $w_{0,i,j}$ follows from $p_{0,i,j}=1$ for $0\leq j\leq i\leq L$:
\begin{align*}
		w_{0,i,j} &= 0, & \text{ for } & 1\leq j\leq i\leq L, \\
		w_{0,i,0} &= 1, & \text{ for } & 0\leq i\leq L,\\
		w_{0,L+1,j} &= -1, & \text{ for } & 1\leq j\leq L+1, \\
		w_{0,L+1,0} &= 0.&&
	\end{align*}
	These initial conditions along with the recurrence uniquely define the terms $w_{n,i,j}$.
Now, by symmetry, $w_{n,i,j}=-w_{n,L+1-j,L+1-i}$, and in particular, $w_{n,i,L+1-i}=0$, so we only need to consider the region $i+j\leq L$. Within this region, all terms are positive, so $w_{n,i,j}$ can be understood combinatorially. The combinatorial interpretation of the recurrence is precisely the statement of the theorem: $w_{n,i,j}$ is the number of square lattice walks starting at $(i,j)$ and ending on the $y$-axis, which are confined to the region 
$W_{L}=\{(i,j)\in\mathbb{N}:i\leq j\leq L-i\}$.
\end{proof}

In particular, $p_{n,0,0}=w_{n,0,0}$. 

\begin{Remark}
If we apply the transformation $(x,y) \mapsto (x-y,y)$ to waffle walks, we remark that pyramidal walks starting at $\origin$ are in bijection with \emph{Gouyou-Beauchamps walks}, i.e. walks with North-West, West, East, South-East steps, going from $(0,0)$ to a point on the $x$-axis and confined in the part of the positive quarter of plane below the line $x + 2y = L$. This is consistent with the fact that standard Young tableaux with $4$ rows or less are in bijection with Gouyou-Beauchamps walks returning to the $x$-axis confined in the quarter of plane~\cite{gouyouBeauchamps}.
\end{Remark}

More generally, the following proposition relates the enumeration of pyramid walks starting at any point to waffle walks.

\begin{Proposition}
\label{prop:waffle_to_pyramid} 
The number $p_{n}(z)$ of length $n$ pyramid walks starting at a point $z=x_{1}e_{1}+x_{2}e_{2}+x_{3}e_{3}+x_{4}e_{4}$ is equal to the number of length $n$ waffle walks starting at a point in the set $W(z)$, defined by
%\[W(z):=\{(i,j):|x_{1}-x_{3}|\leq i-j\leq x_{1}+x_{3}\leq i+j\leq x_{1}+x_{3}+2\min(x_{2},x_{4}),~2|i+j+x_{1}+x_{3}\}\]
\[W(z):=\{(x_{1}+x_{3}+p-q,p+q):p,q\in\mathbb{N},~p\leq\min(x_{2},x_{4}),~q\leq\min(x_{1},x_{3})\}.\]
\end{Proposition}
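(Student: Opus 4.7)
The plan is to prove the proposition by induction on $n$, generalizing the inductive argument used in the proof of Theorem~\ref{theo:waffle_to_pyramid} (which is the special case $x_3 = 0$). Set $\tilde p_n(z) := \sum_{(I,J) \in W(z)} w_{n,I,J}$, extending $w_n$ by zero outside the waffle $W_L$; the goal is to show $\tilde p_n(z) = p_n(z)$ for every $z \in \Ss_{3,L}$ and every $n \geq 0$. The base case $n = 0$ is immediate: the only pair $(p,q)\in\mathbb{N}^2$ with $p+q=0$ is $(0,0)$, yielding the point $(x_1+x_3,0)\in W_L$ at which $w_0 = 1$, so $\tilde p_0(z) = 1 = p_0(z)$.

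For the inductive step, combine the forward recurrence $p_{n+1}(z) = \sum_{s \in \Sf,\, z+s\in\Ss_{3,L}} p_n(z+s)$ with the waffle recurrence $w_{n+1,I,J} = \sum_{(I',J')\sim(I,J)} w_{n,I',J'}$ (neighbors outside $W_L$ contributing zero). Apply the inductive hypothesis on the left: both $p_{n+1}(z)$ and $\tilde p_{n+1}(z)$ rewrite as linear combinations $\sum_{(I,J)} w_{n,I,J}\cdot\mu(I,J)$ of the values $w_n$. Equality then reduces to the pointwise identity
\[
m_s(I^\star,J^\star) = m_\sim(I^\star,J^\star) \qquad\text{for every } (I^\star,J^\star) \in W_L,
\]
where $m_s(I^\star,J^\star)$ counts forward steps $s\in\Sf$ with $z+s\in\Ss_{3,L}$ and $(I^\star,J^\star)\in W(z+s)$, and $m_\sim(I^\star,J^\star)$ counts the lattice neighbors of $(I^\star,J^\star)$ that lie in $W(z)$.

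The key simplification is to introduce the half-shifted coordinates $a := \tfrac{1}{2}(I^\star+J^\star-x_1-x_3+1)$ and $b := \tfrac{1}{2}(J^\star-I^\star+x_1+x_3+1)$. A parity check shows that both $m_s$ and $m_\sim$ vanish unless $(I^\star,J^\star)$ has the parity opposite to that of $W(z)$, in which case $a,b$ are integers. In that regime, a direct computation establishes that the preimage of $(I^\star,J^\star)$ in the $(p,q)$-parameterization of $W(z+s_i)$ is $(a-1,b)$ for $i\in\{1,3\}$ and $(a,b-1)$ for $i\in\{2,4\}$, while the four lattice neighbors of $(I^\star,J^\star)$ correspond to the four corners of the $2\times 2$ square $\{a-1,a\}\times\{b-1,b\}$ in the $(p,q)$-parameterization of $W(z)$. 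Both $m_s$ and $m_\sim$ thereby become explicit combinatorial statements about containment in the parameter rectangle $R(z) = [0,\min(x_2,x_4)]\times[0,\min(x_1,x_3)]$ and its four perturbations $R(z+s_i)$, each of which differs from $R(z)$ by shifting one side by $\pm 1$.

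The main obstacle is the verification of $m_s = m_\sim$: this is a finite but tedious case analysis, since the sides $\min(x_i,x_j)\pm 1$ of the perturbed rectangles change in ways depending on which argument achieves each minimum, so one must split on the signs of $x_2-x_4$ and $x_1-x_3$ and carefully handle the corner positions $a\in\{0,\min(x_2,x_4)+1\}$ and $b\in\{0,\min(x_1,x_3)+1\}$. Crucially, although the identity may fail pointwise for $(I^\star,J^\star)\notin W_L$ (for instance at $(x_1+x_3,-1)$), such points carry $w_{n,I^\star,J^\star}=0$, so those failures are harmless to the induction. Once the identity is established on $W_L$, the induction closes and the proposition follows.
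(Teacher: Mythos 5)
Your proposal is correct in substance, but it takes a genuinely different route from the paper. The paper proves this proposition \emph{bijectively}: it introduces the three-dimensional profile $\C(z)$ together with the correspondence $h_z:\C(z)\to W(z)$, defines scaffoldings (Definition~\ref{def:scaffolding3d}), exhibits an explicit one in Figure~\ref{diamond_scaffolding}, and deduces the equality of cardinalities from Algorithm~\ref{algo:scaffolding3d}; the tedious case-by-case check that this explicit scaffolding is bijective is omitted. You instead argue purely enumeratively, by induction on the length, in the spirit of the proofs of Theorem~\ref{theo:anywhere} and Theorem~\ref{theo:waffle_to_pyramid}: after the (correct) implicit reading of the statement --- $p_n(z)$ counts \emph{forward} pyramid paths and the waffle walks are those ending on the axis, as in Theorem~\ref{theo:waffle_to_pyramid} --- the inductive step reduces to the pointwise identity $m_s=m_{\sim}$ on $W_L$, which plays exactly the role that Lemma~\ref{lem:identity_profile} plays in two dimensions. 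Your reduction is sound, your change of coordinates $(a,b)$ and the identification of the preimages $(a-1,b)$, $(a,b-1)$ and of the $2\times 2$ square of neighbours are correct, and the key identity does hold: in each of the sign cases the difference $m_{\sim}-m_s$ is supported only at points outside the waffle, namely at $(x_1+x_3,-1)$ (i.e.\ $a=b=0$) and, when $x_2=x_4$ or $x_1=x_3$, at points violating $I\le L-J$ or $J\le I$ --- so your observation that such failures are harmless is exactly the point that makes the induction close. The trade-off between the two routes is clear: the paper's scaffolding yields an explicit bijection (the raison d'\^etre of the paper), whereas your argument is more elementary and self-contained but establishes only equinumeracy. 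The one thing still to be done on your side is to actually carry out the advertised case analysis on $\mathrm{sgn}(x_2-x_4)$ and $\mathrm{sgn}(x_1-x_3)$ rather than describe it; as written it is a sketch, though comparable in completeness to the paper's own omission of its scaffolding verification.
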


Now, we will give a bijective proof of this.  The proof is via a scaffolding, analogous to Definition \ref{def:scaffolding}. Again, before we define scaffolding we define the profile of a point.

\begin{Definition}[Profile]
For a point $z=x_{1}e_{1}+x_{2}e_{2}+x_{3}e_{3}+x_{4}e_{4}$, we define the {\em profile} $\C(z)$ of $z$ by \[\C(z):=\{(p,q)\in\mathbb{N}^{2}:~p\leq\min(x_{2},x_{4}),~q\leq\min(x_{1},x_{3})\}.\] We have a natural bijection $h_{z}:\C(z)\to W(z)$ defined by $h_{z}(p,q):=(x_{1}+x_{3}+p-q,p+q)$.
\end{Definition}

\begin{figure}
\begin{center}
\includegraphics[width=0.8\textwidth]{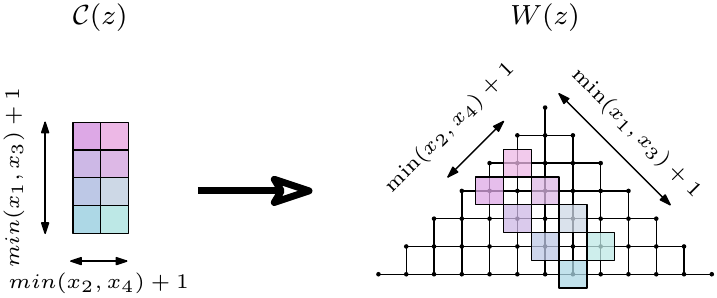}
\end{center}
\caption{The sets $\mathcal C(z)$ and $W(z)$ for $z = 4 e_1 + e_2 + 3 e_3 + 4 e_4$.}
\label{figure:natural_bijection}
\end{figure}

For $z\in \Ss_{3,L}$, we define the set 
$$A(z):=\{(c,s)\in \C(z)\times \{\uparrow,\rightarrow,\downarrow,\leftarrow\} : s \mbox{ is an allowed step from } h_{z}(c)\}.$$
For $i\in\{1,2,3,4\}$, we also introduce the notation
$$\C_i(z):= \{(s_i,c) : c\in\C(z)\}.$$
The set $\C_i(z)$ is thus a subset of $\Sf\times \C(z)$, having same cardinality as $\C(z)$, since all the elements of $\C_i(z)$ have the same first coordinate $s_i$.

\begin{Definition}[Scaffolding] Let us fix the size $L$ of the pyramid.
A \emph{scaffolding} is a collection of functions $(\delta_z)_{z \in \Ss_{3,L}}$, such that for each $z\in \Ss_{3,L}$, the function 
$$\delta_z : A(z) \to \C_1(z+s_1)\cup \C_2(z+s_2) \cup \C_3(z+s_3) \cup \C_4(z+s_4)$$
is a bijection and whenever $\delta_z(c,s)=(s_{j},c_{j})$, we have $h_{z}(c)+s=h_{z+s_{j}}(c_{j})$.
\label{def:scaffolding3d}
\end{Definition}

\begin{figure}
\begin{center}
\includegraphics[width=0.9\textwidth]{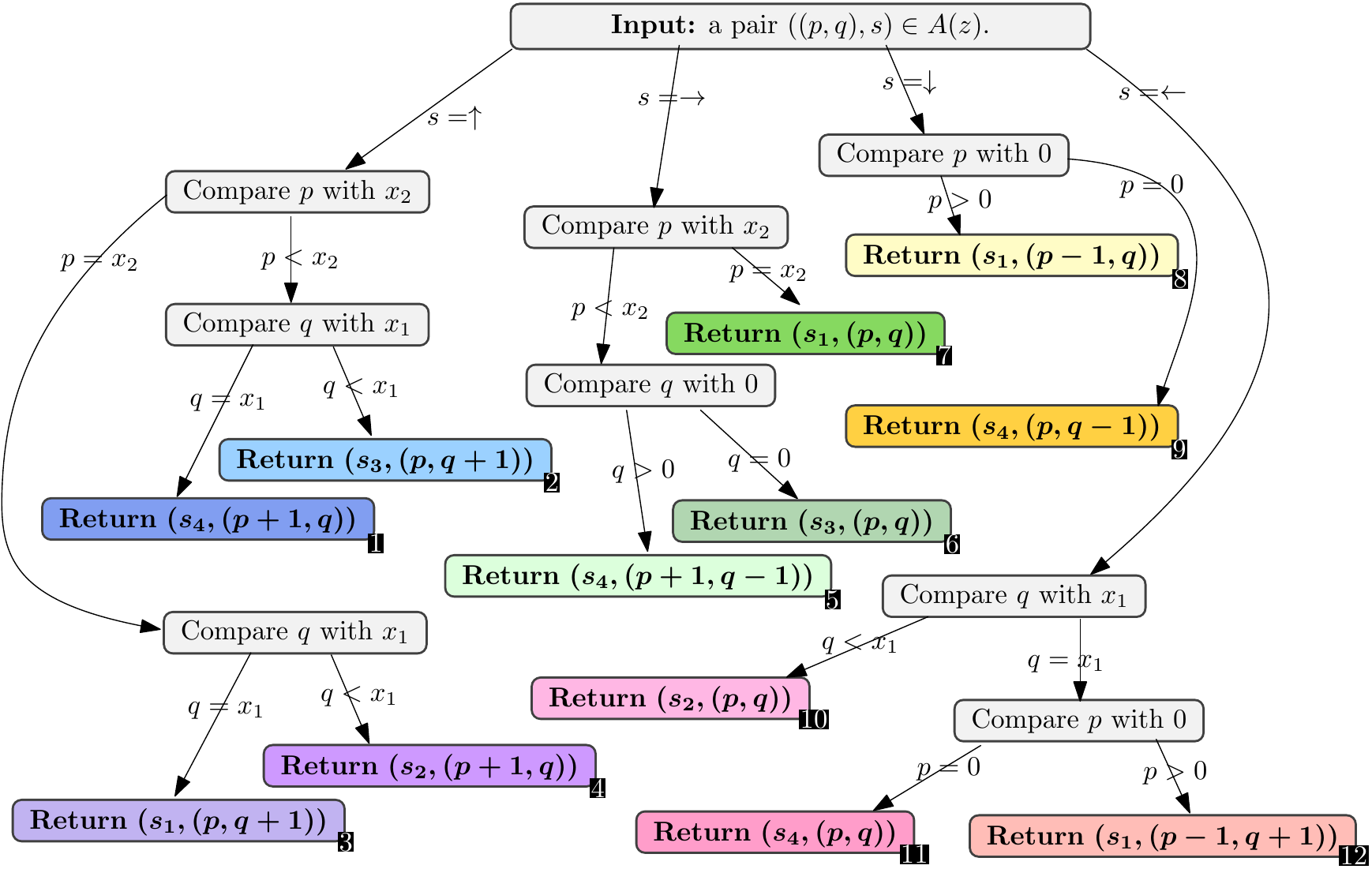}
\end{center}
\caption{A diagram defining the scaffolding $\delta_{z}$.}
\label{diamond_scaffolding}
\end{figure}

Figure~\ref{diamond_scaffolding} shows an example of the sets $W(z)$ and $\C(z)$.

\begin{figure}
\begin{center}
\includegraphics[width=0.9\textwidth]{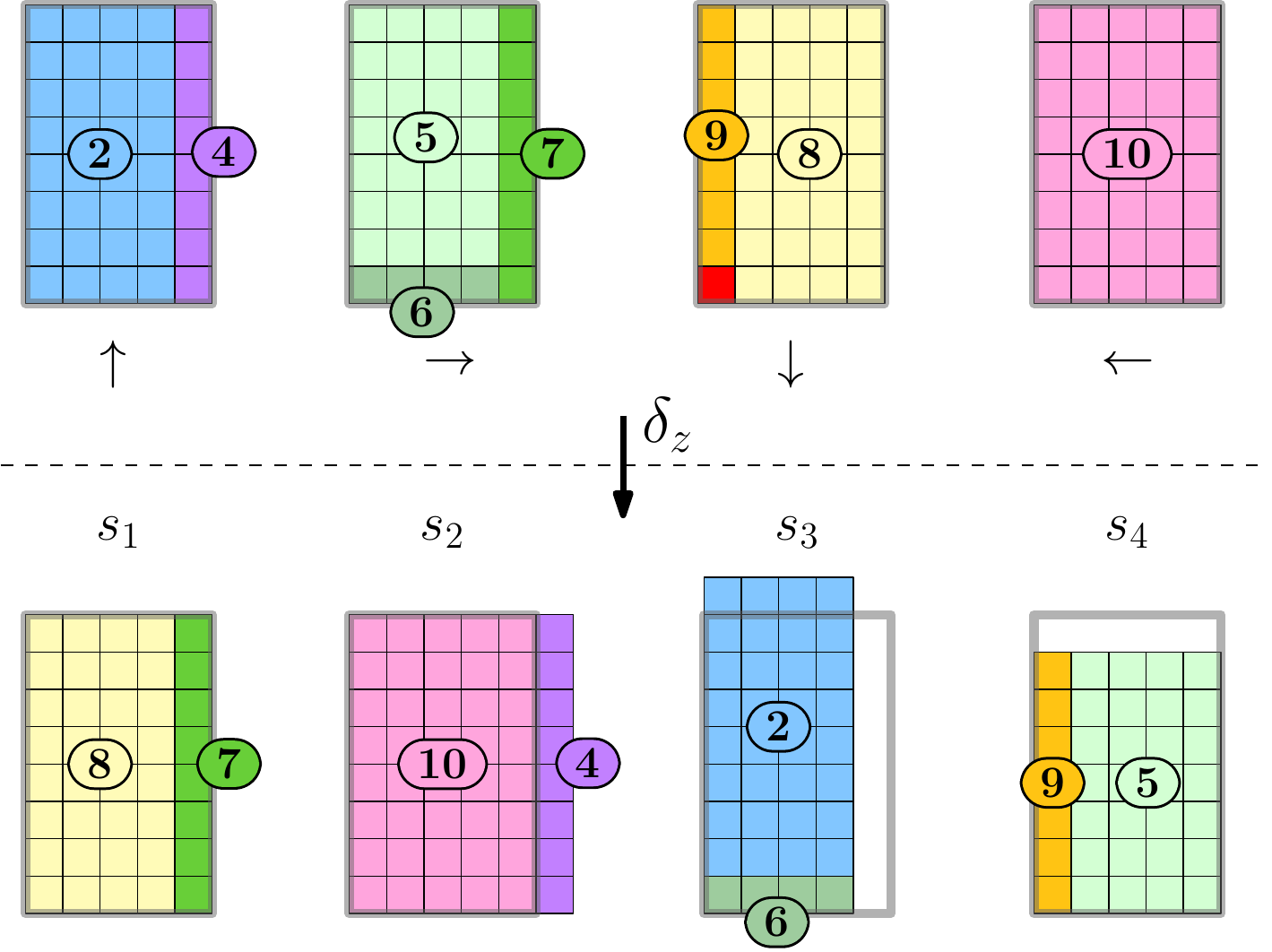}
\end{center}
\caption{A geometric representation of $\delta_z$ for $x_1 = 8$, $x_2 = 4$, $x_3 = 6$ and $x_4 = 7$. The numbers of the colored zones match with cases of the diagram of Figure~\ref{diamond_scaffolding}.}
\label{scaffolding_waffle}
\end{figure}

An explicit scaffolding $\delta_{z}$ is given in Figure \ref{diamond_scaffolding}. The proof of the bijectivity of $\delta_z$ is omitted (because of its tediousness --- it is a case-by-case proof, similar to the one of Theorem~\ref{theo:trapezium}), but some particular configuration is illustrated by Figure~\ref{scaffolding_waffle}. 

Given such a scaffolding, a bijection for each point $z_{c}\in \Ss_{3,L}$ from the set of waffle walks starting at a point in the set $W(z_{c})$ to the set of pyramid walks starting at $z_{c}$ is given by Algorithm~\ref{algo:scaffolding3d}.

\begin{algorithm}[caption={Bijection from waffle paths to pyramid paths, given a scaffolding $(\delta_z)_{z \in \Ss_{3,L}}$ (for \textit{scaffolding}, see Definition~\ref{def:scaffolding3d}).}, label={algo:scaffolding3d}]
metadata: a scaffolding $\delta_{z}$
input: A point $(p_{c},q_{c})\in \C(z_{c})$, a waffle path w starting at $h_{z_{c}}(p_{c},q_{c})$
output: a pyramid path y starting at $z_{c}$.
n $\gets$ length of $w$;
y $\gets$ empty path;
z $\gets z_{c}$;
p $\gets p_{c}$;
q $\gets q_{c}$; 
for i from 1 to n 
do ($\sigma$, p, q) $\gets$ $\delta_{\textrm z}$(f, q, w[i]);
   add $\sigma$ to the end of y;
   z $\gets$ z + $\sigma$; 
return y;
\end{algorithm}

 In the following corollary {of} Theorem \ref{theo:waffle_to_pyramid}, we enumerate pyramidal walks starting at~$\origin$ using the relation $p_{n,0,0}=w_{n,0,0}$, which relates their enumeration to that of waffle walks.
This partially answers another open question of Mortimer and Prellberg~\cite[Section 4.1]{MortimerPrellberg}.

\begin{Corollary}The generating function
\[P(t)=\sum_{t=0}^{\infty}p_{n,0,0}t^{n}\]
for pyramid walks starting in a corner is given by
%\[P(t)=\frac{1}{(L+4)^2}\sum_{\substack{j,k=1\\2\nmid j,k}}^{2L+7}\frac{\alpha^{-k}(1-\alpha^{2j})(1+\alpha^k)(\alpha^{-k}-\alpha^{-j}-\alpha^{j}+\alpha^{k})}{1-(\alpha^{j}+\alpha^{-j}+\alpha^{k}+\alpha^{-k})t},\]
\[P(t)=\frac{1}{(L+4)^2}\sum_{\substack{1\leq j<k\leq L+3\\2\nmid j,k}}^{L+4}\frac{(\alpha^{k}+\alpha^{-k}-\alpha^{j}-\alpha^{-j})^2(2+\alpha^{j}+\alpha^{-j})(2+\alpha^{-k}+\alpha^{k})}{1-(\alpha^{j}+\alpha^{-j}+\alpha^{k}+\alpha^{-k})t},\]
where $\alpha=e^{\frac{i\pi}{L+4}}$.
\label{cor:enum_pyramid_walks}
\end{Corollary}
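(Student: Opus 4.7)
The plan is to combine Theorem~\ref{theo:waffle_to_pyramid} with a spectral decomposition of the NESW transition operator on the waffle. By Theorem~\ref{theo:waffle_to_pyramid} we have $p_{n,0,0} = w_{n,0,0}$, so $P(t)$ is the generating function of waffle walks of length $n$ from $(0,0)$ ending on the $y$-axis. Setting $N = L + 4$ and using the coordinates $(I, J) = (i+1, j+2)$, the waffle becomes the alcove $\{1 \leq I < J,\ I + J \leq N - 1\}$ and the absorbing shells just outside $W_L$ become the three lines $I = 0$, $I = J$ and $I + J = N$; the stray shell-point $(0,-1)$ reachable from $(0,0)$ by a South step lands on $I = J$ and is handled automatically by the same boundary condition.

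Next I would build eigenfunctions by antisymmetrizing plane waves,
\[
  \phi_{j,k}(I, J) = \sin\!\frac{j\pi I}{N}\sin\!\frac{k\pi J}{N} - \sin\!\frac{k\pi I}{N}\sin\!\frac{j\pi J}{N}.
\]
Vanishing on $I = 0$ and $I = J$ is immediate, while the identity $\sin(k\pi - x) = (-1)^{k+1}\sin x$ forces vanishing on $I + J = N$ to require $j \equiv k \pmod 2$. The eigenvalue is $\lambda_{j,k} = 2\cos(j\pi/N) + 2\cos(k\pi/N) = \alpha^j + \alpha^{-j} + \alpha^k + \alpha^{-k}$, and the pairs $1 \leq j < k \leq N-1$ with $j \equiv k \pmod 2$ yield an orthogonal basis of the correct dimension $|W_L|$.

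Plugging into the spectral formula $w_n = \sum_{j < k} \lambda_{j,k}^n\, \phi_{j,k}(1, 2)\, S_{j,k}/\|\phi_{j,k}\|^2$, where $(1,2)$ is the image of $(0,0)$ and $S_{j,k} := \sum_{J=2}^{N-2} \phi_{j,k}(1, J)$ sums over endpoints on the $y$-axis, reduces the problem to three short computations. The starting value $\phi_{j,k}(1, 2) = 2\sin(j\pi/N)\sin(k\pi/N)(\cos(k\pi/N) - \cos(j\pi/N))$ comes from a product-to-sum identity. The endpoint sum $S_{j,k}$ is evaluated via the closed form $\sum_{m=2}^{N-2}\sin(k\pi m/N) = \cot(k\pi/(2N)) - 2\sin(k\pi/N)$ for $k$ odd and $0$ for $k$ even, the latter forcing $S_{j,k} = 0$ unless both $j, k$ are odd, which explains the parity restriction in the corollary. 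The norm $\|\phi_{j,k}\|^2 = N^2/8$ follows by summing $\phi_{j,k}^2$ over the full square $[1, N-1]^2$ (equal to $N^2/2$ by $\sin$ orthogonality) and halving twice: first across the diagonal $I = J$, using that $\phi_{j,k}$ is antisymmetric under $(I, J) \leftrightarrow (J, I)$ and vanishes on the diagonal; then across the antidiagonal $I + J = N$, using the symmetry $\phi_{j,k}^2(N-I, N-J) = \phi_{j,k}^2(I, J)$ and $\phi_{j,k} \equiv 0$ on the antidiagonal.

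Combining everything and applying the half-angle identities $2 + \alpha^j + \alpha^{-j} = 4\cos^2(j\pi/(2N))$ and $\alpha^k + \alpha^{-k} - \alpha^j - \alpha^{-j} = -4\sin((j+k)\pi/(2N))\sin((k-j)\pi/(2N))$ rewrites the summand in the exact form of the corollary, the global $1/(L+4)^2$ emerging from $1/\|\phi_{j,k}\|^2 = 8/N^2$ together with the trigonometric collapse. The main obstacle I expect is the careful bookkeeping for the 4-fold descent in the norm calculation: verifying that the descent works requires that both the diagonal $I = J$ and the antidiagonal $I + J = N$ lie entirely in the zero set of $\phi_{j,k}$, so that the boundary contributions in each halving step are nil.
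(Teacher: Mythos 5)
Your proposal is correct, and it takes a genuinely different route from the paper. The paper proves the corollary by the reflection principle: it treats the waffle as a Weyl chamber, reflects the offending portion of a walk in the lines $y=-1$, $y-x=-1$, $x+y=L+1$, expresses the confined count as a signed count of \emph{unconstrained} walks ending in the image lattices $A_L$ and $B_L$, and then extracts those counts from $F(t,a,b)$ by a root-of-unity filter over $(2L+8)$-th roots of unity. You instead diagonalize the Dirichlet transition operator directly: after the shift $(I,J)=(i+1,j+2)$ the waffle becomes the alcove $\{1\leq I<J,\ I+J\leq N-1\}$ with killing on $I=0$, $I=J$, $I+J=N$ (and you correctly note that the stray point $(0,-1)$ lands on $I=J$), the antisymmetrized sine products $\phi_{j,k}$ with $j\equiv k \pmod 2$ are eigenvectors with eigenvalue $\alpha^{j}+\alpha^{-j}+\alpha^{k}+\alpha^{-k}$, and the spectral expansion plus your three evaluations (start value $\phi_{j,k}(1,2)$, endpoint sum $S_{j,k}$, norm $N^2/8$ by the two unfoldings) reproduces the stated summand exactly --- I checked that $\tfrac{8}{N^2}\phi_{j,k}(1,2)S_{j,k}$ equals $\tfrac{1}{N^2}(\alpha^{k}+\alpha^{-k}-\alpha^{j}-\alpha^{-j})^2(2+\alpha^{j}+\alpha^{-j})(2+\alpha^{k}+\alpha^{-k})$ for odd $j<k$, and that $S_{j,k}=0$ for even pairs, which is where the parity restriction comes from. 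The two methods are of course Fourier-dual (your eigenfunctions are the characters antisymmetrized over the very reflection group the paper folds with), but the spectral route buys a term-by-term derivation of the partial-fraction form, one summand per eigenvalue, and makes the odd-odd restriction transparent, whereas the paper's unfolding stays closer to a combinatorial involution but leaves the final coefficient bookkeeping (``adding and subtracting these as appropriate'') rather sketchy. Two small points to nail down in a full write-up: completeness of your eigenbasis, which follows from the dimension count given by the bijection $(I,J)\mapsto(J-I,\,I+J)$ between alcove points and same-parity pairs $1\leq j<k\leq N-1$, together with the fact that the positive norm $N^2/8$ already guarantees each restricted $\phi_{j,k}$ is nonzero; and the paper's own waffle/axis conventions are transposed between the statement and the end of the proof of Theorem~\ref{theo:waffle_to_pyramid} --- you silently adopted the consistent reading (endpoints on the segment $I=1$, $2\leq J\leq N-2$), which is the right one.
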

\begin{proof}
To prove this, we relate walks confined to the waffle to unconfined walks using the reflection principle~\cite{gessel1992random}, which is possible because the waffle $W_L$ forms a Weyl chamber of some reflection group.

Let $(x,y)$ be a point inside the waffle, let $\Omega$ be the set of unconstrained square lattice walks starting at $(x,y)$ and let $\Omega'$ be the set of walks in the waffle starting at $(x,y)$. Let $\ell_{1}$, $\ell_{2}$ and $\ell_{3}$ be the lines just outside the boundary of $W_{L}$, defined by $y=-1$, $y-x=-1$ and $x+y=L+1$ respectively. We consider the involution $f:\Omega\setminus \Omega'\to\Omega\setminus \Omega'$ defined by reflecting the section of the walk after its first intersection with one of the lines $\ell_{1}$, $\ell_{2}$ and $\ell_{3}$ in that line.

Now, define
\begin{align*}T_{L}&:=((2L+8)\mathbb{Z})\times((2L+8)\mathbb{Z})\cup (L+4+(2L+8)\mathbb{Z})\times(L+4+(2L+8)\mathbb{Z})\\
A_{L}&:=T_{L}\cup\left((-1,-3)+T_{L}\right)\cup\left((-4,-2)+T_{L}\right)\cup\left((-3,1)+T_{L}\right)\\
B_{L}&:=\left((-1,1)+T_{L}\right)\cup\left((0,-2)+T_{L}\right)\cup\left((-3,-3)+T_{L}\right)\cup\left((-4,0)+T_{L}\right).
\end{align*}
Then the involution $f$ sends walks in $\Omega\setminus \Omega'$ ending at a point in $A_{L}$ to walks ending at a point in $B_{L}$ and vice-versa. The only walks in $\Omega'$ ending at a point in $A_{L}$ (or $B_{L}$) are those ending at $(0,0)$. Hence the number of waffle walks of a given length from $(x,y)$ to $(0,0)$ is equal to the number of (uncontrained) walks of the same length from $(x,y)$ to a point in $A_{L}$ minus the number of such walks from $(x,y)$ to a point in $B_{L}$. By shifting the starting point, this is the number of walks from a point in $\{(x,y),(x+1,y+3),(x+4,y+2),(x+3,y-1)\}$ to a point in $T_{L}$ minus the number of walks from a point in $\{(x+1,y-1),(x,y+2),(x+3,y+3),(x+4,y)\}$ to a point in $T_{L}$. These numbers can easily be computed using the generating function for unconstrained walks, and doing so yields the formula in the statement of the theorem. As an example, we show how to compute the generating function for walks from $(x,y)$ to a point in $T_{L}$ counted by length.

Let $F(t,a,b)$ be the generating function for walks starting at $(x,y)$ with walks of length $n$ ending at $(x_{1},y_{1})$ contributing $a^{x_{1}}b^{y_{1}}t^{n}$. We want to sum the coefficients where the powers $x_{1}$ and $y_{1}$ of $a$ and $b$ are both multiples of $2L+8$ or both $L+4$ more than multiples of $2L+8$. For those where both $x_{1}$ and $y_{1}$ are multiples of $2L+8$, This is achieved by setting $\alpha=e^{\frac{i\pi}{L+4}}$, and writing the sum
\[\frac{1}{(2L+8)^2}\sum_{1\leq j,k\leq 2L+7}F(t,\alpha^{j},\alpha^{k}),\]
as the contribution to this sum from a monomial $a^{x_{1}}b^{y_{1}}t^{n}$ is
\[t^n\left(\frac{1}{2L+8}\sum_{1\leq j\leq 2L+7}\alpha^{x_{1}j}\right)\left(\frac{1}{2L+8}\sum_{1\leq k\leq 2L+7}\alpha^{y_{1}k}\right),\]
which is $0$ unless $x_{1}$ and $y_{1}$ are both multiples of $2L+8$, in which case it is $t^n$. Similarly, the generating function for the cases where $x_{1}-L-4$ and $y_{1}-L-4$ are multiples of $2L+8$ is
\[\frac{1}{(2L+8)^2}\sum_{1\leq j,k\leq 2L+7}(-1)^{j+k}F(t,\alpha^{j},\alpha^{k}).\]
Similarly, one can write expressions for the generating function of walks from any given point to a point in $T_{L}$. Adding and subtracting these as appropriate yields the desired result.
\end{proof}
%\subsection{Dimension 3 (Andrew)}

%\begin{itemize}
%\item Lattice $e_i-e_{i+1},\ldots$ \\
%Chef d'\oe uvre
%\item $B,F$-symmetry
%\item Bijection with waffle in triangle
%\item Waffle vectors for the pyramid paths and nice interpretation in terms of colored rectangles that allows a geometric bijection\\
%Expression of the waffle vectors (starting from the values on one face of the pyramid)
%\item Makes the enumeration of pyramidal walks ``easy'' 
%\end{itemize}

\section{Conclusion}

To sum up, we have found several bijections between forward triangular walks and Motzkin path with bounded amplitude, answering thus  Mortimer and Prellberg's open question~\cite{MortimerPrellberg}.

There were some interesting consequences from this discovery. First, by looking for a bijection, we discovered an unexpected symmetry property between forward and backward paths (Theorem~\ref{theo:directions}). Second, we refined Mortimer and Prellberg's results by considering triangular walks starting not only at {the} origin, but at any point in the triangle (Theorem~\ref{theo:anywhere}). Finally, by mimicking the proof of the first sections, we managed to extend some of our results to larger dimensions. In particular, we discovered a new bijective correspondence in dimension 3 (Theorem~\ref{theo:waffle_to_pyramid}), enabling in the process to find an expression for the generating function of pyramid walks (Corollary~\ref{cor:enum_pyramid_walks}), which was also an open question in Mortimer and Prellberg's paper.

However, we still do not know if there exists a bijection between triangular walks in dimension $d \geq 4$ and some {class} of walks in dimension $d-1$. It seems like our two- and three-dimensional argument (more precisely, the one in the proofs of Proposition~\ref{prop:motzkin_inductive} and Theorem~\ref{theo:waffle_to_pyramid}) does not work anymore. We leave the question of Mortimer and Prellberg about the enumeration of triangular walks in higher dimension as an open question.

There is another conjecture from a different paper that may relate to this current work: the three authors of~\cite{bousquetmelouFusyRaschel} conjecture that there exists a length-preserving involution on
double-tandem walks that exchanges 
$x_{start}-x_{min}$
 and 
 $y_{end}-y_{min}$, 
 while preserving $y_{start}-y_{min}$ and $x_{end}-x_{min}$ (point $(x_{start},y_{start})$ denotes the starting point, and $x_{min}$ and $y_{min}$ are respectively the minimal x- and y-coordinates during the walk). It may be interesting to see if techniques of Section~\ref{s:symmetry} facilitate the discovery of this involution.

Finally, this paper shows two examples of bijections where there is a trade-off between domain and endpoint constraints:
\begin{itemize}
\item The one between triangular paths and Motzkin paths transform two-dimensional walks with no constraint on the endpoint into one-dimensional walks which must finish at the origin;
\item the one between pyramid paths and waffle walks transform three-dimensional walks with no constraint on the endpoint into two-dimensional walks which must end on one of the axis.
\end{itemize}
This is somehow reminiscent of \cite{Eliz15,courtiel2018bijections}. We wonder whether there are some other examples of this phenomenon, or even a generic framework for such bijections.

\bibliographystyle{plain}
\bibliography{mortimer-prellberg}

\begin{thebibliography}{10}

\bibitem{basket}
J{\'e}r{\'e}mie Bettinelli, {\'E}ric Fusy, C{\'e}cile Mailler, and Lucas
  Randazzo.
\newblock A bijective study of basketball walks.
\newblock {\em S{\'e}minaire Lotharingien de Combinatoire}, 77:B77a, 2016.

\bibitem{BoMi10}
Mireille Bousquet-M{\'e}lou and Marni Mishna.
\newblock Walks with small steps in the quarter plane.
\newblock In {\em Algorithmic probability and combinatorics}, volume 520 of
  {\em Contemp. Math.}, pages 1--39. Amer. Math. Soc., Providence, RI, 2010.

\bibitem{bousquetmelouFusyRaschel}
Mireille Bousquet-Mélou, Éric Fusy, and Kilian Raschel.
\newblock Plane bipolar orientations and quadrant walks, 2019.

\bibitem{ChyzakYeats}
Frédéric Chyzak and Karen Yeats.
\newblock Bijections between {Ł}ukasiewicz walks and generalized tandem walks.
\newblock {\em The Electronic Journal of Combinatorics}, 27(2), April 2020.
\newblock Article number P2.3. 46 pages. Implementation available at
  \url{https://arxiv.org/abs/1810.04117}.

\bibitem{courtiel2018bijections}
Julien Courtiel, Eric Fusy, Mathias Lepoutre, and Marni Mishna.
\newblock Bijections for weyl chamber walks ending on an axis, using arc
  diagrams and schnyder woods.
\newblock {\em European Journal of Combinatorics}, 69:126--142, 2018.

\bibitem{Eliz15}
Sergi Elizalde.
\newblock Bijections for pairs of non-crossing lattice paths and walks in the
  plane.
\newblock {\em European J. Combin.}, 49:25--41, 2015.

\bibitem{eu}
Sen-Peng Eu.
\newblock Skew-standard tableaux with three rows.
\newblock {\em Advances in Applied Mathematics}, 45(4):463 -- 469, 2010.

\bibitem{eu2}
Sen-Peng Eu, Tung-Shan Fu, Justin~T. Hou, and Te-Wei Hsu.
\newblock Standard young tableaux and colored motzkin paths.
\newblock {\em Journal of Combinatorial Theory, Series A}, 120(7):1786 -- 1803,
  2013.

\bibitem{yellowBook}
Guy Fayolle, Roudolf Iasnogorodski, and Vadim Malyshev.
\newblock {\em Random Walks in the Quarter Plane: Algebraic Methods, Boundary
  Value Problems, Applications to Queueing Systems and Analytic Combinatorics}.
\newblock Springer Publishing Company, Incorporated, 2nd edition, 2017.

\bibitem{gessel1992random}
Ira~M Gessel and Doron Zeilberger.
\newblock Random walk in a weyl chamber.
\newblock {\em Proceedings of the American Mathematical Society},
  115(1):27--31, 1992.

\bibitem{gouyouBeauchamps}
Dominique Gouyou-Beauchamps.
\newblock Standard young tableaux of height 4 and 5.
\newblock {\em European Journal of Combinatorics}, 10(1):69 -- 82, 1989.

\bibitem{Humphreys}
Katherine Humphreys.
\newblock A history and a survey of lattice path enumeration.
\newblock {\em Journal of Statistical Planning and Inference}, 140(8):2237 --
  2254, 2010.
\newblock Lattice Path Combinatorics and Applications.

\bibitem{mishna}
Marni Mishna.
\newblock {\em On Standard Young Tableaux of Bounded Height}, pages 281--303.
\newblock Springer International Publishing, Cham, 2019.

\bibitem{Mohanty}
Gopal Mohanty.
\newblock {\em Lattice Path Counting and Applications}.
\newblock Probability and Mathematical Statistics: A Series of Monographs and
  Textbooks. Academic Press, 1979.

\bibitem{MortimerPrellberg}
Paul~R.G. Mortimer and Thomas Prellberg.
\newblock On the number of walks in a triangular domain.
\newblock {\em The Electronic Journal of Combinatorics}, Volume 22, Issue
  1(P1.64), 2015.

\bibitem{regev}
Amitai Regev.
\newblock Asymptotic values for degrees associated with strips of young
  diagrams.
\newblock {\em Advances in Mathematics}, 41(2):115 -- 136, 1981.

\bibitem{yeats2014bijection}
Karen Yeats.
\newblock A bijection between certain quarter plane walks and motzkin paths,
  2014.

\bibitem{young}
Alfred Young.
\newblock {On Quantitative Substitutional Analysis}.
\newblock {\em Proceedings of the London Mathematical Society},
  s1-33(1):97--145, 11 1900.

\end{thebibliography}

\end{document}